\newtheorem{theorem}{Theorem}[section]
\newtheorem{lemma}[theorem]{Lemma}
\theoremstyle{remark}
\newtheorem{remark}{Remark}
\newtheorem{question}{Question}
\newtheorem{case}{Case}
\newtheorem{claim}{Claim}[theorem]
\theoremstyle{definition}
\newcommand{\bbL}{\mathbb{L}}
\newcommand{\bx}{\mathbf{x}}
\newcommand{\rx}{\mathrm{x}}
\newcommand{\rex}{\mathrm{ex}}
\newcommand{\rEX}{\mathrm{EX}}
\newcommand{\rspex}{\mathrm{spex}}
\newcommand{\rSPEX}{\mathrm{SPEX}}
\title{Spectral Tur\'an problems for intersecting even cycles}
\date{\today}
\author{Dheer Noal Desai
\thanks{Department of Mathematics and Statistics, University of Wyoming, \texttt{ddesai1@uwyo.edu}}
\thanks{Department of Mathematical Sciences, The University of Memphis, \texttt{dnddesai1@memphis.edu}}}
\begin{document}

\maketitle
%\linenumbers
\begin{abstract}
Let $C_{2k_1, 2k_2, \ldots, 2k_t}$ denote the graph obtained by intersecting $t$ distinct even cycles $C_{2k_1}, C_{2k_2}, \ldots, C_{2k_t}$ at a unique vertex. In this paper, we determine the unique graph with maximum adjacency spectral radius among all graphs on $n$ vertices that do not contain any $C_{2k_1, 2k_2, \ldots, 2k_t}$ as a subgraph, for $n$ sufficiently large. 
When one of the constituent even cycles is a $C_4$, our results improve upper bounds on the Tur\'an numbers for intersecting even cycles that follow from more general results of F\"{u}redi \cite{furedi1991turan} and Alon, Krivelevich and Sudakov \cite{alon2003turan}.
Our results may be seen as extensions of previous results for spectral Tur\' an problems on forbidden even cycles $C_{2k}, k\ge 2$ (see \cite{cioabua2022evencycle, Nikiforovpaths, zhai2020spectralhexagon, ZW}).  
\end{abstract}

\section{Introduction}
For any graph $F$, the Tur\' an number $\rex(n, F)$ is the maximum number of edges in any graph on $n$ vertices that avoids any isomorphic copy of $F$ as subgraphs. The set of $n$-vertex $F$-free graphs with $\rex(n, F)$ many edges is denoted by $\rEX(n, F)$ and called the set of \textit{extremal graphs}. For the complete graph, $K_{r+1}$, Tur\' an \cite{Turan41} proved that the Tur\' an graph $T_r(n)$ is the only extremal graph, where the Tur\' an graph is the unique complete $r$-partite graph on $n$ vertices with each part having either $\left\lfloor \frac{n}{r} \right\rfloor$ or $\left\lceil \frac{n}{r} \right\rceil$ vertices.

The celebrated Erd\H{o}s-Stone-Simonovits theorem \cite{erdos1966limit,ES46} extends Tur\' an's theorem to other graphs with chromatic number $r+1$ and gives the exact asymptotics for the Tur\' an numbers of graphs with chromatic number at least three. It states that if the chromatic number of $F$ is $r+1$, then \[\rex(n, F) = \left(1 - \dfrac{1}{r} + o(1)\right)\dfrac{n^2}{2}.\]
Consequently, when $F$ is a bipartite graph and $r=1$, we only get $\rex(n, F) = o(n^2)$ and it remains to determine the exact asymptotics for several basic bipartite graphs. For example, for bipartite graphs $K_{s, t}$ with $s \le t$, the K\H{o}vari-S\' os-Tur\' an theorem \cite{kovari1954problem} establishes the upper bound $\rex(n, K_{s, t}) = O(n^{2-1/s})$, however matching lower bounds have only been confirmed for complete bipartite graphs $K_{s, t}$ with $t \ge (s-1)! + 1$ \cite{alon1999norm, furedi1996new}. 
Another avenue for  difficult Tur\' an problems comes from even cycles. The order of magnitude for $\rex(n, C_{2k})$ is determined only for $k=2, 3$ and $5$ \cite{furedi2013history}. 

Let $C_{k_1, k_2, \ldots, k_t}$ be the graph obtained by intersecting $t$ cycles of lengths $k_i \ge 3$, for $1 \le i \le t$, at a unique vertex. If every $k_i$ is odd, we call such a graph an intersecting odd cycle. In \cite{ET} Erd\H{o}s, F\" uredi, Gould and Gunderson determined the exact number $\rex(n, F)$ and set of extremal graphs $\rEX(n, F)$ for $F = C_{3, 3, \ldots, 3}$ the friendship graph and $n$ sufficiently large. More recently, Hou, Qiu, and Liu \cite{hou2015extremal, hou2018turan} and Yuan \cite{yuan2018extremal} solved the Tur\' an problem for all other intersecting odd cycles $C_{2k_1+1, 2k_2+1, \ldots, 2k_t+1}$. 

The Tur\' an problem remains wide open for intersecting even cycles $C_{2k_1, 2k_2, \ldots, 2k_t}$.  For simplicity, assume that $k_1 \le k_2 \le \ldots k_{t-1} \le k_t$. 

F\"{u}redi \cite{furedi1991turan} and Alon, Krivelevich and Sudakov \cite{alon2003turan} generalized the K\H{o}vari-S\' os-Tur\' an theorem to bipartite graphs $H$ where one of the parts has maximum degree at most $r$. They determined $\rex(n, H) = O(n^{2-1/r})$. This upper bound follows from
Theorem 2.2 of \cite{alon2003turan} and we record it below for reference.
\begin{theorem}
\cite{alon2003turan}
\label{alonturan}
Let $H = (A \cup B, F)$ be a bipartite graph with sides $A$ and $B$ of sizes $|A| = a$ and $|B| = b$, respectively. Suppose that the degrees of all vertices $b \in B$ in $H$ do not exceed $r$. Let $G=(V, E)$ be a graph on $n$ vertices with average degree $d = 2|E(G)|/n$. If
\[\frac{d^r}{n^{r-1}} - \binom{n}{r}\left(\frac{a+b-1}{n}\right)^r > a-1,\]
then $G$ contains a copy of $H$.
\end{theorem}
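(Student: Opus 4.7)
The plan is to use the \emph{dependent random choice} technique. The target is to locate a set $U\subseteq V(G)$ of size at least $a$ such that every $r$-subset of $U$ has at least $a+b-1$ common neighbors in $G$; granting this, one can embed $A$ into $U$ and then embed $B$ greedily.

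First I would sample a uniformly random $r$-tuple $T=(t_1,\dots,t_r)\in V(G)^r$ (with repetition allowed) and set $U_0 = \bigcap_{i=1}^r N_G(t_i)$. For any $v\in V(G)$, $\Pr[v\in U_0] = (d(v)/n)^r$, so the power-mean inequality applied to the degree sequence (equivalently, Jensen's inequality for $x\mapsto x^r$) yields
\[ \mathbb{E}[|U_0|] \;=\; \sum_{v} \left(\frac{d(v)}{n}\right)^r \;\geq\; n\left(\frac{d}{n}\right)^r \;=\; \frac{d^r}{n^{r-1}}. \]
Calling an $r$-subset $S\subseteq V(G)$ \emph{bad} if $|N_G(S)|<a+b-1$ and letting $Y$ count the bad $r$-subsets contained in $U_0$, the identity $\Pr[S\subseteq U_0]=(|N_G(S)|/n)^r$ produces the matching upper bound
\[ \mathbb{E}[Y] \;\leq\; \binom{n}{r}\left(\frac{a+b-1}{n}\right)^r. \]
Linearity of expectation together with the hypothesis gives $\mathbb{E}[|U_0|-Y] > a-1$, so some outcome satisfies $|U_0|-Y\geq a$; deleting one vertex from each bad $r$-subset of $U_0$ produces the desired $U$.

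To finish, I would fix any injection $\phi:A\hookrightarrow U$ and process the vertices of $B$ in an arbitrary order. When embedding $b\in B$, extending $\phi(N_H(b))$ (of size $\leq r$) to an $r$-subset of $U$ shows that the common $G$-neighborhood $C$ of $\phi(N_H(b))$ satisfies $|C|\geq a+b-1$, while the forbidden set $F=\phi(A)\cup\phi(B_{\mathrm{prev}})$ intersects $C$ in at most $(a-|N_H(b)|)+(b-1)$ vertices because $\phi(N_H(b))$ itself is disjoint from $C$. Hence $|C\setminus F|\geq |N_H(b)|\geq 1$, giving a legal choice for $\phi(b)$ (the case $N_H(b)=\emptyset$ is trivial provided $n\geq a+b$, which the hypothesis forces). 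The only real obstacle is the expectation calculation in the middle paragraph: the lower bound on $\mathbb{E}[|U_0|]$ and the upper bound on $\mathbb{E}[Y]$ must combine to reproduce the precise hypothesis $\frac{d^r}{n^{r-1}}-\binom{n}{r}\bigl(\tfrac{a+b-1}{n}\bigr)^r > a-1$ of the theorem, which is exactly what the dependent random choice framework is engineered to deliver.
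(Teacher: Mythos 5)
The paper does not actually prove this statement --- it is recorded verbatim from Alon--Krivelevich--Sudakov as a cited tool --- and your argument is precisely the dependent random choice proof from that source, so it is correct and there is no internal proof to compare it against. The only deviations are cosmetic: you use the codegree threshold $a+b-1$ for ``good'' $r$-subsets and recover the lost unit via the observation that $\phi(N_H(b))$ is disjoint from its own common neighborhood (AKS instead demand $a+b$ common neighbors, which the same expectation bound delivers), and your extension of $\phi(N_H(b))$ to an $r$-subset of $U$ tacitly uses $|U|\ge r$, i.e.\ $r\le a$ --- the same implicit normalization as in the original, harmless since every vertex of $B$ has all its neighbors in $A$.
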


When $r=2$, this directly gives that $\mathrm{ex}(n, H) \le \dfrac{1}{2}\left(a-1 + \frac{(a+b-1)^2}{2} - O(1/n)\right)^{1/2}n^{3/2}$.

It follows from Theorem \ref{alonturan} that \begin{equation}
\label{AKS bounds}
\rex(n, C_{2k_1, 2k_2, \ldots, 2k_t}) \le \dfrac{1}{2}\left(\kappa + \frac{(2\kappa + t)^2}{2} - O(1/n)\right)^{1/2}n^{3/2},   
\end{equation}where $\kappa=\sum_{i=1}^t (k_i - 1)$.

When $r=2$, Conlon and Lee \cite{conlon2021extremal} further showed that $\rex(n, H) = cn^{3/2}$, where $c$ is some constant, only if $H$ contains a $C_4$.
Thereafter,
Conlon, Janzer and Lee \cite{conlon2021more} further strengthened Theorem \ref{alonturan} as follows.

\begin{theorem}
\cite{conlon2021more}
\label{bounds on turan number for max degree r no C4}
Let $H$ be a  bipartite graph such that in one of the parts all the degrees are at most $r$ and $H$ does not contain $C_4$ as a subgraph. Then  
$\mathrm{ex}(n, H) = o(n^{2 - 1/r}).$
\end{theorem}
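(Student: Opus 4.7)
The plan is to argue by contradiction. Fix $\varepsilon > 0$ and suppose there is a sequence of $H$-free graphs $G$ on $n$ vertices with $e(G) \ge \varepsilon\, n^{2-1/r}$; failure of the conclusion in fact yields, after passing to a subsequence, $e(G)/n^{2-1/r} \to \infty$. First I would run the standard regularization step, iteratively deleting vertices of degree below half the running average, to reduce to a subgraph on $\Theta(n)$ vertices with minimum degree $\delta = \omega(n^{1-1/r})$; the divergent factor above is precisely what makes this improved minimum degree possible.

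The central tool would be dependent random choice, adapted to exploit the $C_4$-free hypothesis on $H$. Sampling a random $r$-tuple $T=(t_1,\dots,t_r)$ from $V(G)^r$ and setting $U = N(t_1)\cap \cdots \cap N(t_r)$, convexity gives
\[
\mathbb{E}|U|\;\ge\;\frac{1}{n^{r-1}}\left(\frac{2e(G)}{n}\right)^{\!r}\;=\;\omega(1).
\]
A second-moment trim then produces a set $U$ of size $\omega(1)$ in which almost every $r$-subset $S\subseteq U$ satisfies $|N(S)| \ge |V(H)|$. Without any further input this only reproves the $O(n^{2-1/r})$ bound of Theorem~\ref{alonturan}, so the promised $o$-improvement must come from a structural restriction on $U$ driven by the $C_4$-free hypothesis.

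That structural restriction is the following. If one attempts to embed the bounded-degree side $B$ of $H$ greedily into $U$, placing each $b \in B$ in the common neighborhood of the (at most $r$) already-embedded images of $N_H(b)$, then the only obstructions come from pairs $u,u' \in U$ whose codegree in $G$ is at least $2$. Because $H$ is $C_4$-free, distinct vertices of $B$ share at most one neighbor in $A$, so such dangerous pairs are needed at most once each across the entire embedding. I would quantify this via a weighted Sidon-type count within $U$, discard the vertices lying in too many high-codegree pairs, and then complete the greedy embedding on what remains, using the $|V(H)|$-slack above to avoid the constantly-many previously used vertices.

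The main obstacle is converting the qualitative gain ``most pairs in $U$ have codegree $1$'' into a genuinely super-constant saving over the $O(n^{2-1/r})$ benchmark. Ordinary dependent random choice is tight at $d = n^{1-1/r}$, so the argument cannot rest on a single application; I would expect either to iterate the trim on the set of surviving tuples, or to replace the second-moment step with an entropy-compression or Janson-type inequality that compounds the codegree saving across all $\binom{|U|}{2}$ pairs. Secondary technicalities---controlling the regularization error, handling degenerate random $r$-tuples with repeated coordinates, and verifying that the greedy embedding can avoid previously used vertices at each step---are routine by comparison.
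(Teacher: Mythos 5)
There is a genuine gap, and it sits exactly where the content of the theorem lies. Note first that the paper does not prove this statement at all: it is quoted from Conlon, Janzer and Lee \cite{conlon2021more} as background, so there is no in-paper argument to compare with; your proposal has to stand on its own. As written, it does not. The regularization step plus dependent random choice, as you set it up, only recovers the bound $\mathrm{ex}(n,H)=O(n^{2-1/r})$ of Theorem~\ref{alonturan}, and you acknowledge this yourself. The entire point of the theorem is the improvement from $O$ to $o$, and the mechanism for extracting that improvement from the $C_4$-freeness of $H$ is never supplied: you defer it to ``iterate the trim,'' ``entropy compression,'' or ``a Janson-type inequality,'' which is precisely the missing idea, not a routine technicality. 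Since dependent random choice is tight at the exponent $2-1/r$ (as you note), no amount of bookkeeping around a single application of it can close this; the actual proof in \cite{conlon2021more} runs a genuinely different and more delicate counting argument over walks/$r$-tuples rather than a one-shot embedding out of a common-neighborhood set.

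There is also a concrete flaw in the structural step you do describe. In the greedy embedding, each $b\in B$ must be placed in the common neighborhood in $G$ of the images of $N_H(b)$, which is a set of up to $r$ vertices of $U$; the obstruction is therefore $r$-subsets of $U$ whose common neighborhood in $G$ is too small, not pairs $u,u'\in U$ of codegree at least $2$. The $C_4$-freeness hypothesis says that two vertices of $B$ share at most one common neighbor in $A$ (a condition on $H$), and this does not translate into your claim that high-codegree pairs in $G$ ``are needed at most once each across the entire embedding''; the proposed weighted Sidon-type count is not a worked-out substitute for that translation. So the proposal should be regarded as a plausible opening move plus an honest statement of where the real difficulty is, rather than a proof.
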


More recently, Sudakov and Tomon \cite{sudakov2020turan} strengthened the result as follows. 

\begin{theorem}
\cite{sudakov2020turan}
\label{No C4 bounds on turan number for max degree r}
Let $t \ge 2$ be an integer. Let $H$ be a $K_{t, t}$-free  bipartite graph such that every vertex in one of the parts of $H$ has degree at most $t$. Then $\rex(n,H) = O(n^{2 - 1/t}).$
\end{theorem}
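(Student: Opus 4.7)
The plan is to apply the dependent random choice (DRC) embedding technique to force a copy of $H$ in any sufficiently dense host. Write $H = (A \cup B, F)$ with every $v \in B$ satisfying $\deg_H(v) \le t$, and set $a = |A|$, $b = |B|$. Suppose, for contradiction, that $G$ is an $n$-vertex $H$-free graph with $|E(G)| \ge C n^{2-1/t}$, where $C = C(a, b, t)$ is a large constant to be fixed later; the task is to embed $H$ into $G$.

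I would first produce a ``rich'' set $U \subseteq V(G)$ via DRC. Sample $t$ vertices $u_1, \dots, u_t$ uniformly and independently from $V(G)$, and set $X = N(u_1) \cap \cdots \cap N(u_t)$. Jensen's inequality gives
\[
\mathbb{E}[|X|] = \sum_{w \in V(G)} \left(\frac{d(w)}{n}\right)^t \ge n \left(\frac{2|E(G)|}{n^2}\right)^t \ge (2C)^t.
\]
Call a $t$-subset $S \subseteq V(G)$ \emph{bad} if $|N(S)| < a + b$; the expected number of bad $t$-subsets contained in $X$ is at most $\binom{n}{t}\left(\frac{a+b}{n}\right)^t \le (a+b)^t$. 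Choosing $C$ large enough and deleting one vertex per bad $t$-subset from a good outcome of $X$ yields a set $U$ with $|U| \ge a$ such that every subset of $U$ of size at most $t$ has common neighborhood of size at least $a + b$ in $G$ (by monotonicity of common neighborhoods under inclusion).

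Next I would embed $H$ greedily. Fix any injection $\phi \colon A \to U$. Processing the vertices of $B$ in arbitrary order, for each $v \in B$ with $N_H(v) = \{x_1, \dots, x_s\}$ (where $s \le t$), the image $\phi(N_H(v)) \subseteq U$ has at least $a + b$ common neighbors in $G$; since at most $a + b - 1$ vertices of $G$ have been used so far, there is an unused common neighbor available to serve as $\phi(v)$. This produces a copy of $H$ in $G$, contradicting $H$-freeness, and establishes $\rex(n, H) \le C n^{2 - 1/t}$.

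The DRC framework above is standard and already delivers the stated bound using only the degree hypothesis. The main obstacle I anticipate is pinning down exactly where the $K_{t,t}$-free hypothesis earns its keep: the bound $O(n^{2-1/t})$ already follows from the degree-$t$ assumption via Theorem~\ref{alonturan}, so the $K_{t,t}$-free hypothesis is presumably sharpening the constant $C$, or is there to enable a refined DRC step in which the ``richness'' threshold $a + b$ is replaced by a quantity depending only on $t$ (exploiting that at most $t-1$ vertices of $B$ can share the same $t$ neighbors in $A$). Carrying out this refined bookkeeping, and confirming that the extra hypothesis is genuinely used rather than redundant, is the step I expect to be the most delicate.
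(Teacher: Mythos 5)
The paper itself offers no proof of Theorem \ref{No C4 bounds on turan number for max degree r}: it is quoted verbatim from Sudakov and Tomon, so there is no internal argument to compare yours against. Judged on its own terms, your dependent random choice argument does establish the statement as literally printed: it is essentially the standard proof of Theorem \ref{alonturan} specialized to $r=t$, and it goes through after two routine repairs. You need $|U|\ge\max\{a,t\}$ rather than just $|U|\ge a$, so that a set $\phi(N_H(v))$ of size $s<t$ can be extended inside $U$ to a $t$-set before invoking monotonicity of common neighborhoods; and you should pass explicitly from the expectation bound $\mathbb{E}\bigl[|X|-\#\{\text{bad }t\text{-sets in }X\}\bigr]\ge (2C)^t-(a+b)^t$ to a fixed good outcome before deleting vertices. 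Both are standard.

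The substantive issue is the one you raise in your final paragraph but leave unresolved: your proof never uses the $K_{t,t}$-free hypothesis, and for the big-$O$ bound that hypothesis is indeed redundant --- the statement as printed is exactly what Theorem \ref{alonturan} (equivalently Lemma \ref{extremal containing C4}) already gives. The theorem of Sudakov and Tomon that the paper intends to quote (it is introduced as a strengthening of Theorem \ref{bounds on turan number for max degree r no C4}) is the stronger bound $\rex(n,H)=o(n^{2-1/t})$, and there the $K_{t,t}$-free condition is essential: for instance $H=K_{t,s}$ with $s\ge (t-1)!+1$ has all degrees at most $t$ in one part, contains $K_{t,t}$, and satisfies $\rex(n,K_{t,s})=\Theta(n^{2-1/t})$. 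No tuning of constants or refined bookkeeping in a plain dependent random choice embedding can produce a $o(n^{2-1/t})$ bound, so your proposed refinement would not recover the actual cited theorem, whose proof is a genuinely different and more delicate argument. In short: your argument is correct for the statement as printed (which appears to carry a typo, $O$ in place of $o$), but it proves only the part that was already covered by Theorem \ref{alonturan}, and the hypothesis you suspected might be redundant is redundant only for that weaker reading.
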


In fact, if we only consider intersecting even cycles, then an earlier work of Faudree and Simonovits (see Theorem 2 of \cite{faudree1983class}) indirectly gives $\rex(n, C_{2k_1, 2k_2, \ldots, 2k_t}) = O(n^{1+1/k_1})$. This strengthens  \eqref{AKS bounds} when $k_1 \ge 3$.  

In this paper, we study a spectral version of the Tur\' an problem for intersecting even cycles. Analogous to the Tur\' an problem, let $\rspex(n, F)$ denote the maximum spectral radius of the adjacency matrix of any $F$-free graph on $n$ vertices, and $\rSPEX(n, F)$ denote the set of \textit{spectral extremal graphs} on $n$ vertices with adjacency spectral radius equal to $\rspex(n, F)$ and having no isomorphic copies of $F$ as subgraphs. 
Nikiforov \cite{Nikiforovpaths} pioneered the systematic study of spectral Tur\' an problems, although several sporadic results appeared earlier. In \cite{nikiforov2007bounds} Nikiforov proved that the only spectral extremal graph for $K_{r+1}$ is the Tur\' an graph $T_r(n)$. When combined with the observation that the average degree of a graph lower bounds the spectral radius of its adjacency matrix, Nikiforov's result implies and strengthens Tur\' an's theorem for complete graphs. Additionally, Nikiforov \cite{nikiforov2010contribution}, Babai and Guiduli \cite{babai2009spectral} proved spectral versions of the K\H{o}vari-S\' os-Tur\' an theorem for complete bipartite graphs $K_{s, t}$. Moreover, using the average degree bound these match the best known upper bounds for $\rex(n, K_{s, t})$, obtained by F\" uredi \cite{furedi1996upper}.  
Recently, several papers have been published determining $\rspex(n, F)$ for various families of graphs (see \cite{cioabua2022spectralerdossos, cioabua2022spectral, lu2006new, nikiforov2008spectral, shi2007upper, wilf1986spectral, yuan2012spectral, ZW, zhai2020spectral}). In fact, our proof further develops techniques that appear in \cite{cioabua2022spectralerdossos, cioabua2022evencycle}.
Spectral Tur\' an problems fit into a broader framework of problems called {\em Brualdi-Solheid problems} \cite{BS} that investigate the maximum spectral radius among all graphs belonging to a specified family of graphs. Several results are known in this area (see \cite{BZ, BLL, EZ, FN, nosal1970eigenvalues, S, SAH}). 

As mentioned earlier, solving Tur\'an problems for even cycles has proven to be a difficult task. 
Nikiforov conjectured the spectral extremal graphs for even cycles in \cite{ Nikiforovpaths}. Recently, the conjectures were proved and $\rSPEX(n, C_{2k})$ were determined for all values of $k$ when $n$ is sufficiently large. The case $k=2$ was covered in \cite{Nikiforovpaths,ZW}, the case $k=3$ was covered in \cite{zhai2020spectralhexagon}, and all other values of $k$ were covered in \cite{cioabua2022evencycle}. 

In \cite{cioabua2020maximum} Cioab\u a, Feng, Tait and Zhang solved the spectral Tur\' an problem for friendship graphs $C_{3, 3, \ldots, 3}$. Further, Li and Peng \cite{li2022spectral} solved the spectral extremal graphs for all other intersecting odd cycles $C_{2k_1+1, 2k_2+1, \ldots, 2k_t+1}$.  In \cite{intersectingcliques}, the authors generalized the results of \cite{cioabua2020maximum} to intersecting cliques. 

Let $M_k := \left\lfloor\frac{k}{2}\right\rfloor K_2 \cup \gamma K_1$, where $\gamma = k \pmod{2}$, denote a maximal matching on $k$ vertices.   
Let $S_{n,k} := K_k \vee (n-k) K_1$, denote the join of a clique on $k$ vertices and an independent set on $n-k$ vertices,  and $S_{n, k}^+$ be the graph obtained by adding an edge to the independent set of $S_{n,k}.$ Finally, let $F_{n,k} := K_k \vee M_{n-k} = K_k \vee \left(\left\lfloor\dfrac{n-k}{2}\right\rfloor K_2 \cup \gamma K_1\right)$ with $\gamma = (n-k) \pmod{2}$, be the graph obtained from $S_{n,k}$ by adding a maximal matching in the independent set of $S_{n,k}$.
For an integer $k$, let $k':= k - 1$. We prove the following theorems for intersecting even cycles.

\begin{theorem}
\label{theorem intersecting even cycles}
Let $\max\{k_1, k_2, \ldots, k_t\} \ge 3$ and $\kappa := \sum_{i=1}^t k_i'$. If $G$ is a graph of sufficiently large order $n$, with
$\lambda(G) \ge \lambda(S_{n, \kappa}^+)$ then $G$ contains $C_{2k_1, 2k_2, \ldots, 2k_t}$ unless $G = S_{n, \kappa}^+$.

\end{theorem}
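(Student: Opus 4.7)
The plan is to follow the framework of Cioab\u a, Desai and Tait from \cite{cioabua2022evencycle, cioabua2022spectralerdossos} and adapt it to the intersecting setting. First I would verify that $S_{n, \kappa}^+$ is indeed $C_{2k_1, 2k_2, \ldots, 2k_t}$-free: any family of $t$ internally disjoint $2k_i$-cycles through a common vertex of $K_\kappa$ must alternate between $K_\kappa$ and the independent side and therefore consume $\sum (k_i - 1) = \kappa$ further vertices of $K_\kappa$, one too many; the extra edge in the independent set cannot repair this count since it only allows odd-length deviations from alternation. Standard eigenvector estimates on joins then yield
\[
\lambda(S_{n, \kappa}^+) = (1 + o(1)) \sqrt{\kappa n}.
\]

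Let $G$ be an $n$-vertex $C_{2k_1, \ldots, 2k_t}$-free graph with $\lambda(G) \ge \lambda(S_{n, \kappa}^+)$, let $\bx$ denote its positive unit Perron eigenvector normalized by $\max_v x_v = 1$, and let $z$ be a vertex where this maximum is attained. The Faudree-Simonovits theorem from \cite{faudree1983class} supplies the global edge bound $e(G) = O(n^{1 + 1/k_1})$, which I would feed into the two-step eigenvalue identity
\[
\lambda^2 x_v = \sum_{u \sim v} \sum_{w \sim u} x_w
\]
to upper-bound small eigenvector entries in terms of local degrees. Define the set of ``heavy'' vertices $L := \{ v \in V(G) : x_v \ge \epsilon \}$ for a suitable positive $\epsilon$. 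A first degree-sum argument bounds $|L|$ by a constant depending only on $\kappa$. A rooted embedding argument then refines this to $|L| \le \kappa$: if strictly more than $\kappa$ heavy vertices were present, one could route $t$ internally disjoint cycles of prescribed lengths through $z$, placing heavy vertices at the ``corners'' of the cycles and drawing the remaining interior vertices from the large common neighborhood of the heavy vertices. Combined with the spectral lower bound $\lambda(G) \ge (1+o(1))\sqrt{\kappa n}$, this forces $|L| = \kappa$ exactly.

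The remaining phase identifies $G$. A sequence of rerouting arguments will show that $L$ spans a clique, every vertex of $V(G) \setminus L$ is adjacent to every vertex of $L$, and the induced subgraph on $V(G) \setminus L$ consists of exactly one edge plus isolated vertices. Each of (i)-(iii) is enforced by the fact that any deviation from this structure, combined with the abundance of common neighbors guaranteed by $|L| = \kappa$ and $e(G) = O(n^{1+1/k_1})$, allows one to embed $C_{2k_1, \ldots, 2k_t}$ rooted at a suitable vertex of $L$. The presence of at least one edge inside $V(G) \setminus L$ follows because $\lambda(S_{n, \kappa}) < \lambda(S_{n, \kappa}^+) \le \lambda(G)$, so the clique-plus-independent-set skeleton alone cannot account for the spectral radius.

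The principal obstacle will be the embedding steps, both for the bound $|L| \le \kappa$ and for the rerouting arguments that pin down the structure outside $L$. Compared with the single even cycle case of \cite{cioabua2022evencycle}, the present setting requires simultaneously constructing $t$ internally disjoint even cycles of prescribed lengths through a common vertex, while keeping their interiors disjoint inside a shared common neighborhood. I anticipate building the $t$ cycles sequentially in order of increasing length, deleting the interior vertices used at each stage and exploiting the fact that the common neighborhood of any $\kappa$ heavy vertices remains of size $(1 - o(1))n$ even after removing a constant number of vertices. Controlling the accumulated deletions tightly enough to leave room for the longest constituent cycle $C_{2k_t}$, particularly when $k_1 = 2$ and the edge bound from \cite{faudree1983class} is at its weakest, is where the main technical work is expected to lie.
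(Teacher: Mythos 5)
Your overall framework is the right one (it is essentially the paper's: heavy Perron-weight vertices, embedding of $C_{2k_1,\ldots,2k_t}$ via $K_{\kappa+1,\kappa+t}$ inside large common neighborhoods, then identification of the host structure), but two of your key steps do not work as stated. First, the passage from ``$L=\{v: x_v\ge\epsilon\}$'' to ``$|L|$ is a constant, and then $|L|\le\kappa$'' is unsupported. With the best available edge bound $e(G)=O(n^{3/2})$ (and when $k_1=2$, which your theorem allows, nothing better is available), a vertex with $x_v\ge\epsilon$ is only guaranteed degree $\lambda x_v=\Omega(\sqrt{n})$, so a degree-sum argument gives $|L|=O(n)$, not $O(1)$; and $\kappa+1$ vertices of degree $\Theta(\sqrt n)$ need not have any common neighbors, so ``drawing the remaining interior vertices from the large common neighborhood of the heavy vertices'' is circular at this stage. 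The paper has to earn this in stages: a sum-of-degree-squares bound for $C_{2k_1,\ldots,2k_t}$-free graphs (Theorem \ref{theorem degree squares}), the bound $|L|,|M|=O(n^{1-\delta})$ of Lemma \ref{lemma:LM} --- whose proof in the $\Theta(n^{3/2})$ case (a constituent $C_4$) is the single most delicate argument in the paper, an iterative weight-threshold refinement --- then linear degrees for $L$-vertices (Lemma \ref{degrees of vertices in L}), then the upgrade to eigenweight $\ge 1-\tfrac{1}{16\kappa^3}$ and degree $\ge(1-\tfrac{1}{8\kappa^3})n$ for the $\eta$-heavy set $L'$ and only then $|L'|=\kappa$ (Lemmas \ref{mindegrees for vertices in L'}--\ref{precise size of L}). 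You flag the $k_1=2$ difficulty at the end, but the proposal contains no mechanism to bridge it, and your cycle-routing proof of $|L|\le\kappa$ presupposes exactly the near-spanning degrees it is supposed to establish.

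Second, your endgame attributes the wrong mechanism to two of the three structural conclusions. A missing edge inside $L$, or a vertex outside $L$ missing a neighbor in $L$, does not create a copy of $C_{2k_1,\ldots,2k_t}$ (delete such an edge from $S_{n,\kappa}^+$ and the graph is still $C_{2k_1,\ldots,2k_t}$-free), so ``any deviation from this structure \ldots allows one to embed $C_{2k_1,\ldots,2k_t}$'' is false for (i) and (ii); these must instead be forced by spectral extremality. The paper does this with a Rayleigh-quotient rotation argument giving $\sum_{w\sim v}x_w\ge\kappa-\tfrac{1}{16\kappa^2}$ for every vertex (Lemma \ref{minimum eigenweight in the neighborhood of a vertex}), then kills the exceptional set $\mathcal{E}$ of vertices not joined to all of $L'$ by showing $G[\mathcal{E}]$ would otherwise have spectral radius $\ge\tfrac{4}{5\kappa}\lambda$, contradicting the upper bound of Lemma \ref{bounds on lambda} on $C_{2k_1,\ldots,2k_t}$-free graphs (Lemma \ref{E empty set}); the clique on $L'$ is then obtained purely by maximality, not freeness. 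Only your item (iii) is genuinely an embedding statement: two independent edges (or a $P_3$) in the common neighborhood of $L'$ create the forbidden graph via $K^m_{\kappa,\kappa+t+1}$, $K^p_{\kappa,\kappa+t+1}\supset C_{2k_1,\ldots,2k_t}$ (Lemma \ref{intersecting even cycles in almost bipartite graphs}), and your use of $\lambda(S_{n,\kappa})<\lambda(S_{n,\kappa}^+)$ to force at least one such edge is correct. So the skeleton is right, but the two load-bearing arguments --- the reduction to a bounded, then exactly-$\kappa$-sized set of dominating vertices, and the elimination of non-dominated vertices --- are missing or argued by a method that cannot succeed.
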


In Theorem \ref{theorem intersecting even cycles}, we are assuming that there is at least one even cycle with more than $6$ vertices. In case all the even cycles have four vertices, then we have the following theorem.

\begin{theorem}
\label{theorem intersecting 4-cycles}
      If $G$ is a graph of sufficiently large order $n$, with $\lambda(G) \ge \lambda(F_{n, t})$, then $G$ contains $C_{4,4, \ldots, 4}$ unless $G=F_{n, t}$, where $C_{4,4, \ldots, 4}$ denotes the intersecting even cycle consisting of $t$ cycles of length four that intersect at a unique vertex.
\end{theorem}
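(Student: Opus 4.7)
The plan is to follow the spectral Tur\'an strategy used in \cite{cioabua2022evencycle, intersectingcliques, cioabua2020maximum}: use the Perron eigenvector of $G$ to extract an almost-complete join structure, then exploit the $C_{4,\ldots,4}$-free hypothesis combinatorially to pin down $G$ exactly. First, I would estimate $\lambda(F_{n,t})$ via the equitable partition of $F_{n,t}$ into the clique $K_t$ and the matching $M_{n-t}$; the resulting $2 \times 2$ quotient matrix yields
\[
\lambda(F_{n,t}) = \tfrac{1}{2}\bigl(t + \sqrt{(t-2)^2 + 4t(n-t)}\bigr) = (1+o(1))\sqrt{tn},
\]
providing a quantitative lower bound on $\lambda := \lambda(G)$. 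Let $\mathbf{x}$ be the Perron eigenvector of $G$ normalized so that $\max_v x_v = x_{u^*} = 1$. The eigen-equation $\lambda \mathbf{x} = A(G)\mathbf{x}$ applied at $u^*$ gives $d(u^*) \ge \lambda$, while walk-count estimates via $\lambda^2 x_{u^*} = \sum_{v \sim u^*} \sum_{w \sim v} x_w$ force many vertices to have eigenvector entry close to $1$.

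Second, I would isolate a set $L \subseteq V(G)$ of vertices with $x_v \ge 1 - \epsilon_n$ (with $\epsilon_n \to 0$) and establish that $|L| = t$, that $L$ induces a clique, and that every vertex of $L$ is adjacent to all but $o(n)$ vertices of $V \setminus L$. The upper bound $|L| \le t$ is extracted from the $C_{4,\ldots,4}$-free condition: if there were $t+1$ vertices with high eigenvector entry, their large pairwise codegrees would allow one to greedily carve out $t$ vertex-disjoint $C_4$'s through any one of them. The matching lower bound $|L| \ge t$ follows from comparing the Rayleigh quotient of $G$ to that of $F_{n,t}$.

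The combinatorial core is Step 3: I examine $H := G[V \setminus L]$ and claim $H$ is $P_3$-free. Suppose $a - b - c$ is a path in $H$, and fix any $v \in L$. By Step 2, $v$ is adjacent to both $a$ and $c$, so $v - a - b - c - v$ is a $C_4$. For each of the other $t-1$ vertices $u_i \in L \setminus \{v\}$, I greedily choose a fresh pair $x_i, y_i \in V \setminus L$ disjoint from all previously used vertices; since both $u_i$ and $v$ are adjacent to all but $o(n)$ vertices of $V \setminus L$ and $n$ is large, $v - x_i - u_i - y_i - v$ is a $C_4$. These $t$ four-cycles intersect only at $v$ and together form a $C_{4,\ldots,4}$ at $v$, contradicting the hypothesis. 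Hence $H$ is $P_3$-free, i.e., a disjoint union of edges and isolated vertices. A final perturbation argument then removes the $o(n)$ exceptions from Step 2 (each missing $L$-to-$(V\setminus L)$ edge and each unmatched pair of isolated vertices in $H$ can be added without creating $C_{4,\ldots,4}$, strictly increasing $\lambda$), embedding $G$ as a subgraph of $F_{n,t}$; strict Perron--Frobenius monotonicity then forces $G = F_{n,t}$.

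The main obstacle is Step 2: pinning $|L|$ down to exactly $t$ and establishing the join structure up to $o(n)$ exceptions. Naive eigenvector accounting only yields $|L| \le t + O(1)$, and ruling out the boundary cases requires coupling the spectral lower bound with a careful structural dichotomy driven by the $C_{4,\ldots,4}$-free hypothesis; this is also where the assumption that every constituent cycle has length exactly $4$ (rather than some longer even cycle as in Theorem \ref{theorem intersecting even cycles}) changes the extremal structure from $S^+_{n,\kappa}$ to $F_{n,t}$. Once Step 2 is in hand, the $P_3$-extraction of Step 3 is a clean combinatorial punchline and the closing maximality argument is routine.
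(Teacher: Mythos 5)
Your overall route is the same as the paper's: normalize the Perron vector, extract a set of $t$ heavy vertices joined to almost everything, observe that a $P_3$ in the remainder together with the $t$ heavy vertices and fresh common neighbors yields a $C_{4,\ldots,4}$ centered at a heavy vertex (this is exactly the paper's Lemma \ref{intersecting even cycles in almost bipartite graphs}, $C_{4,\ldots,4}\subset K^{p}_{t,2t+1}$), and finish by maximality/Perron monotonicity. So Steps 1, 3 and the closing step are fine in outline. The problem is that the entire content of the theorem sits in your Step 2, which you state as a goal and explicitly defer ("the main obstacle"), and in the cleanup of the $o(n)$ exceptional vertices, where your proposed argument would not work as stated.

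Concretely: (i) Even the first bound on the number of heavy vertices is delicate here, because for intersecting $4$-cycles $\rex(n,C_{4,\ldots,4})=\Theta(n^{3/2})$, so the crude estimate $|L|\sqrt{tn}\,\alpha\le 2\rex(n,\cdot)$ does not give $|L|=o(n)/\alpha$ with room to spare; the paper needs the iterative threshold argument of Lemma \ref{lemma:LM} (the sets $\bbL^{\gamma}$, peeling off $\sigma=\alpha/4\kappa$ at a time and re-finding $t$ disjoint paths after deleting used vertices) precisely in this case. Getting from "heavy" to $|L'|=t$, entries $\ge 1-\frac{1}{16\kappa^3}$ and degrees $\ge (1-\frac{1}{8\kappa^3})n$ occupies Lemmas \ref{degrees of vertices in L}--\ref{precise size of L}, none of which is sketched in your proposal. (ii) Your final "perturbation argument" — add each missing $L$-to-$(V\setminus L)$ edge without creating $C_{4,\ldots,4}$ — is not justified and is not what the paper does: adding a single such edge to an exceptional vertex can in principle complete a forbidden copy, and the statement does not even yield a contradiction unless $G$ has been reduced to the spectral-extremal setting and the modified graph is shown to remain $C_{4,\ldots,4}$-free. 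The paper instead (a) rewires an exceptional vertex $v$ completely onto $L'$ and argues no new copy arises because $v$ would have all its neighbors in $L'$ and the huge common neighborhood $R$ lets one replace $v$ inside any alleged copy (Lemma \ref{minimum eigenweight in the neighborhood of a vertex}), and then (b) kills the exceptional set outright by showing $G[\mathcal{E}]$ would have spectral radius $\ge \frac{4}{5\kappa}\lambda$, contradicting the upper bound of Lemma \ref{bounds on lambda} applied to $G[\mathcal{E}]$ (Lemma \ref{E empty set}). Without (i) and a correct version of (ii), your Step 3 cannot even be applied to all of $V\setminus L$ (you need $a,c$ adjacent to every vertex of $L$, i.e.\ $\mathcal{E}=\emptyset$), so the proposal as written has a genuine gap at its core rather than a routine omission.
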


A consequence of Theorems \ref{theorem intersecting even cycles} and \ref{theorem intersecting 4-cycles} is that $\rex(n, C_{2k_1, 2k_2, \ldots, 2k_t}) \le \frac{1}{2}(\kappa + o(1))n^{3/2}$. This improves the bounds in  \eqref{AKS bounds} and is not covered by Theorem \ref{bounds on turan number for max degree r no C4} if $k_i = 2$ for some $i$, since one of the even cycles is a $4$-cycle.

Interestingly, very recently, Fang, Zhai and Lin \cite{longfei2023spectral} found similar spectral extremal graphs for disjoint even cycles and showed that $\rSPEX(n, tC_{2k}) = S_{n, lt-1}^+$ for $k \ge 3$, and  $\rSPEX(n, tC_{4}) = F_{n, 2t-1}$. We note that Theorems  \ref{theorem intersecting even cycles} and \ref{theorem intersecting 4-cycles} along with the results from \cite{longfei2023spectral} may be viewed as extensions of the spectral Tur\' an theorem on even cycles proved in \cite{cioabua2022evencycle, Nikiforovpaths, zhai2020spectralhexagon, ZW}. 

We end this section with some results that either follow from Theorems  \ref{theorem intersecting even cycles} and \ref{theorem intersecting 4-cycles}, or their proofs are almost identical to those given for Theorems \ref{theorem intersecting even cycles} and \ref{theorem intersecting 4-cycles}. 

\begin{remark}
\label{remark one on intersecting cycles and paths}
Observe that the intersecting even cycles $C_{2k_1, 2k_2, \ldots. 2k_t}$ also contains other bipartite graphs $H$ that are not contained in $S_{n, \kappa}^+$ or $F_{n,t}$ (depending on whether or not $H$ is a subgraph of an intersecting even cycle where at least one of the even cycles has more than four vertices).  This implies that for $n$ sufficiently large, $\rSPEX(n, C_{2k_1, 2k_2, \ldots. 2k_t}) = \rSPEX(n, H) \in \{\{S_{n, \kappa}^+\}, \{F_{n, t}\}\}$. Note that the size of the smallest color class of $H$ must also be $\kappa + 1$ (or $t+1$), for it to not be a subgraph of $S_{n, \kappa}^+$ (or $F_{n, t}$). 
\end{remark}

As an application of our observations in Remark \ref{remark one on intersecting cycles and paths}, we determine the spectral extremal graphs for the following families of graphs.
Let $\mathcal{CP}_{2k_1, \ldots, 2k_{t_1} ; 2p_1, \ldots, 2p_{t_2}}$ be the graph obtained by intersecting $t_1$ even cycles of lengths $2k_i \ge 4$, for $ i \le t_1$, and $t_2$ even paths on $2p_i$ vertices where $2p_i\ge 4$, for $i \le t_2$ at a unique vertex. Equivalently, we can obtain $\mathcal{CP}_{2k_1, \ldots, 2k_{t_1} ; 2p_1, \ldots, 2p_{t_2}}$ from $ C_{2k_1, \ldots, 2k_{t_1}, 2p_1, \ldots, 2p_{t_2}}$ by deleting one edge from each of the constituent $C_{2p_i}$, where the deleted edge was adjacent to the central vertex, for all $1 \le i \le t_2$.  
Clearly, $\mathcal{CP}_{2k_1, \ldots, 2k_{t_1} ; 2p_1, \ldots, 2p_{t_2}} \subset C_{2k_1, \ldots, 2k_{t_1}, 2p_1, \ldots, 2p_{t_2}}$. Then the following two results for intersecting even cycles and paths directly follow from Theorems \ref{theorem intersecting even cycles} and \ref{theorem intersecting 4-cycles}.
\begin{theorem}
\label{theorem intersecting even cycles and paths}
Let $\max\{k_1, k_2, \ldots, k_{t_1}, p_1, p_2, \ldots, p_{t_2}\} \ge 3$ where either $t_1 \ge 1$ or $t_2 \ge 2$, and \newline $\kappa: = \left(\sum_{i=1}^{t_1} k_i \right) + \left(\sum_{i=1}^{t_2} p_i\right) - (t_1 + t_2)$. If $G$ is a graph of sufficiently large order $n$ and
$\lambda(G) \ge \lambda(S_{n, \kappa}^{+})$ then $G$ contains $\mathcal{CP}_{2k_1, \ldots, 2k_{t_1} ; 2p_1, \ldots, 2p_{t_2}}$ unless $G = S_{n, \kappa}^{+}$.
\end{theorem}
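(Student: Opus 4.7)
The plan is to reduce Theorem \ref{theorem intersecting even cycles and paths} to Theorem \ref{theorem intersecting even cycles} via the mechanism of Remark \ref{remark one on intersecting cycles and paths}. By construction $\mathcal{CP}_{2k_1,\ldots,2k_{t_1};2p_1,\ldots,2p_{t_2}}$ is a subgraph of the intersecting even cycle $C_{2k_1,\ldots,2k_{t_1},2p_1,\ldots,2p_{t_2}}$, obtained from the latter by deleting one center-incident edge from each of the $t_2$ path-forming constituents. The parameter $\kappa$ in Theorem \ref{theorem intersecting even cycles and paths} equals $\sum(k_i-1)+\sum(p_i-1)$, which is exactly the parameter $\kappa$ appearing when Theorem \ref{theorem intersecting even cycles} is applied to $C_{2k_1,\ldots,2k_{t_1},2p_1,\ldots,2p_{t_2}}$, and the hypothesis $\max\{k_i,p_j\}\ge 3$ is exactly what Theorem \ref{theorem intersecting even cycles} requires. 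Therefore, once I verify that $S_{n,\kappa}^{+}$ itself does not contain $\mathcal{CP}$, the theorem follows at once: for any $G$ with $\lambda(G)\ge\lambda(S_{n,\kappa}^{+})$, Theorem \ref{theorem intersecting even cycles} gives either that $G$ contains $C_{2k_1,\ldots,2k_{t_1},2p_1,\ldots,2p_{t_2}}$ (and hence $\mathcal{CP}$) or that $G=S_{n,\kappa}^{+}$.

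The substantive task is thus to show $\mathcal{CP}\not\subseteq S_{n,\kappa}^{+}$. Observe that $\mathcal{CP}$ is bipartite, with the color class $A$ containing the central vertex having size $\kappa+1$ and the other class $B$ having size $\kappa+t_1+t_2$. An embedding of a bipartite graph $H$ into $S_{n,\kappa}^{+}$ corresponds to choosing $K\subseteq V(H)$ with $|K|\le\kappa$ that covers every edge of $H$ except possibly one (the exceptional edge $w_1w_2$ of $S_{n,\kappa}^{+}$). Equivalently, $H\subseteq S_{n,\kappa}^{+}$ iff $\tau(H-e)\le\kappa$ for some $e\in E(H)\cup\{\emptyset\}$, and by K\"{o}nig's theorem it suffices to show $\nu(\mathcal{CP}-e)\ge\kappa+1$ for every such $e$.

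A direct computation gives $\nu(\mathcal{CP})=\kappa+1$: match the center via a single incident edge inside one chosen constituent, then each of the remaining $t_1+t_2-1$ constituents contributes a matching of size $k_i-1$ or $p_i-1$ in the odd path obtained by deleting the center, for a total of $\kappa+1$. For every edge $e$ of $\mathcal{CP}$ I intend to produce a maximum matching of $\mathcal{CP}$ avoiding $e$: if $e$ is not incident to the center, place the matched center edge in any constituent and then use either of the two disjoint perfect matchings of an even cycle (or the analogous flexibility in the odd path obtained from a constituent by removing the center) to avoid $e$ locally; if $e$ is one of the two center edges of a constituent cycle, use the other center edge of the same cycle; and if $e$ is the center edge of a constituent path, the hypothesis $t_1\ge 1$ or $t_2\ge 2$ guarantees another constituent (a cycle, or a different path) available to host the center-matching edge. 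In every case $\nu(\mathcal{CP}-e)=\kappa+1$, whence $\tau(\mathcal{CP}-e)=\kappa+1>\kappa$ by K\"{o}nig, establishing $\mathcal{CP}\not\subseteq S_{n,\kappa}^{+}$. The only (modest) obstacle lies in this last case, and reflects the genuine fact that the single path $\mathcal{CP}=P_{2p_1}$ corresponding to $t_1=0$, $t_2=1$ does embed in $S_{n,p_1-1}^{+}$; the hypothesis on $(t_1,t_2)$ is exactly what excludes this degeneracy.
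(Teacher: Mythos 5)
Your proposal is correct and follows essentially the same route as the paper: the paper obtains this theorem as an immediate corollary of Theorem \ref{theorem intersecting even cycles} via the containment $\mathcal{CP}_{2k_1,\ldots,2k_{t_1};2p_1,\ldots,2p_{t_2}}\subset C_{2k_1,\ldots,2k_{t_1},2p_1,\ldots,2p_{t_2}}$ (with the same $\kappa$), leaving the non-containment of $\mathcal{CP}$ in $S_{n,\kappa}^{+}$ to the brief discussion in Remark \ref{remark one on intersecting cycles and paths}. Your K\"onig/matching verification of that non-containment is correct and correctly isolates why the hypothesis $t_1\ge 1$ or $t_2\ge 2$ is needed, though strictly speaking it is not required for the literal ``unless'' statement, only for concluding that $S_{n,\kappa}^{+}$ is genuinely extremal for $\mathcal{CP}$.
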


\begin{theorem}
\label{theorem intersecting even cycles and paths 4 vert}
Let $k_i = p_j = 2$ for $1 \le i \le t_1$ and $t:= t_1 + t_2$. If $G$ is a graph of sufficiently large order $n$ and
$\lambda(G) \ge \lambda(F_{n, t})$ then $G$ contains $\mathcal{CP}_{2k_1, \ldots, 2k_{t_1} ; 2p_1, \ldots, 2p_{t_2}}$ unless $G = F_{n, t}$.
\end{theorem}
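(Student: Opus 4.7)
My plan is to reduce Theorem~\ref{theorem intersecting even cycles and paths 4 vert} directly to Theorem~\ref{theorem intersecting 4-cycles} via the subgraph containment noted in Remark~\ref{remark one on intersecting cycles and paths}. The starting observation is that when all $k_i = p_j = 2$, the graph $\mathcal{CP}_{4,\ldots,4;4,\ldots,4}$ is a subgraph of $C_{4,4,\ldots,4}$ built from $t = t_1 + t_2$ four-cycles sharing the central vertex $v_0$. Indeed, by the definition of $\mathcal{CP}$, each constituent $P_4$ was obtained from a $C_4$ through $v_0$ by deleting an edge adjacent to $v_0$, so restoring that edge closes it back into a $C_4$. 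Identifying the original $t_1$ cycles with $t_1$ of the cycles in $C_{4,\ldots,4}$ and each $P_4$ with one of the remaining $t_2$ cycles exhibits the embedding.

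Once this embedding is in hand the argument is immediate. Let $G$ be a graph of sufficiently large order $n$ with $\lambda(G) \ge \lambda(F_{n,t})$. If $G$ contains a copy of $C_{4,4,\ldots,4}$, then it contains $\mathcal{CP}_{4,\ldots,4;4,\ldots,4}$ as a subgraph and we are done. Otherwise $G$ is $C_{4,4,\ldots,4}$-free, and Theorem~\ref{theorem intersecting 4-cycles} forces $G = F_{n,t}$, yielding the required dichotomy.

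The heavy structural and spectral analysis is already packaged inside the proof of Theorem~\ref{theorem intersecting 4-cycles}, so I do not anticipate any new obstacle here; the deduction is purely a subgraph-monotonicity argument. The only fact worth checking, in line with Remark~\ref{remark one on intersecting cycles and paths}, is that $\mathcal{CP}_{4,\ldots,4;4,\ldots,4}$ has the ``correct'' bipartition: the smaller color class consists of $v_0$, the vertex opposite $v_0$ in each of the $t_1$ four-cycles, and the unique vertex at distance two from $v_0$ along each of the $t_2$ paths, giving $1 + t_1 + t_2 = t+1$ vertices. This matches the small side of the bipartition of $C_{4,\ldots,4}$ and is consistent with $F_{n,t}$ (rather than $S_{n,\kappa}^+$) being the expected spectral extremal graph.
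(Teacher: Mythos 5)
Your proposal is correct and is essentially the paper's own argument: the paper derives Theorem \ref{theorem intersecting even cycles and paths 4 vert} directly from Theorem \ref{theorem intersecting 4-cycles} via the containment $\mathcal{CP}_{2k_1,\ldots,2k_{t_1};2p_1,\ldots,2p_{t_2}} \subset C_{4,4,\ldots,4}$ (with $t=t_1+t_2$ cycles), exactly the subgraph-monotonicity dichotomy you describe.
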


\begin{remark}
\label{remark two on intersecting cycles and paths}
The intersecting even cycles $C_{2k_1, \ldots, 2k_t}$ also contains several other connected bipartite subgraphs with a smallest color class consisting of exactly $\kappa + 1$ vertices, apart from those mentioned above. Some of these, however, are subgraphs of $S_{n, \kappa}^+$ (or $F_{n, t}$). For example, consider the graphs obtained by deleting at most one edge from every constituent cycle of $C_{2k_1, \ldots, 2k_t}$, where none of the deleted edges are adjacent to the central vertex. If at least one edge is deleted, such graphs are not contained in $S_{n, \kappa}$ but are contained in $S_{n, \kappa}^+$ (or $F_{n, t}$). 
\end{remark}

 \begin{remark}
 \label{remark three on intersecting cycles and paths}
 Let $H$ be a connected bipartite subgraph of $C_{4, 4, \ldots, 4}$, the intersecting even cycle consisting of $t$ cycles of length four that intersect at a unique vertex. If
 $H\subset F_{n, t}$, then $H \subset S_{n, t}^+$. To see this, first note that if $H \subset F_{n, t}$, it follows from Theorem \ref{theorem intersecting even cycles and paths 4 vert} that $H$ must be obtained from $C_{4, 4, \ldots, 4}$ by deleting at least one edge not adjacent to its center. Next, observe that any graph obtained from $C_{4, 4, \ldots, 4}$ by deleting an edge not adjacent to its center, must be contained in $S_{n, t}^+$ as well. 
    
 \end{remark}

It follows from Remarks \ref{remark two on intersecting cycles and paths} and \ref{remark three on intersecting cycles and paths} that if $H \subset C_{2k_1, 2k_2, \ldots, 2k_t}$, $H \subset S_{n, \kappa}^+$ (or $F_{n, t}$) and the size of the smallest color class of $H$ is $\kappa + 1$, then in fact, $H$ is not contained in $S_{n, \kappa}$. We are now ready to state our final theorem which has an almost identical proof to that for Theorems \ref{theorem intersecting even cycles} and \ref{theorem intersecting 4-cycles}. We therefore, do not include an independent proof for it, to avoid redundancy, but include additional remarks to describe slight modifications required for the same Lemmas to hold true. 

\begin{theorem}
\label{theorem small subgraph of intersecting even cycle}
   Let $k_1, k_2, \ldots, k_t \ge 2$ and $\kappa:= \left(\sum_{i=1}^t k_i\right) - t$. Let $H$ be a connected subgraph of $ C_{2k_1, 2k_2, \ldots, 2k_t}$ with a smallest color class consisting of $\kappa + 1$ vertices. If either $\max\{k_1, k_2, \ldots, k_t\} \ge 3$ and $H \subset S_{n, \kappa}^+$ or $k_1 = k_2 = \ldots = k_t$ and $H \subset F_{n, \kappa}$, then for sufficiently larger order $n$, $\rSPEX(n, H) = S_{n, \kappa}$. 
\end{theorem}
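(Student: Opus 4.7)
The strategy is to mirror the proofs of Theorems~\ref{theorem intersecting even cycles} and \ref{theorem intersecting 4-cycles}, adding a single extra argument at the end that upgrades the extremal graph from $S_{n,\kappa}^+$ (or $F_{n,\kappa}$) to $S_{n,\kappa}$. I first check that $S_{n,\kappa}$ is itself $H$-free: any bipartite subgraph of $S_{n,\kappa} = K_\kappa \vee (n-\kappa) K_1$ must have one color class contained in $K_\kappa$ (no independent set of $S_{n,\kappa}$ containing a vertex of $K_\kappa$ can contain any other vertex), so that color class has at most $\kappa$ elements; since the smaller color class of $H$ has $\kappa + 1$ vertices, $H$ does not embed into $S_{n,\kappa}$. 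Hence $\rSPEX(n,H) \ge \lambda(S_{n,\kappa})$, and the remaining task is to show that every $H$-free graph $G$ with $\lambda(G) \ge \lambda(S_{n,\kappa})$ must equal $S_{n,\kappa}$.

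Let such a $G$ be given. Since $H \subset C_{2k_1, \ldots, 2k_t}$, the graph $G$ is also $C_{2k_1, \ldots, 2k_t}$-free, so all of the structural lemmas underlying the proofs of Theorems~\ref{theorem intersecting even cycles} and \ref{theorem intersecting 4-cycles} are available. I reapply them with the spectral lower bound $\lambda(S_{n,\kappa}^+)$ (resp.\ $\lambda(F_{n,\kappa})$) relaxed to $\lambda(S_{n,\kappa})$. Since $\lambda(S_{n,\kappa}^+) - \lambda(S_{n,\kappa}) = O(1/n)$, the quantitative lemmas carry enough slack for large $n$ that their conclusions persist: there exists a set $W = \{w_1, \ldots, w_\kappa\}$ of vertices of $G$ with $\deg_G(w_i) = n - o(n)$, $W$ induces a copy of $K_\kappa$, and a Perron--Frobenius eigenvector perturbation argument forces every vertex of $V(G) \setminus W$ to be adjacent to every vertex of $W$. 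In particular, $G$ contains $S_{n,\kappa}$ as a spanning subgraph.

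To finish, I rule out every edge inside $V(G) \setminus W$. In both cases of the hypothesis we have $H \subset S_{n,\kappa}^+$: directly when $\max_i k_i \ge 3$, and in the equi-cycle case by Remark~\ref{remark three on intersecting cycles and paths} (where $\kappa = t$). If $xy$ were an edge with $x, y \in V(G) \setminus W$, then for $n$ sufficiently large I can choose $T \subset V(G) \setminus (W \cup \{x,y\})$ with $|T| = |V(H)| - \kappa - 2$; every vertex of $T$ is adjacent to all of $W$, and the induced subgraph on $W \cup \{x,y\} \cup T$ then contains $S_{|V(H)|,\kappa}^+$, and hence contains $H$ by the hypothesis together with the obvious monotonicity $S_{m,\kappa}^+ \subset S_{m',\kappa}^+$ for $m \le m'$. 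This contradicts the $H$-freeness of $G$. Therefore $V(G) \setminus W$ is independent in $G$, and $G = S_{n,\kappa}$. The principal technical obstacle is verifying that the structural lemmas from the proofs of Theorems~\ref{theorem intersecting even cycles} and \ref{theorem intersecting 4-cycles} retain their conclusions under the weakened spectral hypothesis $\lambda(G) \ge \lambda(S_{n,\kappa})$; since this amounts to careful constant-tracking rather than a new idea, it is consistent with the author's plan to record the modifications in the form of remarks only.
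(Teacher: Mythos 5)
Your overall plan (reuse the structural lemmas, then finish by hand) is the paper's plan, but the finishing step has a genuine gap. The lemmas of Sections \ref{section preliminary lemmas}--\ref{section structural results for extremal graphs}, through Lemma \ref{E empty set}, only give $K_{\kappa,n-\kappa}\subseteq G$ with parts $L'$ and $R$; they do \emph{not} show that $W=L'$ induces a clique. In the proofs of Theorems \ref{theorem intersecting even cycles} and \ref{theorem intersecting 4-cycles} the clique on $L'$ appears only in the last lines of Section \ref{section main proof}, by extremality, \emph{after} $G\subseteq S_{n,\kappa}^+$ has been established and because $S_{n,\kappa}^+$ is itself admissible; here $S_{n,\kappa}^+$ contains $H$, so that step is unavailable, and your use of the clique is circular. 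Concretely: to exclude an edge $xy$ inside $V(G)\setminus W$ you embed $S_{|V(H)|,\kappa}^+$ into $W\cup\{x,y\}\cup T$, which needs all $\binom{\kappa}{2}$ edges on $W$; but an $H$-free $G$ cannot simultaneously contain the full clique on $W$, the complete join, and the edge $xy$, so in exactly the situation you are refuting some clique edge may be missing and your copy of $H$ disappears. What is actually needed is either (a) that $H$ embeds into $\kappa K_1\vee\big((n-\kappa-2)K_1\cup K_2\big)$, i.e.\ into $K_{\kappa,n-\kappa}$ plus one edge on the large side with no clique edges used --- this does \emph{not} follow formally from $H\subset S_{n,\kappa}^+$ and requires the structure of $H$ (one must produce a set of at most $\kappa$ vertices, independent in $H$, covering all but at most one edge, e.g.\ inside the small colour class); or (b) a spectral comparison showing that any graph containing $K_{\kappa,n-\kappa}$ together with an $R$-edge but missing a clique edge has spectral radius strictly below $\lambda(S_{n,\kappa})$. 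Neither appears in your write-up, and ``constant-tracking'' in the earlier lemmas cannot supply it. (Incidentally, nothing needs to be relaxed there: Lemma \ref{bounds on lambda} already uses only $\lambda(G)\ge\lambda(S_{n,\kappa})$; the genuine modifications the paper records are Remarks \ref{remark for intersectiong circuits in large complete bipartite graphs} and \ref{remark for G is connected}, which you do not address.)

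The lower-bound step is also argued incorrectly. Colour classes of a subgraph need only be independent in $H$, not in the host graph, so it is false that a bipartite subgraph of $S_{n,\kappa}$ must have a colour class inside $K_\kappa$: a $P_4$ in $S_{n,2}$ using the clique edge has neither class inside $K_2$, and a double star with both classes of size $4$ embeds in $S_{n,2}$, so ``smallest class of size $\kappa+1$'' alone does not prevent an embedding into $S_{n,\kappa}$. The correct criterion is that $H\subset S_{n,\kappa}$ if and only if $H$ has a vertex cover of size at most $\kappa$, and ruling this out uses the specific structure of $H$ (it must contain the entire small class of $C_{2k_1,\ldots,2k_t}$ and at least $\kappa+1$ vertices of the large class); this is precisely what Remarks \ref{remark two on intersecting cycles and paths} and \ref{remark three on intersecting cycles and paths} are invoked for in the paper. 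A further small slip: in the equi-cycle branch you cite Remark \ref{remark three on intersecting cycles and paths} ``where $\kappa=t$'', which holds only when every $k_i=2$, not for general $k_1=\cdots=k_t$.
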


\section{Organization and Notation}

For an integer $k$, let $k':= k - 1$. In what follows, whenever we will be using $t$ positive integers $k_1, k_2, \ldots, k_t \ge 2$, to define an intersecting even cycle $C_{2k_1, 2k_2, \ldots, 2k_t}$, we will assume without loss of generality that $k_1 \le k_2 \le \ldots \le k_t$ and $k_1 \ge 2$. Also, let $\kappa  := \sum_{i=1}^t (k_i - 1) = \sum_{i=1}^t k_i' $. We wish to show that $\mathrm{SPEX}(n, C_{2k_1, 2k_2, \ldots 2k_t}) = \{S_{n, \kappa }^+$\} whenever we have $k_t \ge 3$ and $\mathrm{SPEX}(n, C_{2k_1, 2k_2, \ldots 2k_t}) = \{F_{n, t}\} = \{K_t \vee \left(\left\lfloor\frac{n-t}{2}\right\rfloor K_2 \cup \gamma K_1 \right)\}$, where $\gamma = (n-t) \pmod{2}$. 
 
For $n$ sufficiently large, let $G$ denote some graph in $\mathrm{SPEX}(n, C_{2k_1, 2k_2, \ldots, 2k_t})$. 

Let $\mathrm{x}$ be the scaled Perron vector whose maximum entry is equal to $1$ and let $z$ be a vertex having Perron entry $\mathrm{x}_z = 1$. We define positive constants $\eta, \epsilon$ and $\alpha$ satisfying 
\begin{align}\label{choice of constants}
\begin{split}
\eta & < \min\left\{\frac{1}{10 \kappa}, \frac{1}{(\kappa+t)} \cdot \left[\left( 1 - \frac{4}{5\kappa}\right)\left(\kappa - \frac{1}{16\kappa^2}\right) - (\kappa - 1)\right]\right\}\\
\epsilon &< \min\left\{\eta, \frac{\eta}{2}, \frac{1}{16\kappa
^3}, \frac{\eta}{32\kappa^3+2}\right\}\\
\alpha &< \min \left\{ \eta, \frac{\epsilon^2}{20\kappa}
\right\}.
\end{split}
\end{align}

We leave the redundant conditions in these inequalities so that when we reference this choice of constants it is clear what we are using. 

Let $L$ be the set of vertices in $V(G)$ having ``large" weights and $S = V \setminus L$ be the vertices of ``small" weights, \[L := \{v \in V(G) \mid \mathrm{x}_v \geq \alpha\}, S := \{v \in V(G) \mid \mathrm{x}_v < \alpha\}.\]
 Also, let $M \supset L$ be the set of vertices of ``not too small" a weight: \[M := \{v \in V(G) \mid \mathrm{x}_v\geq \alpha/4\}.\] Denote by $L' \subset L$ the following subset of vertices of ``very large'' weights: \[L' := \{v \in L \mid \mathrm{x}_v \geq \eta\}.\]

 For any set of vertices $A \subset V(G)$ and constant $\gamma$, let $A^\gamma = \{v \in A \mid \rx_v \ge \gamma\}$.

For any graph $H = (V, E)$, let $N_i(u)$ be the vertices at distance $i$ from some vertex $u \in V$. For any set of vertices $A \subset V$, let $A_i = A_i(u) = A \cap N_i(u)$. Thus, for the graph $G$, $L_i(u), S_i(u)$ and $M_i(u)$ are the sets of vertices in $L, S$ and $M$ respectively, at distance $i$ from $u$. If the vertex is unambiguous from context, we will use $L_i$, $S_i,$ and $M_i$ instead.  Let $d(u) := |N_1(u)|$ and $d_A(u) := |A_1(u)|$ denote the \textit{degree} of $u$ in $H$ and the number of neighbors of $u$ lying in $A$, respectively. In case the graph $H$ is not clear from context, we will use $d_H(u)$ instead of $d(u)$ to denote the degree of $u$ in $H$. Also, let $H[A]$ denote the induced subgraph of $H$ by the set of vertices $A$, and for two disjoint subsets $A, B \subset V$, let $H[A, B]$ denote the bipartite subgraph of $H$ induced by the sets $A$ and $B$ that has vertex set $A \cup B$ and edge set consisting of all edges with one vertex in $A$ and the other in $B$. 
Finally, let $E(A)$, $E(A,B)$ and $E(H)$ denote the set of edges of $H[A]$, $H[A,B]$ and $H$, respectively, and $e(A), e(A, B)$, and $e(H)$ denote the cardinalities $|E(A)|$, $|E(A,B)|$ and $|E(H)|$, respectively.   

In Section \ref{section preliminary lemmas} we give some background graph theory lemmas and prove preliminary lemmas on the spectral radius and Perron vector of spectral extremal graphs. We reiterate that every lemma applies to both the spectral extremal graphs when the forbidden graph is an intersecting cycle and when it is some $H \subset C_{2k_1, \ldots, 2k_t}$ as in the statement for Theorem \ref{theorem small subgraph of intersecting even cycle}. Observations specific to Theorem \ref{theorem small subgraph of intersecting even cycle} appear only in Remarks \ref{remark for intersectiong circuits in large complete bipartite graphs} and \ref{remark for G is connected}.
In Section \ref{section structural results for extremal graphs} we progressively refine the structure of our spectral extremal graphs. Using these results, we prove Theorems \ref{theorem intersecting even cycles} and \ref{theorem intersecting 4-cycles} in Section \ref{section main proof}.  We end with Section \ref{section minor free} giving some applications of Theorems \ref{theorem intersecting even cycles} and \ref{theorem intersecting 4-cycles} to the related minor-free spectral Tur\' an problem for intersecting even cycles.

\section{Lemmas from spectral and extremal graph theory}
\label{section preliminary lemmas}

In this section we record several lemmas that will be used subsequently. Some lemmas involve calculations that require $n$ to be sufficiently large, without stating this explicitly.
We begin this section by recording two known results.

\begin{lemma}\cite{cioabua2022evencycle}
\label{lower bound for spectral radius of nonnegative matrices}
For a non-negative symmetric matrix $B$, a non-negative non-zero vector $\mathbf{\mathrm{y}}$ and a positive constant $c$, if $B \mathbf{\mathrm{y}} \geq c \mathbf{\mathrm{y}}$ entrywise, then $\lambda(B) \geq c$. 
\end{lemma}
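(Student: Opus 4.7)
The plan is to invoke the Rayleigh quotient characterization of the spectral radius. Since $B$ is real symmetric, $\lambda(B)$ coincides with the largest eigenvalue of $B$ and admits the variational expression
\[
\lambda(B) \;=\; \max_{\mathbf{z} \neq \mathbf{0}} \frac{\mathbf{z}^{\top} B \mathbf{z}}{\mathbf{z}^{\top} \mathbf{z}}.
\]
The strategy is simply to substitute the test vector $\mathbf{z} = \mathbf{\mathrm{y}}$ and lower bound the resulting quotient using the hypothesis.

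The key step is to take the inner product of the entrywise inequality $B\mathbf{\mathrm{y}} \geq c\,\mathbf{\mathrm{y}}$ with the non-negative vector $\mathbf{\mathrm{y}}$ itself. Since each coordinate $\mathrm{y}_i$ is non-negative, multiplying the $i$-th coordinate inequality by $\mathrm{y}_i$ preserves direction, and summing over $i$ yields $\mathbf{\mathrm{y}}^{\top} B \mathbf{\mathrm{y}} \geq c\,\mathbf{\mathrm{y}}^{\top}\mathbf{\mathrm{y}}$. Because $\mathbf{\mathrm{y}}$ is non-zero and non-negative, $\mathbf{\mathrm{y}}^{\top}\mathbf{\mathrm{y}} > 0$, and dividing gives $\lambda(B) \geq \mathbf{\mathrm{y}}^{\top} B \mathbf{\mathrm{y}}/\mathbf{\mathrm{y}}^{\top}\mathbf{\mathrm{y}} \geq c$, which is the desired bound.

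There is essentially no obstacle here: the conclusion is a one-line consequence of Rayleigh once the hypothesis is paired against $\mathbf{\mathrm{y}}$. As a remark, one could equivalently cite the Collatz--Wielandt formula $\lambda(B) = \sup_{\mathbf{y} \geq \mathbf{0},\,\mathbf{y} \neq \mathbf{0}} \min_{i:\,\mathrm{y}_i > 0} (B\mathbf{y})_i/\mathrm{y}_i$, from which the bound is immediate once one checks $(B\mathbf{\mathrm{y}})_i/\mathrm{y}_i \geq c$ at every $i$ with $\mathrm{y}_i > 0$. A Perron--Frobenius based argument, pairing with a non-negative Perron eigenvector of $B$, would also work but has to address the edge case in which the Perron vector and $\mathbf{\mathrm{y}}$ have disjoint supports; the Rayleigh route sidesteps this issue cleanly, so that is the approach I would present.
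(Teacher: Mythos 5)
Your proof is correct, and since the paper only cites this lemma from \cite{cioabua2022evencycle} without reproducing a proof, your Rayleigh-quotient argument (pairing $B\mathbf{\mathrm{y}} \geq c\mathbf{\mathrm{y}}$ with $\mathbf{\mathrm{y}}$ and dividing by $\mathbf{\mathrm{y}}^{\top}\mathbf{\mathrm{y}} > 0$) is exactly the standard one used in the cited source. One small precision: symmetry alone identifies the maximum of the Rayleigh quotient with the largest eigenvalue, not with the spectral radius, but this is harmless here since either non-negativity of $B$ (Perron--Frobenius) or the trivial inequality $\lambda(B) \geq \lambda_{\max}(B)$ closes that gap.
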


\begin{lemma}[Erd\H{o}s-Gallai \cite{gallai1959maximal}]
\label{extremal path}
Any graph on $n$ vertices with no subgraph isomorphic to a path on $\ell$ vertices has at most $\dfrac{(\ell-2)n}{2}$ edges.
\end{lemma}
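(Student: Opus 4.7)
The plan is to prove the bound $e(G) \le (\ell-2)n/2$ by induction on the number of vertices $n$. For the base case $n \le \ell - 1$ the inequality is immediate, since any graph on $n$ vertices has at most $\binom{n}{2} = n(n-1)/2 \le n(\ell-2)/2$ edges.

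For the inductive step, assume $n \ge \ell$ and let $G$ be a $P_\ell$-free graph on $n$ vertices. I would split on the minimum degree. If some vertex $v$ has $d(v) \le (\ell-2)/2$, then $G - v$ is still $P_\ell$-free on $n - 1$ vertices, so the inductive hypothesis yields
\[ e(G) = e(G - v) + d(v) \le \frac{(\ell-2)(n-1)}{2} + \frac{\ell-2}{2} = \frac{(\ell-2)n}{2}, \]
closing this case. The remaining case is $\delta(G) > (\ell-2)/2$, equivalently $\delta(G) \ge \lceil (\ell-1)/2 \rceil$; here I would aim to produce a copy of $P_\ell$ in $G$ and obtain a contradiction, which would rule out this case entirely.

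The main obstacle, and the technical heart of the argument, is producing a long path under the minimum degree assumption. I would appeal to a Dirac-type lemma: any graph on $n \ge 2$ vertices with minimum degree at least $k$ contains a path with at least $\min\{2k,\, n-1\}$ edges. Applied with $k = \lceil (\ell-1)/2 \rceil$ and $n \ge \ell$, this gives a path on at least $\ell$ vertices, contradicting $P_\ell$-freeness. The Dirac-type lemma itself is handled by the classical P\'osa rotation argument on a longest path $P = v_0 v_1 \cdots v_k$ in $G$: if $v_0$ has a neighbor $v_i$ with $i \ge 2$, then the rotated sequence $v_{i-1} v_{i-2} \cdots v_0 v_i v_{i+1} \cdots v_k$ is another longest path with new endpoint $v_{i-1}$; iterating and tracking the set of rotation-reachable endpoints forces $P$ to have at least $\min\{2k,\,n-1\} + 1$ vertices, because otherwise some rotated endpoint would still have a neighbor off $P$, permitting an extension and violating maximality of $P$. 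Everything outside this rotation step is routine inductive bookkeeping.
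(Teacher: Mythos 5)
There is a genuine gap in the case $\delta(G) > (\ell-2)/2$. The Dirac-type lemma you invoke --- ``any graph on $n \ge 2$ vertices with minimum degree at least $k$ contains a path with at least $\min\{2k,\,n-1\}$ edges'' --- is false without a connectivity hypothesis: a disjoint union of copies of $K_{k+1}$ has minimum degree $k$ but no path with more than $k$ edges. Correspondingly, your claim that the high-minimum-degree case can be ``ruled out entirely'' is wrong: a disjoint union of copies of $K_{\ell-1}$ has arbitrarily many vertices, minimum degree $\ell-2 > (\ell-2)/2$, and contains no $P_\ell$, so no contradiction can be derived there. The failure point in your own sketch is the final step of the rotation argument, where you assert that some rotation-reachable endpoint ``would still have a neighbor off $P$''; in a disconnected graph all such neighbors may be confined to the component of $P$, and the extension step collapses. (For reference, the paper does not prove this lemma at all --- it is quoted with a citation to Erd\H{o}s--Gallai --- so you are being measured against the classical argument, which handles exactly this point.)

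The repair is standard and small: add a case to the induction for disconnected $G$. If $G$ is disconnected, each component $C$ has fewer than $n$ vertices and is $P_\ell$-free, so by the inductive hypothesis $e(C) \le (\ell-2)|C|/2$, and summing over components gives $e(G) \le (\ell-2)n/2$. In the remaining case $G$ is connected with $\delta(G) \ge \lceil (\ell-1)/2 \rceil$ and $n \ge \ell$, and there the P\'osa/Dirac rotation argument is valid: on a longest path $P=v_0\cdots v_m$ with $m \le \ell-2 \le 2\lceil(\ell-1)/2\rceil - 1$, all neighbors of $v_0$ and $v_m$ lie on $P$, a pigeonhole argument produces indices with $v_m \sim v_i$ and $v_0 \sim v_{i+1}$, hence a cycle on $V(P)$, and connectivity together with $V(P) \ne V(G)$ (which holds since $m+1 \le \ell-1 < n$) yields a longer path, a contradiction. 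With that one additional case your induction closes; the base case and the low-degree vertex deletion step are fine as written.
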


It is easy to see that any bipartite graph is contained in a complete bipartite graph, where the color classes in either graphs have the same sizes. 
The following lemma is the version of this known result on intersecting even cycles, and we record it here to refer to subsequently.

\begin{lemma}
\label{intersectiong circuits in large complete bipartite graphs}
For $1 \le i \le t$, $k_i \ge 3$, and $\sum_{i = 1}^t k_i' = \kappa$, the bipartite graph $K_{\kappa + 1, \kappa + t}$ contains $C_{2k_1, \ldots, 2k_t}$. \end{lemma}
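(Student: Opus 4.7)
The plan is to simply identify the bipartition of $C_{2k_1, \ldots, 2k_t}$ and observe that any bipartite graph embeds into the complete bipartite graph on its color classes. Let $v$ denote the unique central vertex where all $t$ even cycles $C_{2k_1}, \ldots, C_{2k_t}$ meet. Since each $C_{2k_i}$ is a bipartite graph and all of them share only $v$, the whole graph $C_{2k_1, \ldots, 2k_t}$ is bipartite; let $(X, Y)$ be its bipartition with $v \in X$.

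Next I would compute $|X|$ and $|Y|$ by summing the contributions of each cycle, taking care not to double-count $v$. In the cycle $C_{2k_i}$, the vertex $v$ sits in one color class of size $k_i$, and the opposite class has size $k_i$ as well; removing $v$ leaves $k_i - 1 = k_i'$ vertices on the side of $v$ and $k_i$ vertices on the opposite side. Summing over $i$ and adding back $v$ once yields
\[
|X| = 1 + \sum_{i=1}^t k_i' = \kappa + 1, \qquad |Y| = \sum_{i=1}^t k_i = \kappa + t.
\]

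Finally, since every bipartite graph on color classes of sizes $a$ and $b$ is a subgraph of $K_{a,b}$ (simply embed the two classes into the two sides of $K_{a,b}$ via any injections), we conclude that $C_{2k_1, \ldots, 2k_t} \subset K_{\kappa+1, \kappa+t}$. There is no real obstacle here; the only point to be careful about is the bookkeeping of the central vertex when counting the two sides of the bipartition, which is handled by the identity $1 + \sum(k_i - 1) = \kappa + 1$ on one side and $\sum k_i = \kappa + t$ on the other.
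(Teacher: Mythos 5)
Your proposal is correct and follows essentially the same route as the paper: identify the bipartition of $C_{2k_1,\ldots,2k_t}$, count the side containing the center as $\kappa+1$ and the other side as $\kappa+t$, and embed into the complete bipartite graph on those classes. Your version merely spells out the per-cycle bookkeeping that the paper leaves implicit.
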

\begin{proof}
The intersecting cycle $C_{2k_1, \ldots, 2k_t}$ is a bipartite graph, whose smaller part (which contains the center of the intersecting cycle) has $\left(\sum_{i = 1}^t k_i'\right) + 1 = \kappa + 1$ vertices and larger part has $\kappa + t$ vertices.  
It is clear from this that $C_{2k_1, \ldots, 2k_t}  \subset K_{\kappa + 1,\kappa + t}$. 

\end{proof}

\begin{remark}
\label{remark for intersectiong circuits in large complete bipartite graphs}
    Note that, if $H$ is a subgraph of $C_{2k_1, \ldots, 2k_t}$, with $\kappa$ as defined above, then $H$ is also contained in $K_{\kappa + 1, \kappa + t}$. 
\end{remark}

The following lemma will be used to show that any  $C_{2k_1, \ldots, 2k_t}$-free graph $G$ cannot contain a large path $P$ such that $E(P) \subseteq E(G[N_1(v)] \cup G[N_1(v), N_2(v)])$, for any vertex $v$ of $G$.   

\begin{lemma}
\label{long paths in bipartite subgraph}
Let $1 \le i \le t$, $k_i \ge 2$, with $\sum_{i = 1}^t k_i' = \kappa$ and $H = (V, E)$ be a graph whose vertices are partitioned into two sets $U$ and $W$. If $E = E(H[U]) \cup E(H[U, W])$, and $H$ has a path on $4\kappa  + t$ vertices, then $H$ has $t$ disjoint paths of order $2k_i' + 1$, $1 \le i \le t$, and all the endpoints of the paths are in $U$.
\end{lemma}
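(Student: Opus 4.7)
The plan is to reduce the statement to a local combinatorial claim on blocks of the given path and then apply it to $t$ disjoint blocks. Let $P = v_1 v_2 \cdots v_N$ with $N = 4\kappa + t$ be the guaranteed path on $4\kappa + t$ vertices, and observe that since $E(H[W]) = \emptyset$, no two consecutive vertices of $P$ lie in $W$. I would partition the index set $[1, N]$ into $t$ consecutive blocks $B_1, \ldots, B_t$ of sizes $4k_1' + 1, \ldots, 4k_t' + 1$; the identity $\sum_{i=1}^{t}(4k_i' + 1) = 4\kappa + t = N$ makes this an exact tiling. If inside each block $B_i$ I can find two $U$-vertices of $P$ at distance exactly $2k_i'$ along $P$, then the subpath of $P$ between them has order $2k_i' + 1$ and both endpoints in $U$; since the blocks are disjoint, so are the $t$ resulting subpaths.

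The main work then reduces to the following sub-claim: whenever $W' \subseteq [1, 4k' + 1]$ has no two consecutive elements, some $\ell \in [1, 2k' + 1]$ satisfies both $\ell \notin W'$ and $\ell + 2k' \notin W'$. I would argue by contradiction: assume every pair $(\ell, \ell + 2k')$ with $\ell \in [1, 2k' + 1]$ meets $W'$, and set $A = W' \cap [1, 2k' + 1]$ and $B = W' \cap [2k' + 1, 4k' + 1]$. The shift $\phi(\ell) = \ell + 2k'$ sends $A' := [1, 2k' + 1] \setminus A$ into $B$, yielding $|A| + |B| \ge 2k' + 1$. Combined with the independence bound $|W'| \le 2k' + 1$ on a path of $4k' + 1$ vertices, this leaves $|W'| \in \{2k', 2k' + 1\}$.

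If $|W'| = 2k' + 1$, then $W'$ must equal the unique maximum independent set $\{1, 3, \ldots, 4k' + 1\}$, but then $\ell = 2$ contradicts the assumption since $\phi(2) = 2k' + 2 \notin W'$. If $|W'| = 2k'$, necessarily $2k' + 1 \in W'$ (otherwise $|A \cap B| = 0$ forces $|W'| = 2k' + 1$) and $\phi(A') = B$ exactly. Propagating the forced memberships from $2k' + 1 \in W'$ then places each of $2, 4, \ldots, 2k'$ in $W'$ via the $B'$-to-$A$ pre-images of $\phi$, and each of $2k' + 1, 2k' + 3, \ldots, 4k' + 1$ in $W'$ via the $A'$-to-$B$ images, after which $4k'$ and $4k' + 1$ are simultaneously in $W'$, violating independence. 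I expect this $|W'| = 2k'$ case to be the main obstacle, since its contradiction emerges only after iterative unpacking of the forced $W'$-memberships along two interlocking arithmetic progressions; the outer reduction is driven only by the arithmetic identity $\sum_{i=1}^{t}(4k_i' + 1) = N$, which dictates the block sizes.
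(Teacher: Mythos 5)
Your outer reduction is exactly the paper's: tile the path on $4\kappa+t$ vertices into consecutive blocks of sizes $4k_i'+1$ (using $\sum_{i=1}^t(4k_i'+1)=4\kappa+t$) and extract from each block a subpath of order $2k_i'+1$ with both ends in $U$; that part, together with the observation that no two consecutive path-vertices lie in $W$, is fine, as is your case $|W'|=2k'+1$ (unique maximum independent set is the odd positions, and $\ell=2$ works). The genuine gap is in the case $|W'|=2k'$, which you yourself flag as the main obstacle. The facts you claim to propagate there are not derivable and are in fact false: $2k'\in W'$ is impossible because $2k'+1\in W'$ and $W'$ is independent, and $4k'+1\in W'$ is impossible because the exact equality $\phi(A')=B$ together with $2k'+1\in W'$ (so $2k'+1\notin A'$) forces $4k'+1\notin B$. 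Consequently the announced final contradiction, namely $4k'$ and $4k'+1$ both lying in $W'$, never occurs. Checking small cases confirms this: for $k'=1,2,3$ the hypotheses force membership patterns such as $\{3\}$, $\{2,5,8\}$, $\{2,5,7,9,12\}$, and the contradiction that actually arises is of a different shape -- some pair $(j,j+2k')$ with $j\in[1,2k'+1]$ ends up entirely outside $W'$, violating the relation $m(j)+m(j+2k')=1$ that follows from $\phi(A')=B$ (equivalently, $|W'|$ comes out smaller than $2k'$). So as written the main case is not closed.

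The case is closable, but by a different propagation than the one you describe: independence pushes zeros outward from $2k'+1$ (and from each newly forced member of $W'$), the relation $m(j)+m(j+2k')=1$ converts each such zero into a forced one on the other side, and the two fronts eventually collide on a pair $(j,j+2k')$ both of whose entries have been forced to zero -- contradiction with the relation, not with independence. This corrected argument is essentially the paper's proof of its claim, which avoids your counting and case split entirely: it takes a subpath $v_1\cdots v_{4k_i'}$ starting at a $U$-vertex, assumes no subpath of order $2k_i'+1$ has both ends in $U$, and alternately forces $v_2,v_4,\ldots$ into $W$ and $v_{2k_i'+2},v_{2k_i'+4},\ldots$ into $U$ until it forces $v_{2k_i'}$ into both $U$ and $W$. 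If you want to keep your counting framework, you must replace the claimed memberships $\{2,4,\ldots,2k'\}$ and $\{2k'+1,2k'+3,\ldots,4k'+1\}$ by a correct iterative derivation of this collision; otherwise the lemma is not proved for the $|W'|=2k'$ case.
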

\begin{proof}
We will prove the lemma by first proving the following claim.
\begin{claim}
If $H$ contains a path with $4k_i' + 1$ vertices then $H$ must contain a path of order $2k_i' + 1$ with both end points in $U$.
\end{claim}
\begin{proof}
If $H$ has a path on $4k_i' + 1$ vertices, then there must be a subpath $v_1 v_2 \ldots v_{4k_i'}$ of length $4k_i'$ where we may assume without loss of generality that $v_1 \in U$. 
We proceed to show that $v_1 v_2 \ldots v_{4k_i'}$ contains a path $v_{i_0} v_{i_0 + 1} \ldots v_{i_0 + 2k_i'}$ of order $2k_i' + 1$ with both endpoints in $U$ via contradiction. To do this assume to the contrary that there is no such subpath of order $2k_i' + 1$ with both endpoints in $U$.

Since $v_1 \in U$, we have that $v_{2k_i'+1} \in W$. Then both $v_{2k_i'}, v_{2k_i' + 2} \in U$. Now, if $v_2 \in U$ then we have a contradiction that $v_2 \ldots v_{2k_i'+2}$ is a path of order $2k_i' + 1$ with both endpoints in $U$. So assume $v_2 \in W$ and therefore $v_3 \in U$. Thus $v_{2k_i'+3} \in W$ and $v_{2k_i'+4} \in U$. 
Proceeding similarly we get \[v_4 \in W, v_5 \in U, v_{2k_i' + 5} \in W, \ldots, v_{2k_i' - 2} \in W, v_{2k_i' - 1} \in U, v_{4k_i' - 1} \in W, v_{4k_i'} \in U, \text{ and finally } v_{2k_i'} \in W,\] a contradiction. 
Thus there must exist a path $v_{i_0} v_{i_0 + 1} \ldots v_{i_0 + 2k_i'}$ in $v_1 v_2 \ldots v_{4k_i'}$ with both endpoints in $U$. 
\end{proof}
 
Next assume that $H$ contains a path $v_1 v_2 \ldots v_{4\kappa + t}$ of order $4\kappa  + t$. Then using our previous claim we have that the path $v_1 v_2 \ldots v_{4k_1' + 1}$ must have a subpath with $2k_1' + 1$ vertices, both of whose endpoints are in $U$. Similarly, the path $v_{4k_1' + 2} v_{4k_1' + 3} \ldots v_{4k_1' + 4k_2' + 2}$, must contain a subpath with $2k_2' + 1$ vertices, both of whose endpoints are in $U$. 
In general, we have that the path $v_{4(\sum_{j=1}^{i-1} k_j') + i} v_{4(\sum_{j=1}^{i-1} k_j') + i + 1} \ldots v_{4(\sum_{j=1}^{i} k_j') + i}$  must contain a subpath with $2k_i' + 1$ vertices both of whose endpoints are in $U$, for all $1 \le i \le t$.

Thus, the statement of the lemma follows. 
\end{proof}

We can combine Lemmas \ref{extremal path} and \ref{long paths in bipartite subgraph} to get the following Lemma for disjoint paths in a subgraph.
\begin{lemma}
\label{Many edges in a bipartition II}
Suppose that $t \geq 1$, $k_i' \ge 1$ for all $1 \le i \le t$, and $\kappa = \sum_{i=1}^t k_i'$. For a graph $\hat{H}$ on $n$ vertices, let $U \sqcup W$ be a partition of its vertices.
If
    \begin{equation}
    \label{eqn Many edges in a bipartition A II}
        e(U) + e(U, W) > \frac{\left(4\kappa +t -2\right)}{2}n
        \end{equation}
then there exist $t$ disjoint paths of orders $2k_i' + 1$, for $1 \le i \le t$, with both ends in $U$.    

Further, if 
\begin{equation}
\label{eqn Many edges in a bipartition B}
        2e(U) + e(U, W) > \frac{\left(4\kappa +t -2\right)}{2}(|U| + n)
         \end{equation}
then there exist $t$ disjoint paths of orders $2k_i' + 1$, for $1 \le i \le t$, with both ends in $U$.
\end{lemma}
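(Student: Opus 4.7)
The plan is to reduce both statements to Lemma \ref{long paths in bipartite subgraph} by first producing a long path via the Erd\H{o}s-Gallai bound (Lemma \ref{extremal path}). To set things up, I form the spanning subgraph $H' \subseteq \hat{H}$ on the same vertex set $U \sqcup W$ whose edge set is exactly $E(\hat{H}[U]) \cup E(\hat{H}[U, W])$. Then $H'$ has $n$ vertices, $e(H') = e(U) + e(U,W)$, and every edge of $H'$ lies in $E(H'[U]) \cup E(H'[U,W])$, which is exactly the hypothesis needed to invoke Lemma \ref{long paths in bipartite subgraph}. So it suffices, in each case, to show that $H'$ contains a path on $4\kappa + t$ vertices.

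For the first statement, I would argue by contrapositive: if $H'$ has no path on $4\kappa + t$ vertices, then Lemma \ref{extremal path} applied to $H'$ (on $n$ vertices, with $\ell = 4\kappa + t$) gives $e(H') \le \tfrac{(4\kappa + t - 2)}{2} n$, contradicting \eqref{eqn Many edges in a bipartition A II}. Hence $H'$ contains a path on $4\kappa + t$ vertices, and Lemma \ref{long paths in bipartite subgraph} delivers the required $t$ disjoint paths of orders $2k_i' + 1$ with endpoints in $U$.

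For the second statement, the key observation is the identity
\[
2e(U) + e(U,W) \;=\; e(U) + \bigl(e(U) + e(U,W)\bigr) \;=\; e(\hat{H}[U]) + e(H').
\]
If $H'$ contained no path on $4\kappa + t$ vertices, then neither would its induced subgraph $\hat{H}[U]$ (which lives inside $H'$ on $|U|$ vertices). Applying Lemma \ref{extremal path} separately to $\hat{H}[U]$ and to $H'$ yields $e(\hat{H}[U]) \le \tfrac{(4\kappa + t - 2)}{2}|U|$ and $e(H') \le \tfrac{(4\kappa + t - 2)}{2} n$; summing contradicts \eqref{eqn Many edges in a bipartition B}. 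So again $H'$ contains a path on $4\kappa + t$ vertices, and Lemma \ref{long paths in bipartite subgraph} finishes the job.

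There is no real obstacle here — the only thing to notice is the rewriting $2e(U) + e(U,W) = e(\hat{H}[U]) + e(H')$, after which the two statements come from applying Erd\H{o}s-Gallai to $H'$ alone (for the first inequality) and to both $\hat{H}[U]$ and $H'$ (for the second, weighted-by-$U$ inequality).
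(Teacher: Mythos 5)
Your proof is correct and follows essentially the same route as the paper: build the auxiliary subgraph with edge set $E(U)\cup E(U,W)$, apply Erd\H{o}s--Gallai (Lemma \ref{extremal path}) to force a path on $4\kappa+t$ vertices, and then invoke Lemma \ref{long paths in bipartite subgraph}, with the second case handled by summing the Erd\H{o}s--Gallai bounds for $\hat{H}[U]$ and for the auxiliary graph. The only (harmless) cosmetic difference is that you funnel both cases through the long-path lemma on $H'$, whereas the paper treats the many-edges-in-$U$ case directly via a path inside $U$.
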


\begin{proof}
Let $H$ be the subgraph of $\hat{H}$ on the same vertex set $U \sqcup W$ with edge set $E(U) \sqcup E(U, W)$. Then by applying Lemma \ref{extremal path} on $H$ and the assumption $e(U) + e(U, W) > \frac{\left(4\kappa +t -2\right)}{2}n$, we have that $H$ must contain a path of order $4\kappa + t$. Applying Lemma \ref{long paths in bipartite subgraph} on the subgraph $H$ of $\hat{H}$ which is partitioned into two sets $U$ and $W$, we have that $H$ and therefore $\hat{H}$ contains $t$ disjoint paths of orders $2k_i' + 1$ for all $1 \le i \le t$.

Similarly, if $e(U) > \frac{\left(4\kappa +t -2\right)}{2}|U|$, we have that $\hat{H}[U]$ and therefore $\hat{H}$ contains $t$ disjoint paths of orders $2k_i' + 1$ for all $1 \le i \le t$. Thus, if $\hat{H}$ does not contain $t$ disjoint paths of orders $2k_i' + 1$ for all $1 \le i \le t$, we must have that $2e(U) + e(U, W) = e(U) + e(U) + e(U, W) \le \frac{\left(4\kappa +t -2\right)}{2}(|U| + n)$.
\end{proof}

The following result on the sum of degree squares of a graph generalizes Theorem 2 of \cite{nikiforov2009degree} to the case where the forbidden graphs are intersecting even cycles.
\begin{theorem}
\label{theorem degree squares}
Let $G$ be a graph with $n$ vertices and $m$ edges. If $G$ does not contain a $C_{2k_1, 2k_2, \ldots, 2k_t}$ where $k_1 \ge 2$, then
\begin{equation}
    \sum_{u \in V(G)} d_G^2(u) < (4\kappa + t)(n-1)n.
\end{equation}
\end{theorem}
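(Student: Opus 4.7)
The plan is to bound $\sum_u d_G^2(u)$ through a local count at each vertex, starting from the identity $\sum_u d_G^2(u) = \sum_u \sum_{v \in N_1(u)} d_G(v)$. For any vertex $u$, decomposing the neighborhood of each $v \in N_1(u)$ as $\{u\} \sqcup (N_1(u) \setminus \{v\}) \sqcup N_2(u)$ yields
$$\sum_{v \in N_1(u)} d_G(v) \;=\; d_G(u) + 2\,e(N_1(u)) + e(N_1(u), N_2(u)).$$
Summing over all $u \in V(G)$ and using $\sum_u d_G(u) = 2m$ gives the BFS identity
$$\sum_{u \in V(G)} d_G^2(u) \;=\; 2m + \sum_{u \in V(G)} \bigl(2\,e(N_1(u)) + e(N_1(u), N_2(u))\bigr).$$

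Next, for each fixed vertex $u$, I would apply the second inequality of Lemma~\ref{Many edges in a bipartition II} to $\hat H := G - u$ on $n-1$ vertices, with partition $U := N_1(u)$ and $W := V(G) \setminus (N_1(u) \cup \{u\})$. Since every $G$-neighbor of a vertex $v \in N_1(u)$ lies in $\{u\} \cup N_1(u) \cup N_2(u)$, we have $e_{\hat H}(U) = e_G(N_1(u))$ and $e_{\hat H}(U, W) = e_G(N_1(u), N_2(u))$. If the hypothesis
$$2\, e_G(N_1(u)) + e_G(N_1(u), N_2(u)) \;>\; \tfrac{4\kappa + t - 2}{2}\bigl(d_G(u) + n - 1\bigr)$$
held, Lemma~\ref{Many edges in a bipartition II} would produce $t$ pairwise vertex-disjoint paths $P_1,\dots,P_t$ in $\hat H$ of orders $2k_i' + 1$ respectively, each with both endpoints in $N_1(u)$. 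Adjoining $u$ and the two edges from $u$ to the endpoints of $P_i$ extends $P_i$ to a cycle of length $2k_i' + 2 = 2k_i$; since the $P_i$ are vertex-disjoint, these $t$ cycles meet pairwise only at $u$, yielding a forbidden copy of $C_{2k_1, 2k_2, \ldots, 2k_t}$ in $G$. Therefore, for every $u \in V(G)$,
$$2\,e(N_1(u)) + e(N_1(u), N_2(u)) \;\le\; \tfrac{4\kappa + t - 2}{2}\bigl(d_G(u) + n - 1\bigr).$$

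Summing this inequality over $u$ and substituting into the BFS identity,
$$\sum_u d_G^2(u) \;\le\; 2m + \tfrac{4\kappa + t - 2}{2}\bigl(2m + n(n-1)\bigr) \;=\; (4\kappa + t)\,m + \tfrac{4\kappa + t - 2}{2}\,n(n-1).$$
Finally, bounding $m \le \binom{n}{2} = \tfrac{n(n-1)}{2}$ gives
$$\sum_u d_G^2(u) \;\le\; \tfrac{(4\kappa + t) + (4\kappa + t - 2)}{2}\,n(n-1) \;=\; (4\kappa + t - 1)\,n(n-1) \;<\; (4\kappa + t)(n-1)n,$$
completing the proof. The only mildly delicate point is checking that the conclusion of Lemma~\ref{Many edges in a bipartition II} combines cleanly with the center $u$ to produce the desired intersecting cycle; the path-orders $2k_i' + 1$ are tailored precisely so that closing each $P_i$ through $u$ gives a $C_{2k_i}$ and the union becomes $C_{2k_1, 2k_2, \ldots, 2k_t}$.
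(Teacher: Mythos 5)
Your proposal is correct and follows essentially the same route as the paper: the same neighborhood decomposition $\sum_{v \in N_1(u)} d(v) = d(u) + 2e(N_1(u)) + e(N_1(u), N_2(u))$, the same application of the second inequality of Lemma~\ref{Many edges in a bipartition II} (closing the $t$ disjoint paths through the center $u$ to forbid $C_{2k_1,\ldots,2k_t}$), and the same final use of $m \le \binom{n}{2}$. The only cosmetic difference is that you apply the lemma to $G-u$ rather than to $G[N_1(u)\cup N_2(u)]$, and you spell out the cycle-construction step that the paper leaves implicit.
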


\begin{proof}
Let $u$ be any vertex of $G$. Now 

    \begin{equation}
    \begin{split}
    \sum_{v \in N_1(u)} \left(d(v) - 1\right) = 2e(G[N_1(u)]) + e(G[N_1(u), N_2(u)])  
    &\le \frac{(4\kappa +t -2)}{2}\left(|N_1(u)| + (|N_1(u)| + |N_2(u)|)\right)\\
    & \le (2\kappa + t/2 - 1)(d(u) + n - 1).
\end{split}
\end{equation}
Thus,

\begin{equation}
    \begin{split}
        \sum_{u \in V(G)}d_G^2(u) = \sum_{u \in V(G)}\sum_{v \in N_1(u)} d(v) &\le \sum_{u \in V(G)}((2\kappa + t/2 - 1)(d(u) + n-1) + d(u))\\ 
        & \le (4\kappa + t )e(G) + (2\kappa + t/2 - 1)(n-1)n\\
        & < (4\kappa + t)(n-1)n,
    \end{split}
\end{equation}
where the last inequality uses $e(G) \le \binom{n}{2}$.
\end{proof}

The following result follows from the works of F\"uredi \cite{furedi1991turan}, and Alon, Krivelevich and Sudakov \cite{alon2003turan}. The latter is recorded as Theorem \ref{alonturan} above.
\begin{lemma}
\cite{alon2003turan, furedi1991turan}
\label{extremal containing C4}
Let $H$ be a bipartite graph with maximum degree at most two on one side, then there exists a positive constant $C$ such that 
\[\mathrm{ex}(n, H) \le Cn^{3/2}.\]
\end{lemma}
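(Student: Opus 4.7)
The plan is to apply Theorem~\ref{alonturan} with the parameter $r=2$ directly. Write $H=(A\cup B,F)$ where $B$ is the side on which every vertex has degree at most $2$, and set $a=|A|$, $b=|B|$. If $G$ is an $n$-vertex graph with $m$ edges and average degree $d=2m/n$, then the contrapositive of Theorem~\ref{alonturan} says that if $G$ is $H$-free we must have
\[
\frac{d^{2}}{n}-\binom{n}{2}\!\left(\frac{a+b-1}{n}\right)^{\!2}\le a-1.
\]

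Rearranging gives
\[
d^{2}\le (a-1)n+\frac{(n-1)(a+b-1)^{2}}{2},
\]
so $d\le C'\sqrt{n}$ for a constant $C'=C'(a,b)$ depending only on the parameters of $H$. Multiplying by $n/2$ yields $m=nd/2\le Cn^{3/2}$ with $C=C'/2$, which is the desired bound. This is precisely the $r=2$ specialization of Theorem~\ref{alonturan} already noted in the text following its statement, so no further work is needed.

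There is no real obstacle here; the only sanity check is that the degree-at-most-$2$ hypothesis on $B$ matches the hypothesis of Theorem~\ref{alonturan} with $r=2$, and that the constant $C$ is indeed allowed to depend on $H$ (i.e.\ on $a,b$) but not on $n$. Both are immediate from the displayed inequality above.
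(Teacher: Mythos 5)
Your proof is correct and matches the paper's own justification: the paper derives this lemma exactly by specializing Theorem~\ref{alonturan} to $r=2$ (as noted in the sentence following that theorem), which is precisely the contrapositive-and-rearrange argument you give. The constant $C$ depends only on $a$ and $b$, i.e.\ only on $H$, so nothing further is needed.
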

The following is a result of Conlon and Lee which implies that equality occurs in Lemma \ref{extremal containing C4} if and only if $H$ contains a $C_4$. We record this as it appears in Theorem 1.3 of \cite{conlon2021extremal}.
\begin{lemma}
\cite{conlon2021extremal}
\label{bounds on turan number for 1-subdivisions}
For any bipartite graph $H$ with maximum degree two on one side containing no $C_4$, there exist positive constants $C$ and $\delta$ such that 
\[\mathrm{ex}(n, H) = Cn^{3/2 - \delta}.\]
\end{lemma}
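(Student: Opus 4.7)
The plan, since this is a cited theorem of Conlon and Lee that the present paper uses only as a black box, is to sketch the strategic ideas rather than attempt a full argument. Write $H = (A \cup B, F)$ with $|A| = a$, $|B| = b$, and $\deg_H(\beta) \le 2$ for all $\beta \in B$. I would argue by contradiction: assume $G$ is $H$-free on $n$ vertices with $e(G) > C n^{3/2 - \delta}$ for a suitably small $\delta > 0$ and large constant $C = C(H)$, and derive a contradiction by explicitly embedding $H$ into $G$.

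The first step would be a dependent random choice argument. One samples a small random tuple of vertices in $G$ and considers their common neighborhood $U$. Because $e(G)$ exceeds the K\H{o}v\'ari--S\'os--Tur\'an threshold $n^{3/2}$ by a polynomial factor, a standard dependent random choice calculation yields with positive probability a set $U$ of size $|U| = n^{\Omega(1)}$ in which a large fraction of $a$-tuples have common $G$-neighborhood of size polynomially large in $n$.

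Next, I would embed $A$ into $U$ via an arbitrary injection $\phi$. Each $\beta \in B$ must then be placed in the intersection $\bigcap_{\alpha \sim_H \beta} N_G(\phi(\alpha))$, which has at most two terms. The no-$C_4$ hypothesis on $H$ is the decisive input here: it ensures that distinct degree-$2$ vertices of $B$ specify distinct pairs of neighbors in $A$, so the codegree sets used for embedding are pairwise distinct, and a greedy argument completes the embedding as long as each such codegree exceeds $|B|$.

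The main obstacle will be making the polynomial saving $\delta > 0$ rigorous: a direct application of dependent random choice only recovers the K\H{o}v\'ari--S\'os--Tur\'an-type bound $O(n^{3/2})$ already established in Lemma \ref{extremal containing C4}. To obtain the strict improvement, one has to exploit the no-$C_4$ assumption on $H$ more quantitatively---typically by iterating dependent random choice on successively sparsified hosts, and combining this with a supersaturation or stability statement showing that near-extremal $C_4$-free bipartite hosts themselves contain dense ``books'' that force copies of $H$. This quantitative step, rather than the embedding scheme above, is the technical heart of Conlon and Lee's argument in \cite{conlon2021extremal}, and would be the place where any self-contained write-up demands genuine new work beyond the sketch given here.
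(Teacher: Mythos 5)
This lemma is not proved in the paper at all: it is quoted verbatim as Theorem 1.3 of Conlon and Lee \cite{conlon2021extremal} and used as a black box, so there is no internal argument to compare your proposal against. Judged as a proof attempt, your write-up is a sketch with a genuine and self-acknowledged gap precisely at the point that constitutes the entire content of the lemma. The dependent random choice embedding you outline (sample vertices, pass to a set $U$ with large codegrees, map $A$ into $U$, place each vertex of $B$ greedily in the common neighborhood of at most two images) is the standard argument that yields $\mathrm{ex}(n,H)=O(n^{3/2})$, i.e.\ Lemma \ref{extremal containing C4}; it does not, and cannot by itself, produce the polynomial saving $n^{-\delta}$. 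Saying that one should ``exploit the no-$C_4$ assumption more quantitatively'' and ``iterate on sparsified hosts'' names the difficulty without resolving it, so the key step of Lemma \ref{bounds on turan number for 1-subdivisions} remains unproved in your proposal. (As a smaller point, even the greedy placement of $B$ needs care: distinct vertices of $B$ with the same pair of $H$-neighbors are exactly what the no-$C_4$ hypothesis rules out, but you still must ensure the chosen images in the codegree sets are distinct from each other and from $\phi(A)$, which costs only an additive $|A|+|B|$ but should be stated.)

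Since the surrounding paper only ever uses the upper-bound direction (and indeed the displayed equality $\mathrm{ex}(n,H)=Cn^{3/2-\delta}$ should be read as $O(n^{3/2-\delta})$), the appropriate treatment here is the one the paper takes: cite \cite{conlon2021extremal} rather than reprove it. If you do want a self-contained argument, the missing ingredient is Conlon and Lee's actual mechanism for the exponent gain, which is substantially more delicate than dependent random choice and is not recoverable from the sketch you gave.
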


In particular, if we apply Lemma \ref{bounds on turan number for 1-subdivisions} to intersecting even cycles without any $C_4$, we get the following lemma.
\begin{lemma}
\label{extremal not containing C4}
For any intersecting cycle $C_{2k_1, 2k_2, \ldots, 2k_t}$ containing no $C_4$, there exist positive constants $C$ and $\delta$ such that
\begin{equation}
    \mathrm{ex}(n, C_{2k_1, 2k_2, \ldots, 2k_t}) = Cn^{3/2 - \delta}.
\end{equation}
\end{lemma}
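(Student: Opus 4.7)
The plan is to apply Lemma \ref{bounds on turan number for 1-subdivisions} directly to $H := C_{2k_1, 2k_2, \ldots, 2k_t}$, once we verify that this intersecting even cycle satisfies the two hypotheses of that lemma: namely, $H$ is bipartite with maximum degree at most $2$ on one side, and $H$ is $C_4$-free. The second condition is given by assumption, so the whole argument reduces to a quick structural check of $H$.

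First I would produce the bipartition of $H$ explicitly. Since every constituent cycle $C_{2k_i}$ is even and the $t$ cycles pairwise share only the central vertex $v$, one can properly $2$-color each cycle with $v$ receiving the same color in every coloring, and the resulting partition $V(H) = A \sqcup B$ with $v \in A$ is a valid bipartition. By Lemma \ref{intersectiong circuits in large complete bipartite graphs} (or a direct count), $|A| = \kappa + 1$ and $|B| = \kappa + t$.

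Next I would check the degree condition. Every vertex $u \neq v$ lies on exactly one cycle $C_{2k_i}$, where it has exactly two neighbors, so $d_H(u) = 2$. Only the center $v$ has larger degree, namely $d_H(v) = 2t$. Since $v$ is the unique vertex of $A$ that can have degree exceeding $2$, and $v \notin B$, every vertex in $B$ has degree exactly $2$. Hence $H$ is bipartite with maximum degree at most $2$ on the side $B$, as required.

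With these two facts in hand, Lemma \ref{bounds on turan number for 1-subdivisions} applies to $H$ and yields positive constants $C$ and $\delta$ with $\mathrm{ex}(n, H) = C n^{3/2 - \delta}$, which is exactly the claimed statement. There is no real obstacle here: the lemma is essentially a direct corollary of the cited Conlon–Lee theorem, and the only verification needed is the bipartite-plus-bounded-degree structure of intersecting even cycles, which is immediate from the definition. If one wanted to be slightly more careful, the only subtlety to flag is that the cited equality in Lemma \ref{bounds on turan number for 1-subdivisions} should be interpreted in the sense of the matching upper and lower bounds established in \cite{conlon2021extremal}, so that $\delta > 0$ is genuinely positive and the exponent is strictly below $3/2$.
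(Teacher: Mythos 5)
Your proposal is correct and follows essentially the same route as the paper: the paper obtains Lemma \ref{extremal not containing C4} by applying Lemma \ref{bounds on turan number for 1-subdivisions} (the Conlon--Lee result) directly to $C_{2k_1, 2k_2, \ldots, 2k_t}$, and your explicit verification that the intersecting even cycle is bipartite with all degrees at most $2$ on the side not containing the center is exactly the (unstated) check the paper relies on.
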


Next we determine bounds for the spectral radius of a spectral extremal graph $G$.

\begin{lemma}\label{bounds on lambda}
$\sqrt{\kappa  n} \leq \dfrac{\kappa -1 + \sqrt{(\kappa -1)^2 + 4\kappa (n-\kappa )}}{2} \leq \lambda(G) \leq \sqrt{(4\kappa + t)(n-1)} < \sqrt{5\kappa n}$.
\end{lemma}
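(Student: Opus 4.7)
The plan is to establish the four inequalities in order, working outward from the middle. Set $\mu:=\frac{\kappa-1+\sqrt{(\kappa-1)^2+4\kappa(n-\kappa)}}{2}$, which satisfies $\mu^2=(\kappa-1)\mu+\kappa(n-\kappa)$. Using the equitable partition of $S_{n,\kappa}=K_\kappa\vee(n-\kappa)K_1$ into its clique and independent set, the $2\times 2$ quotient matrix has characteristic polynomial $\lambda^2-(\kappa-1)\lambda-\kappa(n-\kappa)$, so $\mu=\lambda(S_{n,\kappa})$. The first inequality $\sqrt{\kappa n}\le\mu$ is then purely algebraic: squaring and substituting the quadratic identity reduces it to $(\kappa-1)\mu\ge\kappa^2$, which holds for $n$ sufficiently large because $\mu\to\infty$ with $n$.

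For the middle inequality $\mu\le\lambda(G)$, I will show that $S_{n,\kappa}$ is itself $C_{2k_1,\ldots,2k_t}$-free, so that the spectral extremality of $G$ immediately forces $\lambda(G)\ge\lambda(S_{n,\kappa})=\mu$. The key observation is that no cycle in $S_{n,\kappa}$ can contain two consecutive vertices of the independent part (since that part has no edges), so every constituent $2k_i$-cycle uses at least $k_i$ clique vertices. The $t$ cycles of an intersecting copy share only their centre, so whether the centre lies in the clique or in the independent set, the total number of distinct clique vertices used is at least $\kappa+1$, exceeding the $\kappa$ available in $S_{n,\kappa}$.

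For the upper bound I would use the Perron vector $\mathrm{x}$, normalised so that $\mathrm{x}_z=1$ at a maximum-entry vertex $z$, and expand the eigenvalue equation twice:
\begin{align*}
\lambda^2 &= \lambda^2 \mathrm{x}_z = \sum_{u\in N_1(z)}\sum_{w\in N_1(u)}\mathrm{x}_w \\
&\le \sum_{u\in N_1(z)}d(u) = 2e(N_1(z))+e(N_1(z),N_2(z))+d(z).
\end{align*}
Because $G$ is $C_{2k_1,\ldots,2k_t}$-free, the subgraph on $N_1(z)\cup N_2(z)$ cannot contain $t$ disjoint paths of orders $2k_i'+1$ with both endpoints in $N_1(z)$, as otherwise such paths together with pairs of edges through $z$ would yield the forbidden intersecting cycle. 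Lemma \ref{Many edges in a bipartition II} therefore gives $2e(N_1(z))+e(N_1(z),N_2(z))\le\frac{4\kappa+t-2}{2}(d(z)+n-1)$, exactly as in the proof of Theorem \ref{theorem degree squares}, and combining with $d(z)\le n-1$ produces $\lambda^2\le(4\kappa+t-1)(n-1)<(4\kappa+t)(n-1)$.

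Finally, $(4\kappa+t)(n-1)<5\kappa n$ rearranges to $(\kappa-t)n+(4\kappa+t)>0$, which is immediate from $\kappa=\sum_i(k_i-1)\ge t$ (since each $k_i\ge 2$). I expect the main obstacle to be the upper bound step: one must notice that the standard trick $\lambda^2\le\sum_{u\in N_1(z)}d(u)$ at a Perron-maximum vertex couples cleanly with the same Turán-type edge count on $N_1(z)\cup N_2(z)$ that drives Theorem \ref{theorem degree squares}, so that no genuinely new combinatorial ingredient is needed beyond what has already been established.
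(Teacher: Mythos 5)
Your proof is correct and follows essentially the same route as the paper: the lower bound comes from $\lambda(S_{n,\kappa})$ (you additionally make explicit the quotient-matrix computation and the check that $S_{n,\kappa}$ is $C_{2k_1,\ldots,2k_t}$-free, which the paper leaves implicit), and the upper bound comes from the second-degree eigenvalue expansion combined with Lemma \ref{Many edges in a bipartition II}, with your Perron-entry formulation at $z$ playing the same role as the paper's maximum row sum of $A^2$. The only caveat is that your reduction of the first inequality to $(\kappa-1)\mu\ge\kappa^2$ needs $\kappa\ge 2$, but this matches the regime the lemma is actually used in (for $\kappa=1$ the stated inequality itself fails, and that case is handled by prior work on $C_4$).
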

\begin{proof}
The lower bound is obtained from the spectral radius of $S_{n,\kappa}$. The upper bound is obtained by using the second-degree eigenvalue-eigenvector equation along with  Lemma \ref{bounds on turan number for 1-subdivisions}:

Let $u \in V(G)$. We use Lemma \ref{Many edges in a bipartition II} over the graph $G[N_1(u) \cup N_2(u)]$ with $U = N_1(u)$ and $W = N_2(u)$. We know that $|N_1(u)| = d(u)$. Also, there cannot be $t$ disjoint paths on $2k_i'+1$ vertices, for $1 \le i \le t$ in $G[N_1(u) \cup N_2(u)]$ with both end points in $N_1(u)$, else $G[\{u\} \cup N_1(u) \cup N_2(u)]$ contains a $C_{2k_1, 2k_2, \ldots, 2k_t}$ with center $u$. 
So \eqref{eqn Many edges in a bipartition B} implies that 
\begin{equation}
\label{eqn precursor to bounds on spectral radius using bipartition edges}
\begin{split}
2e(N_1(u)) + e(N_1(u), N_2(u)) &\le \frac{\left(4\kappa +t -2\right)}{2}(2 |N_1(u)| + |N_2(u)|)\\ 
&\le (2\kappa + t/2 -1)(d(u) + n - 1).    
\end{split}    
\end{equation}
The spectral radius of a non-negative matrix is at most the maximum of the row-sums of the matrix. Applying this result for $A^2(G)$ and its spectral radius $\lambda^2$ and using \eqref{eqn precursor to bounds on spectral radius using bipartition edges}, we obtain that
\begin{equation}
\begin{split}
\lambda^2 &\leq \max_{u \in V(G)} \big\{\sum_{w\in V(G)}\ A^2_{u,w} \big\} = \max_{u \in V(G)}\big\{\sum_{v \in N(u)}d(v)\big\}\\ &= \max_{u \in V(G)}\big\{d(u) + 2e(N_1(u)) + e(N_1(u), N_2(u))\}\\
&\leq (2\kappa + t/2)\left(\max_{u \in V(G)}\big\{d(u)\big\}\right) + (2\kappa + t/2 -1)(n - 1)\\
&< (4\kappa + t)(n-1).
\end{split}
\end{equation}
Thus, $\lambda < \sqrt{(4\kappa + t)(n-1)} < \sqrt{5\kappa n}$, since $t \le \sum_{i=1}^t k_i' = \kappa$.
\end{proof}

We will now determine upper bounds for the number of vertices in the sets $L$ and $M$ that consist of vertices with relatively large weights.  
\begin{lemma}\label{lemma:LM}
There exists a positive constant $\delta$ such that $|L| \le \dfrac{n^{1 - \delta}}{\alpha}$ and $|M| \le \dfrac{4n^{1 - \delta}}{\alpha}$.
\end{lemma}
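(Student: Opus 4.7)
The plan is to turn the eigenvalue equation into a lower bound on the degree of every vertex with large Perron entry, and then cap the number of such vertices using the sublinear Tur\'an-type estimate from earlier in this section. First, for every $v \in L$ the eigenvalue equation gives $\lambda \rx_v = \sum_{u \sim v} \rx_u \le d(v) \cdot \max_u \rx_u = d(v)$, since $\rx$ is normalized so that its largest entry is $1$. Combined with $\rx_v \ge \alpha$ and the lower bound $\lambda \ge \sqrt{\kappa n}$ from Lemma \ref{bounds on lambda}, this yields $d(v) \ge \alpha\sqrt{\kappa n}$ for every $v \in L$. The same reasoning, with $\rx_v \ge \alpha/4$ in place of $\rx_v \ge \alpha$, shows that every $v \in M$ satisfies $d(v) \ge \alpha\sqrt{\kappa n}/4$.

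Next I would sum these degree lower bounds and invoke the Tur\'an bound for $C_{2k_1,\ldots,2k_t}$-free graphs. Using that $G$ is $C_{2k_1,\ldots,2k_t}$-free,
\[
|L|\,\alpha\sqrt{\kappa n} \;\le\; \sum_{v \in L} d(v) \;\le\; 2\,e(G) \;\le\; 2\,\rex(n, C_{2k_1,\ldots,2k_t}),
\]
and analogously $|M|\,\alpha\sqrt{\kappa n}/4 \le 2\,e(G)$. Because $C_{2k_1,\ldots,2k_t}$ is bipartite with every vertex in the color class of size $\kappa + t$ of degree exactly $2$, Lemma \ref{extremal not containing C4} improves the trivial $O(n^{3/2})$ estimate of Lemma \ref{extremal containing C4} to $\rex(n, C_{2k_1,\ldots,2k_t}) \le C n^{3/2 - \delta_0}$ for some positive constant $\delta_0$. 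Dividing through gives $|L| \le \frac{2C}{\alpha\sqrt{\kappa}}\, n^{1-\delta_0}$ and $|M| \le \frac{8C}{\alpha\sqrt{\kappa}}\, n^{1-\delta_0}$, which for $n$ sufficiently large lie below $n^{1-\delta}/\alpha$ and $4n^{1-\delta}/\alpha$ respectively, for any fixed $\delta \in (0,\delta_0)$.

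The main obstacle is that the sublinear improvement in Lemma \ref{extremal not containing C4} requires the forbidden intersecting cycle to contain no $C_4$, i.e.\ $k_1 \ge 3$. When $k_1 = 2$, the best Tur\'an bound available in this section is the linear-in-$n^{3/2}$ estimate of Lemma \ref{extremal containing C4}, and the naive summation above only yields $|L| = O(n)$, not a sublinear bound. Handling this case requires a separate structural argument: combine the degree-squares estimate of Theorem \ref{theorem degree squares} with a pigeonhole over pairs of vertices in $L$, showing that a large $L$ forces many pairs of $L$-vertices to share many common neighbors, so that when $k_1 = 2$ a single sufficiently ``rich'' center already produces $t$ vertex-disjoint $C_4$'s through it and thus a copy of the forbidden $C_{2k_1,\ldots,2k_t}$. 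This combinatorial leveraging of the forbidden intersecting-cycle structure is the piece of the proof that genuinely requires work, and it yields a positive constant $\delta$ uniformly in all cases.
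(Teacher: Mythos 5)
Your first half is exactly the paper's Case 1: the eigenvalue equation gives $d(v)\ge \lambda \rx_v \ge \alpha\sqrt{\kappa n}$ for $v\in L$ (and $\ge \alpha\sqrt{\kappa n}/4$ for $v\in M$), and summing degrees against the sublinear Tur\'an bound of Lemma \ref{extremal not containing C4} yields the claim when $C_{2k_1,\ldots,2k_t}$ contains no $C_4$. That part is correct and matches the paper.

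The genuine gap is your treatment of the case $k_1=2$, which is where the lemma actually requires work. Two problems. First, your proposed contradiction --- that a ``rich'' center yields $t$ vertex-disjoint $C_4$'s through it --- only produces $C_{4,4,\ldots,4}$, which is \emph{not} the forbidden graph when some $k_i\ge 3$ (e.g.\ $C_{4,10}$); so in the mixed case no contradiction is reached and the argument collapses. Second, even when all $k_i=2$, you give no mechanism by which ``degree squares plus pigeonhole over pairs'' bounds $|L|$ by $n^{1-\delta}$: with $e(G)=\Theta(n^{3/2})$ and every $L$-vertex only guaranteed degree $\Theta(\sqrt n)$, the degree-sum argument permits $|L|=\Theta(n)$, and Theorem \ref{theorem degree squares} only controls vertices of \emph{linear} degree, which vertices of $L$ need not have at this stage. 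The paper's actual argument is different and more delicate: assuming $|L|>n^{3/4}$, it splits $L$ into the vertices of degree at least $2Cn^{3/4}$ (at most $n^{3/4}$ of them, by the edge count) and the remaining set $\bbL$ of large-weight, small-degree vertices, and then kills $\bbL$ by an iterative Perron-weight threshold argument: for $v\in\bbL^{\gamma-\sigma}$ the second-degree eigenvalue equation forces
\begin{equation*}
e\bigl(N_1(v),\,N_2(v)\setminus(\bbL_2^{\gamma}\cup\mathcal{L}_2)\bigr)\ \ge\ \tfrac{2\kappa-0.6}{2}\,n,
\end{equation*}
so Erd\H{o}s--Gallai (Lemma \ref{extremal path}) and Lemma \ref{long paths in bipartite subgraph} give, inductively and after deleting the constantly many vertices already used, $t$ \emph{disjoint} paths $P_{2k_i'+1}$ with endpoints in $N_1(v)$; together with $v$ these form a genuine $C_{2k_1,\ldots,2k_t}$ with cycles of the correct lengths, a contradiction. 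Iterating over the thresholds $\gamma,\gamma-\sigma,\ldots,\alpha$ shows $\bbL=\emptyset$, hence $|L|\le n^{3/4}$. Your proposal would need this (or an equivalent) construction of the forbidden graph with the correct cycle lengths, plus an argument excluding large-weight vertices of small degree, before the $k_1=2$ case can be considered proved.
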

\begin{proof}
We will break the proof into two cases depending on whether or not there exists a positive constant $\delta$ such that $\mathrm{ex}(n, C_{2k_1, \ldots 2k_t}) \le \dfrac{\sqrt{\kappa}}{2}n^{3/2 - \delta}$ (that is, depending on whether or not $C_{2k_1, \ldots, 2k_t}$ contains a $C_4$). The proof for the second case also proves the first case without any $C_4$. However we include both proofs to exhibit the need for more delicate techniques when $\mathrm{ex}(n,F) = \Theta(n^{3/2})$. 

\begin{case}
$\mathrm{ex}(n, C_{2k_1, \ldots, 2k_t}) \le \dfrac{\sqrt{\kappa}}{2}n^{3/2 - \delta}$. 
\end{case}
For any $v \in V$, we have that 
\begin{equation}
\label{eigenvalue-eigenvector equation}
    \lambda \mathrm{x}_v = \sum_{u \sim v} \mathrm{x}_u \leq d(v).
\end{equation}
 Combining this with the lower bound in Lemma \ref{bounds on lambda} gives $\sqrt{\kappa n} \mathrm{x}_v \leq \lambda \mathrm{x}_v \leq d(v)$. Summing over all vertices $v \in L$ gives
\begin{equation}
 |L|\sqrt{\kappa n}\alpha \leq \sum_{v\in L} d(v) \leq \sum_{v \in V} d(v) \leq 2 e(G) \leq 2 \mathrm{ex}(n, C_{2k_1, 2k_2, \ldots, 2k_t}) \leq \sqrt{\kappa}n^{3/2 - \delta}.
\end{equation}
Thus, $|L| \leq \dfrac{n^{1 - \delta}}{\alpha}$. A similar argument shows that $|M| \leq \dfrac{4n^{1 - \delta}}{\alpha}$.

\begin{case}
$\mathrm{ex}(n, C_{2k_1, \ldots, 2k_t}) = \Theta(n^{3/2})$.    
\end{case}

Let $G \in \mathrm{SPEX}(n, C_{2k_1, 2k_2, \ldots, 2k_t})$ where $2 = k_1 \le k_2 \le \ldots \le k_t$. If $t = 1$, then $C_{2k_1} = C_4$ since we have assumed that $\rex(n, C_{2k_1}) = \Theta(n^{3/2})$. Also, It is known from \cite{Nikiforovpaths, ZW} that $G \cong F_{n, 1}$. Hence, $|L| = 1$ and we are done. In what follows we will assume that $t \ge 2$.
We will show that $|L| \le n^{3/4} < \dfrac{n^{1-\delta}}{\alpha}$.

Assume to the contrary that $|L| > n^{3/4}$. Every vertex in $L$ has degree at least $\sqrt{\kappa n} \alpha$. 
We know by our observations in \eqref{AKS bounds} and Lemma \ref{extremal containing C4} that there is some positive constant $C$ such that $e(G) \le \mathrm{ex}(n, C_{2k_1, 2k_2, \ldots, 2k_t}) \le Cn^{3/2}$. Thus, at most $n^{3/4}$ vertices in $L$ have degree at least $2Cn^{3/4}$ and consequently, at least $|L|- n^{3/4}$ vertices of $L$ have degrees less than $2Cn^{3/4}$. Let $\mathbb{L}$ denote this subset of vertices in $L$ with degrees less than $2Cn^{3/4}$ and $\mathcal{L}$ the set of remaining vertices in $L$ with degrees at least $2Cn^{3/4}$. If $|L| > n^{3/4}$, we will show a contradiction by proving that $\mathbb{L} = \emptyset$. To this end, let us assume there is a vertex $v \in \mathbb{L}$. Then $d(v)  < 2Cn^{3/4}$. Now let $H$ denote the graph with vertex set $N_1(v) \cup N_2(v)$ and  
$E(H) = E(N_1(v), N_2(v))$. To make it easier to read, we will use $N_i, S_i, L_i, \mathbb{L}_i$ and $\mathcal{L}_i$ to denote the set of vertices in $V(G), S, L, \mathbb{L}$ and $\mathcal{L}$, respectively, at distance $i$ from $v$.
Then \begin{equation}
    \begin{split}
        \kappa n \alpha \le \kappa n \rx_v &\le \lambda^2 \rx_v = \sum_{u \sim v}\sum_{w \sim u} \mathrm{\bx}_w \le d(v)\rx_v + 2e(N_1) + \sum_{u \sim v}\sum_{\substack{w \sim u\\ w \in N_2}} \rx_w\\
        &\le d(v) + 2e(N_1) + e(N_1, \mathcal{L}_2) + \sum_{u \sim v}\sum_{\substack{w \sim u\\ w \in N_2 \setminus \mathcal{L}_2}} \rx_w\\
        &\le |N_1| + \frac{(4\kappa + t - 2)}{2}(|N_1| + (|N_1| + |\mathcal{L}_2|)) + \sum_{u \sim v}\sum_{\substack{w \sim u\\ w \in N_2 \setminus \mathcal{L}_2}} \rx_w\\
        &\le C_1 n^{3/4} + \sum_{u \sim v}\sum_{\substack{w \sim u\\ w \in N_2 \setminus \mathcal{L}_2}} \rx_w,
    \end{split}
\end{equation}
for some positive constant $C_1 \ge \frac{(4\kappa + t)}{2}(4C+1)$.

Thus, $e(N_1, N_2 \setminus \mathcal{L}_2) \ge  \kappa n \rx_v -  C_1 n^{3/4} \ge (\kappa-\epsilon_1)n \rx_v$, where we may take $\epsilon_1 > 0$ as small as required by taking $n$ sufficiently large.

For any constant $\gamma$, let $\bbL^\gamma$ denote the set of vertices $\{v \in \bbL \mid \mathrm{\bx}_v \ge \gamma\}$.  Further, we use $\bbL^\gamma_i$ to denote the set of vertices in $\bbL^{\gamma}$ at distance $i$ from $v$. Finally, set $\sigma : = \dfrac{\alpha}{4\kappa}$. 

In the following, we will show that $|\bbL^{\gamma}| = 0$ for all $\gamma \ge \alpha.$  First we will show that if $|\bbL^{\gamma}| = 0$ for some $\gamma$, then $|\bbL^{\gamma - \sigma}| = 0$. Clearly, since $|\bbL^{\gamma}| = 0$ for all $\gamma > 1$, this will inductively give us that $\bbL^{\alpha} = \bbL = \emptyset$.
To do this we will show the existence of disjoint paths $P_{2k_i'+1}$ for $1\le i \le t$ in $H$, with both end points in $N_1$.

Given $\bbL^{\gamma} = \emptyset$, recall that for $v \in \bbL^{\gamma - \sigma}$, 
\begin{equation}
    \begin{split}
        \kappa n (\gamma - \sigma) &\le \kappa n \rx_v \le \lambda^2 \rx_v = \sum_{u \sim v}\sum_{w \sim u} \rx_w \le C_1 n^{3/4} + \sum_{u \sim v}\sum_{\substack{w \sim u\\ w \in \bbL_2^{\gamma}}} \rx_w + \sum_{u \sim v}\sum_{\substack{w \sim u\\ w \in N_2 \setminus (\bbL_2^{\gamma} \cup \mathcal{L}_2})} \rx_w\\
        &\le C_1 n^{3/4} + \sum_{u \sim v}\sum_{\substack{w \sim u\\ w \in N_2 \setminus (\bbL_2^{\gamma} \cup \mathcal{L}_2)}} \gamma =  C_1 n^{3/4} + e(N_1, N_2 \setminus (\bbL_2^{\gamma} \cup \mathcal{L}_2))\gamma.
    \end{split}
\end{equation}

Since $\sigma : = \dfrac{\alpha}{4\kappa}$,
\begin{equation}
    \begin{split}
      \left( \dfrac{2\kappa - 0.5}{2}\right)n = \kappa n \left(1 - \dfrac{1}{4\kappa}\right) \le
       \kappa n \left(1 - \dfrac{\sigma}{\gamma}\right) = \kappa n \left(\dfrac{\gamma - \sigma}{\gamma}\right) \le C_2 n^{3/4} + e(N_1, N_2 \setminus (\bbL_2^{\gamma} \cup \mathcal{L}_2))
    \end{split}
\end{equation}

for some positive constant $C_2 = C_1/\gamma$.
For $n$ sufficiently large, this gives 
\begin{equation}
    \begin{split}
      \dfrac{(2\kappa - 0.6)}{2}n \le e(N_1, N_2 \setminus (\bbL_2^{\gamma} \cup \mathcal{L}_2)).
    \end{split}
\end{equation}
Thus, by Lemma \ref{extremal path} there is a $P_{2\kappa + 1}$ in $H$, and therefore a $P_{2\kappa - 1}$ in $H$ with both end points in $N_1$. 

Since $\kappa = \sum_{i=1}^t k_i'$, $k_1' = 1$, and $t \ge 2$, it implies that $k_t' + 1 \le \kappa$. Thus, $2k_t' + 1 \le 2\kappa - 1$, and there must exist $P_{2k_i' + 1}$ in $H$ with both end points in $N_1$ for all $1 \le i \le t$. It remains to show that we can find disjoint paths $P_{2k_i' + 1}$ in $H$ with both end points in $N_1$ for all $1 \le i \le t$.

To do this we proceed inductively. Say we are able to show the existence of the first $j$ disjoint paths,  $P_{2k_i' + 1}$ where $1 \le i \le j$ for some $j$ such that $1 \le j < t$, in $H$ with both end points in $N_1$. We will next show by induction that there is a disjoint $P_{2k_{j+1}' + 1}$ as well, oriented such that both its end points lie in $N_1$. Let $H_{j+1}$ be the induced subgraph of $H$ with vertex set $V(H) \setminus (\mathcal{L}_2 \cup_{i=1}^j V(P_{2k'_i+1}))$. Now, for some positive constant $c$, if $W$ is any subset of $V(H)$ satisfying $|W| \le c$, then $\sum_{u \in W}\sum_{w \sim u} \rx_w = \sum_{u \in W} \lambda \rx_u \le c\lambda$. Consequently,
\begin{equation}
\begin{split}
        \kappa n (\gamma - \sigma) & \le \lambda^2 \rx_v = \sum_{u \sim v}\sum_{w \sim u} \mathrm{\bx}_w \le  C_1 n^{3/4} + 
         \sum_{u \sim v}\sum_{\substack{w \sim u\\ w \in N_2 \setminus (\bbL_2^{\gamma} \cup \mathcal{L}_2})} \rx_w\\
& \le C_1 n^{3/4} + \sum_{\substack{u \sim v \\ u \in W}}\sum_{\substack{w \sim u\\ w \in N_2 \setminus (\bbL_2^{\gamma} \cup \mathcal{L}_2})} \rx_w + \sum_{\substack{u \sim v\\ u \not\in W}}\sum_{\substack{w \sim u, w \in W\\ w \in N_2 \setminus (\bbL_2^{\gamma} \cup \mathcal{L}_2)}} \rx_w + \sum_{\substack{u \sim v \\ u \not\in W}}\sum_{\substack{w \sim u, w \not\in W\\ w \in N_2 \setminus (\bbL_2^{\gamma} \cup \mathcal{L}_2)}} \rx_w \\
& \le C_1 n^{3/4} + c \lambda + c(2Cn^{3/4}) \gamma + \sum_{\substack{u \sim v \\ u \not\in W}}\sum_{\substack{w \sim u, w \not\in W\\ w \in N_2 \setminus (\bbL_2^{\gamma} \cup \mathcal{L}_2)}} \gamma\\
& \le C_3 n^{3/4} + \sum_{\substack{u \sim v \\ u \not\in W}}\sum_{\substack{w \sim u, w \not\in W\\ w \in N_2 \setminus (\bbL_2^{\gamma} \cup \mathcal{L}_2)}} \gamma,
\end{split}
\end{equation}  
for some positive constant $C_3$ and $n$ taken to be sufficiently large. In particular, if $W$ is the set of vertices of $V(H)$, $W = \cup_{i=1}^j V(P_{2k'_j + 1})$ and $c = \sum_{i=1}^j (2k_i' + 1)$, we have 
\[\kappa n (\gamma - \sigma) \le  C_3 n^{3/4} + \sum_{\substack{u \sim v \\ v \not\in W}}\sum_{\substack{w \sim v, w \not\in W\\ w \in N_2 \setminus (\bbL_2^{\gamma} \cup \mathcal{L}_2)}} \gamma = C_3 n^{3/4} + e(H_{j+1})\gamma.\]

Thus, $e(H_{j+1}) > \dfrac{2\kappa -1}{2}n$ and there must exist a path $P_{2k_{j+1}' + 1}$ in $H_{j+1}$ with both end points in $N_1$. By induction, there exist all $t$ disjoint paths of lengths $2k_j'$ in $H$ with both end points in $N_1$, for all $1 \le j \le t$. These disjoint paths create the intersecting even cycle $C_{2k_1, 2k_2, \ldots, 2k_t}$ along with $v$, the central vertex. This is a contradiction, hence $\bbL^{\gamma -\delta} = \bbL^{\gamma} = \emptyset$. Proceeding similarly, we can show that $\bbL^{\gamma -\delta} = \bbL^{\gamma -2\delta} = \ldots = \bbL^{\alpha} (= \bbL)$, thus $|\bbL| = 0$, a contradiction. Therefore, we must have that $|L| \le n^{3/4} < \dfrac{n^{1-\delta}}{\alpha}$. Similarly, we can prove that $|M| \le \dfrac{4n^{1-\delta}}{\alpha}$.

\end{proof}

In our final lemma for this section, we determine lower bounds for the Perron entries in terms of the spectral radius of $G$.

\begin{lemma}
\label{G is connected}
The graph $G$ is connected and for any vertex $v \in V(G)$, we have that 
\begin{equation*}
\mathrm{x}_v \geq \dfrac{1}{\lambda(G)} > \dfrac{1}{\sqrt{5\kappa n}}.
\end{equation*}
\end{lemma}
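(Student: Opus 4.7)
The plan is to prove both assertions by contradiction using a standard local modification technique: in each case I would construct a graph $G'$ on the same vertex set which is still $C_{2k_1, \ldots, 2k_t}$-free but has strictly larger spectral radius than $G$, contradicting the extremality of $G$.

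For connectedness, I would let $G_1$ denote the component of $G$ containing the vertex $z$ (with $\mathrm{x}_z = 1$). Since $\mathrm{x}_z > 0$, the restriction of $\mathrm{x}$ to $V(G_1)$ is a nonnegative eigenvector of $A(G_1)$ with eigenvalue $\lambda(G)$, so $\lambda(G_1) = \lambda(G)$. I would then construct $G'$ on $V(G)$ by keeping $G_1$ untouched and, for every vertex $u \notin V(G_1)$, deleting all of $u$'s edges in $G$ and inserting the single edge $uz$. Every vertex outside $G_1$ is now a pendant in $G'$, so every cycle of $G'$ lies inside $G_1 \subseteq G$; hence $G'$ is $C_{2k_1, \ldots, 2k_t}$-free. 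Testing with the vector equal to $\mathrm{x}$ on $V(G_1)$ and a small $\epsilon > 0$ on each outside vertex yields a Rayleigh quotient strictly exceeding $\lambda(G_1) = \lambda(G)$, giving the desired contradiction.

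For the lower bound on Perron entries, I would now work with the (already established) connected $G$ and suppose toward a contradiction that some $v$ satisfies $\mathrm{x}_v < 1/\lambda(G)$. Construct $G'$ by deleting every edge of $G$ incident to $v$ and adding the single edge $vz$; then $v$ has degree one in $G'$, so $G'$ is $C_{2k_1, \ldots, 2k_t}$-free (every cycle of $G'$ is a cycle of $G - v \subseteq G$). Using $\mathrm{x}$ as a test vector and the eigenvalue equation $\lambda(G)\mathrm{x}_v = \sum_{u \in N_G(v)} \mathrm{x}_u$ gives
\[
\mathrm{x}^{\top} A(G')\, \mathrm{x} - \mathrm{x}^{\top} A(G)\, \mathrm{x} = 2\mathrm{x}_v \Bigl(\mathrm{x}_z - \sum_{u \in N_G(v)} \mathrm{x}_u\Bigr) = 2 \mathrm{x}_v \bigl(1 - \lambda(G)\, \mathrm{x}_v\bigr) > 0,
\]
so by the Rayleigh principle $\lambda(G') > \lambda(G)$, contradicting the extremality of $G$. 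Hence $\mathrm{x}_v \geq 1/\lambda(G)$ for every $v \in V(G)$, and the strict final inequality follows immediately from the upper bound $\lambda(G) < \sqrt{5\kappa n}$ recorded in Lemma \ref{bounds on lambda}.

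The main subtlety, I expect, is in the disconnected case: to guarantee that $\lambda(G') > \lambda(G)$ one must first localize $\mathrm{x}$ to a single component, namely the one containing $z$, so that the new pendant edges actually ``see'' positive Perron weight. Once this is set up, both parts of the lemma reduce to a one-line Rayleigh-quotient comparison, and the $C_{2k_1, \ldots, 2k_t}$-freeness of the modified graphs is immediate from the observation that pendant edges cannot participate in any cycle.
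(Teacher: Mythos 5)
Your proposal is correct and follows essentially the same strategy as the paper: in both parts you perform the same local surgery (delete a vertex's edges, attach it as a pendant to $z$), observe that a pendant vertex cannot lie on any constituent cycle of $C_{2k_1,\ldots,2k_t}$ so the modified graph stays forbidden-subgraph-free, and conclude via a Rayleigh-quotient comparison that the spectral radius strictly increases, contradicting extremality. The only cosmetic differences are that the paper attaches a single outside vertex (invoking that a connected proper supergraph has strictly larger spectral radius) where you attach all outside vertices with an $\epsilon$-weighted test vector, and the paper splits the eigenweight bound into the cases $v \sim z$ and $v \not\sim z$ while your computation handles both uniformly; these do not change the substance of the argument.
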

\begin{proof}

Say $z$ is a vertex with maximum eigenweight $\mathrm{x}_z = 1$. Then $\sqrt{\kappa n} \le \lambda \le d(z)$. Now suppose to the contrary that $G$ is not connected. Say $V_1 \ni z$ and $V_2$ are two connected components of $G$. Then $\lambda(G[V_1]) = \lambda$. Next, for any $u \in V_2$, let $\hat{G}$ be the graph obtained from $G$, having an identical vertex set and $E(\hat{G}) = E(G[V_1]) \cup \{uz\}$. Then, $G[V_1]$ is a proper subgraph of $\hat{G}$. Additionally, $\hat{G}[V_1 \cup \{u\}]$ is connected and contains $G[V_1]$ as a proper subgraph. 
Therefore $\lambda < \lambda(\hat{G}[V_1 \cup \{u\}]) = \lambda(\hat{G})$.
Since $G \in \rSPEX(n, C_{2k_1, \ldots, 2k_t})$, we must have that $C_{2k_1, \ldots, 2k_t} \subset \hat{G}$. 

*Now, $u$ must be a part of the $C_{2k_1, \ldots, 2k_t}$ created in $\hat{G}$. 
However, $d_{\hat{G}}(u) = 1$, so $u$ cannot be in any $C_{{2k_1, \ldots, 2k_t}}$, and $\hat{G}$ is $C_{{2k_1, \ldots, 2k_t}}$-free.
But this contradicts the fact that $G \in \mathrm{SPEX}(n, C_{2k_1, \ldots, 2k_t})$. Thus, $G$ must be a connected graph. [Note the remark given after the end of the proof, to see a slight modification to the arguments in this paragraph* for purposes of Theorem \ref{theorem small subgraph of intersecting even cycle}].

Now let $v$ be an arbitrary vertex of $G$. If $v$ is adjacent to any vertex with eigenweight $1$, then clearly $\lambda \mathrm{x}_v \ge 1$ and therefore $\mathrm{x}_v \ge \frac{1}{\lambda}$ and we are done. 
However, if $v$ is not adjacent to $z$ and assume to the contrary that $\mathrm{x}_v < \frac{1}{\lambda(G)}$, then we modify $G$ to obtain the graph $G'$ as follows. Let $V(G') = V(G)$, and $E(G') = (E(G) \setminus \{uv: uv \in E(G)\}) \cup \{zv\}$. Then by the same arguments as above for $\hat{G}$ we can show that $G'$ is $C_{2k_1, \ldots, 2k_t}$-free. Moreover, $\mathrm{x}^TA(G')\mathrm{x} - \mathrm{x}^TA(G)\mathrm{x} = 2(1 - \lambda \mathrm{x}_v)\rx_v > 0$, so $\lambda(G') > \lambda(G)$, a contradiction. Hence, $\mathrm{x}_v \ge \frac{1}{\lambda}$.
\end{proof}

\begin{remark}
\label{remark for G is connected}
 Note the following version of the arguments in the proof of Lemma \ref{G is connected} that is relevant for the purposes of Theorem \ref{theorem small subgraph of intersecting even cycle}:

*Now if $\hat{G}$ contains a copy of $H\subset C_{2k_1, \ldots, 2k_t}$, then $u$ must be a part of the $H$ created in $\hat{G}$. 
However, $d_{\hat{G}}(u) = 1$ and $d_{G}(z) \ge \lambda > 2\kappa + t + 1 = |V(C_{2k_1, \ldots, 2k_t})|$ for $n$ large enough. So, if $\hat{G}$ contains a copy of $H$, then $G$ must also have contained a copy of $H$, a contradiction.   
Thus, $G$ must be a connected graph. 
 
\end{remark}

\section{Structural results for extremal graphs}
\label{section structural results for extremal graphs}

In this section, we continue to revise our estimations for the structure of $G$. We continue our use of auxiliary constants $\alpha, \epsilon,$ and $ \eta$. We assume $n$ to be large enough for all lemmas in this section. 

We begin by showing that the degree of any vertex in $L$ is linearly growing with respect to $n$ and therefore using Theorem \ref{theorem degree squares} it follows that $L$ has constant size.

\begin{lemma}
\label{degrees of vertices in L}
Any vertex $v \in L$ has degree $d(v) \ge \dfrac{\alpha}{10(4\kappa + t - 1)}n$. Moreover, $|L| < \dfrac{100(4\kappa + t)^3}{\alpha^2} < \dfrac{12500\kappa^3}{\alpha^2}$.
\end{lemma}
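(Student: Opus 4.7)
The plan is to establish the linear-in-$n$ degree lower bound first, and then derive the constant upper bound on $|L|$ as a quick consequence of Theorem \ref{theorem degree squares}. For the degree bound, I would assume for contradiction that some $v \in L$ satisfies $d(v) < \alpha n / (10(4\kappa + t - 1))$ and work with the second-neighborhood expansion
\[
\lambda^2 \rx_v = d(v)\rx_v + \sum_{w \in N_1(v)} |N_1(w) \cap N_1(v)| \rx_w + \sum_{w \in N_2(v)} |N_1(w) \cap N_1(v)| \rx_w.
\]
Each sum would be split according to whether $w \in L$ (using $\rx_w \le 1$ together with the bound $|L| \le n^{1-\delta}/\alpha$ from Lemma \ref{lemma:LM}) or $w \in S$ (using $\rx_w < \alpha$), and the resulting edge counts would be controlled by Lemma \ref{Many edges in a bipartition II}, which gives $2e(N_1(v)) + e(N_1(v), N_2(v)) \le (2\kappa + t/2 - 1)(d(v) + n - 1)$ since $G$ is $C_{2k_1, \ldots, 2k_t}$-free. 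Combining this upper bound with the lower bound $\lambda^2 \rx_v \ge \kappa n \alpha$ coming from $\lambda^2 \ge \kappa n$ (Lemma \ref{bounds on lambda}) and $\rx_v \ge \alpha$ should, after rearrangement, yield the claimed bound on $d(v)$.

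If the direct counting is insufficient, the fallback is a local modification exploiting spectral extremality. Let $z$ be a vertex with $\rx_z = 1$ and form $G'$ from $G$ by deleting all edges incident to $v$ and reattaching $v$ to $N_1(z) \setminus \{v\}$, so that $v$ becomes a non-adjacent twin of $z$ in $G'$. A Rayleigh quotient calculation with $\rx$ as test vector, using $\sum_{u \sim z} \rx_u = \lambda$, produces
\[
\rx^T A(G') \rx - \rx^T A(G) \rx \ge 2\lambda \rx_v (1 - \rx_v),
\]
which is strictly positive whenever $\rx_v < 1$, and hence $\lambda(G') > \lambda(G)$. Spectral extremality then forces $G'$ to contain a copy of $F = C_{2k_1, \ldots, 2k_t}$; since $G' - v = G - v$ is $F$-free, any such copy uses $v$. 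If the copy avoids $z$, swapping $v$ with $z$ (whose $G$-neighbourhood equals $v$'s $G'$-neighbourhood) transfers it into $G$, contradicting $F$-freeness; if it uses both $v$ and $z$, one exploits their non-adjacency in $G'$ together with the small size of $L$ to either reroute the copy inside $G$ or derive a structural contradiction. Vertices with $\rx_v = 1$ are handled as a boundary case, noting that any such vertex already satisfies the degree bound via an auxiliary argument.

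Once the degree bound is secured, Theorem \ref{theorem degree squares} immediately gives
\[
(4\kappa + t)(n-1)n > \sum_{u \in V(G)} d_G(u)^2 \ge |L| \cdot \biggl(\frac{\alpha n}{10(4\kappa + t - 1)}\biggr)^2,
\]
and rearranging yields $|L| < 100(4\kappa + t)^3/\alpha^2$. The principal obstacle throughout is establishing the linear-in-$n$ degree bound: because $\lambda = \Theta(\sqrt{n})$, naive eigenvalue arguments typically produce only $d(v) = \Omega(\sqrt{n})$, so bridging the gap to $\Omega(n)$ requires either a delicate balance between Lemmas \ref{lemma:LM} and \ref{Many edges in a bipartition II} in the direct estimate, or the modification argument above, where the most intricate step is the case in which the $F$-copy created in $G'$ uses both $v$ and $z$ simultaneously.
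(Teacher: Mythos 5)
Your second half is fine and matches the paper: once the linear degree bound is in hand, plugging it into Theorem \ref{theorem degree squares} gives $|L| < 100(4\kappa+t)^3/\alpha^2$ exactly as you wrote. The gap is in the first half, and it is a real one. In your direct estimate you split the second neighborhood into $L$ (weights $\ge \alpha$, few vertices) and $S$ (weights $< \alpha$), bounding the $S$-contribution by $e(N_1(v),S_2)\cdot\alpha \le (2\kappa+t/2-1)(d(v)+n)\alpha$. But the only lower bound available is $\lambda^2\rx_v \ge \kappa n c$ with $c=\rx_v\ge\alpha$, and the worst case is precisely $c=\alpha$, i.e.\ the same threshold you used for the split. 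Then the small-weight term alone is about $(2\kappa+t/2-1)n\alpha \ge \kappa n\alpha$, so no contradiction follows no matter how you treat the other terms; the estimate misses by roughly a factor of $2$. This is exactly why the paper splits at the set $M=\{v:\rx_v\ge\alpha/4\}$ rather than at $L$: Lemma \ref{lemma:LM} also bounds $|M|\le 4n^{1-\delta}/\alpha$, the edges into $M_2$ are negligible, and the remaining vertices have weight $<\alpha/4$, so their contribution is at most $(2\kappa+t/2-1)n\alpha/4 < (\kappa-0.3)n\alpha$, which closes the argument. Your write-up only invokes the $|L|$ bound and the $\rx_w<\alpha$ estimate, so the key quantitative idea (the $\alpha/4$ slack built into the definition of $M$) is missing.

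The fallback modification argument does not repair this. First, it never uses the hypothesis that $d(v)$ is small, so even if it produced a contradiction it would apply to every vertex with $\rx_v<1$ and could not be a proof of a degree lower bound. Second, the contradiction it needs is not there: after deleting $v$'s edges and joining $v$ to $N_1(z)\setminus\{v\}$, the new graph $G'$ can genuinely contain $C_{2k_1,\ldots,2k_t}$ through \emph{both} $v$ and $z$, and such a copy cannot be rerouted into $G$. For instance, with $t=1$, $k_1=3$ and $G=S_{n,2}^+$ (which at this stage you cannot exclude as the ambient structure), taking $z$ a dominating vertex and $v$ an independent vertex, the modified graph contains the $6$-cycle $z c u d v e z$ with $u$ the other dominating vertex and $c,d,e$ independent vertices, even though $G$ is $C_6$-free. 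So $\lambda(G')>\lambda(G)$ together with extremality yields no contradiction, and the "swap $v$ with $z$ or exploit non-adjacency" step fails in exactly the case you flag as delicate. The paper's local-modification arguments (Lemmas \ref{G is connected} and \ref{minimum eigenweight in the neighborhood of a vertex}) avoid this by reattaching $v$ only to a single vertex or to the $\kappa$ vertices of $L'$, where new copies can be excluded; reattaching to all of $N_1(z)$ cannot be controlled this way.
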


\begin{proof}
Assume to the contrary that there exists a vertex $v \in L$ of degree $d(v)<\dfrac{\alpha}{10(4\kappa + t - 1)}n$. Let $\mathrm{x}_v = c \ge \alpha$. The second degree eigenvector-eigenvalue equation with respect to the vertex $v$ gives that
\begin{equation}
\label{lower bound on degrees in L}
    \begin{split}
        \kappa nc \leq \lambda^2 c = \lambda^2 \mathrm{x}_v = \sum_{\substack{u \sim v \\ w \sim u}} \mathrm{x}_w 
        & \leq d(v)c + 2e(N_1(v)) + \sum_{u \sim v}\sum_{\substack{w \sim u, \\ w \in N_2(v)}}\mathrm{x}_w\\
        & \leq (4\kappa + t - 1)d(v) + \sum_{u \sim v}\sum_{\substack{w \sim u, \\ w \in M _2(v)}}\mathrm{x}_w + \sum_{u \sim v}\sum_{\substack{w \sim u, \\ w \in N_2 \setminus M_2(v)}}\mathrm{x}_w,
    \end{split}
\end{equation}
where the last inequality follows from $e(N_1(v)) \le (2\kappa + t/2 - 1) d(v)$.
Since $d(v) < \dfrac{\alpha}{10(4\kappa + t - 1)}n$ by our assumption and $c \ge \alpha$, we have
\begin{equation}
    (\kappa - 0.1)nc <  \sum_{u \sim v}\sum_{\substack{w \sim u, \\ w \in M _2(v)}}\mathrm{x}_w + \sum_{u \sim v}\sum_{\substack{w \sim u, \\ w \in N_2 \setminus M_2(v)}}\mathrm{x}_w.
\end{equation}
From $d(v)<\frac{\alpha}{10(4\kappa + t - 1)}$ and Lemma \ref{lemma:LM}, we get that
 \[\sum_{u \sim v}\sum_{\substack{w \sim u, \\ w \in M_2(v)}}\mathrm{x}_w \leq e(N_1(v), M_2(v)) \leq (2\kappa + t/2 - 1)(d(v) + |M|) < (2\kappa + t/2 - 1)\left(\dfrac{\alpha}{10(4\kappa + t - 1)}n + \dfrac{4n^{1 - \delta}}{\alpha}\right).\]
For $n$ sufficiently large, we have that 
\[
(2\kappa + t/2 - 1)\left(\dfrac{\alpha}{10(4\kappa + t - 1)}n + \dfrac{4n^{1 - \delta}}{\alpha}\right) \leq .2n\alpha \leq .2nc,
\]
and consequently using Lemma \ref{bounds on turan number for 1-subdivisions},
\[(\kappa - 0.3)nc < \sum_{u \sim v}\sum_{\substack{w \sim u, \\ w \in N_2 \setminus M_2(v)}}\mathrm{x}_w \leq e(N_1(v), N_2 \setminus M_2(v))\dfrac{\alpha}{4} \leq (2\kappa + t/2 -1)n \dfrac{\alpha}{4}.\]
This is a contradiction because $\kappa \geq t \ge 1$ and $c \ge \alpha$. Hence, $d(v) \geq \dfrac{\alpha}{10(4\kappa + t - 1)}n$ for all $v \in L$. 
Thus, for $n$ sufficiently large we have that $d^2(v) \ge \left(\dfrac{\alpha}{10(4\kappa + t - 1)}n\right)^2$ for all $v \in L$.

Combined with Theorem \ref{theorem degree squares} this gives us that
\[|L|\left(\dfrac{\alpha}{10(4\kappa + t - 1)}n\right)^2 \le\sum_{v \in L}d^2(v) \le \sum_{v \in V(G)}d^2(v) < (4\kappa + t)(n-1)n.\]

Therefore,
\[|L| \leq \dfrac{(4\kappa + t)(n-1)n(10(4\kappa + t - 1))^2}{\alpha^2n^2} < \dfrac{100(4\kappa + t)^3}{\alpha^2} \le \dfrac{12500 \kappa^3}{\alpha^2},\]
where we have used $t \le \kappa$ in the last inequality.
\end{proof}

We will now refine our arguments in the proof above to improve degree estimates for the vertices in $L'$.

\begin{lemma}
\label{mindegrees for vertices in L'}
If $v$ is a vertex in $L'$ with $\mathrm{x}_v = c$, then $d(v) \ge cn -\epsilon n$. 
\end{lemma}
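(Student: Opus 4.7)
The plan is to argue by contradiction: assume there exists $v \in L'$ with $\rx_v = c \ge \eta$ yet $d(v) < cn - \epsilon n$. The strategy is to invoke the eigenvector equation at each neighbor of $v$ to bound its weight, and then sum the resulting inequalities over $u \sim v$.

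First, for each $u \sim v$, I would split the sum $\sum_{w \sim u}\rx_w$ according to whether $w$ lies in $L$ (where $\rx_w \le 1$) or outside $L$ (where $\rx_w < \alpha$) to obtain
$$\lambda \rx_u = \sum_{w \sim u}\rx_w \le |N(u) \cap L| + \alpha\,d(u) \le |L| + \alpha\,d(u).$$
Summing over $u \sim v$ and using the first-order identity $\sum_{u \sim v}\rx_u = \lambda c$ together with the $C_{2k_1,\ldots,2k_t}$-free bound $\sum_{u \sim v}d(u) \le (2\kappa + t/2)\bigl(d(v) + n - 1\bigr)$ from the proof of Lemma \ref{bounds on lambda}, I arrive at
$$\lambda^2 c \le |L|\,d(v) + \alpha\,(2\kappa + t/2)\bigl(d(v) + n - 1\bigr).$$
Combined with the lower bound $\lambda^2 \ge \lambda(S_{n,\kappa}^+)^2 \ge \kappa n - O(\sqrt{n})$ from Lemma \ref{bounds on lambda}, rearranging this inequality yields a linear lower bound on $d(v)$.

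To tighten this to the desired $d(v) \ge cn - \epsilon n$, I would refine the estimate of $\sum_{w \sim u,\,w\in L}\rx_w$ by separating $v$ itself (whose contribution, when $v \sim u$, is exactly $c$) from the remaining vertices of $L$, and then accounting carefully for the common-neighborhood structure between $v$ and each $w \in L \setminus \{v\}$. The main obstacle is controlling this $L$-contribution: the naive bound $|L|\,d(v)$ is far too loose because $|L|$ can be as large as $12500\kappa^3/\alpha^2$ by Lemma \ref{degrees of vertices in L}. I plan to overcome this using the $K_{\kappa+1,\kappa+t}$-free property (Lemma \ref{intersectiong circuits in large complete bipartite graphs}) to constrain common neighborhoods of heavy vertices, together with a local-swap argument of the same flavor as in the proof of Lemma \ref{G is connected}: if $v$ had too many non-neighbors with $\rx_u < c$, then for some such $u$ the replacement of $N(u)$ by a copy of $N(v)$ would strictly increase the Rayleigh quotient while preserving $C_{2k_1,\ldots,2k_t}$-freeness, contradicting $G \in \rSPEX(n,C_{2k_1,\ldots,2k_t})$. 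Invoking the hierarchy $\alpha \ll \epsilon \ll \eta$ from \eqref{choice of constants}, these refinements should close the gap and deliver the contradiction $d(v) \ge cn - \epsilon n$.
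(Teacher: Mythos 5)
There is a genuine gap. Your opening reduction (summing the first-order eigenequations over $u \sim v$, which is just the second-order equation $\lambda^2 \rx_v = \sum_{u\sim v}\lambda\rx_u$, and bounding the contribution of $S$-vertices by $\alpha$ times an edge count controlled by Lemma \ref{Many edges in a bipartition II}) matches the paper's starting point, but it only yields $|L|\,d(v) \gtrsim \kappa\eta n$, i.e.\ $d(v) \ge c_0 n$ for a tiny constant $c_0 \approx \kappa\eta/|L|$, which is far from $cn-\epsilon n$; you acknowledge this, but the ``refinement'' that is supposed to close the gap is never carried out. The paper's actual mechanism is concrete and quantitative: from the second-order equation one gets $e(S_1,L_1\cup L_2) > (\kappa-1)nc + \tfrac{\epsilon^2 n}{2}$ under the contradiction hypothesis $d(v)<cn-\epsilon n$; then, since $|S_1|\le d(v) < (c-\epsilon)n$, at least $\delta n$ vertices of $S_1$ must have $\ge \kappa$ neighbors in $L_1\cup L_2$ (otherwise $e(S_1,L_1\cup L_2) < (\kappa-1)(c-\epsilon)n+\epsilon n$, a contradiction), and a pigeonhole over the at most $\binom{|L|}{\kappa}$ possible $\kappa$-subsets of $L$ produces $\kappa$ vertices of $L_1\cup L_2$ with at least $\kappa+t$ common neighbors in $S_1$, hence $K_{\kappa+1,\kappa+t}\subset G[S_1, L_1\cup L_2\cup\{v\}]$, contradicting $C_{2k_1,\ldots,2k_t}$-freeness via Lemma \ref{intersectiong circuits in large complete bipartite graphs}. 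Your phrase ``constrain common neighborhoods of heavy vertices'' gestures at this but supplies neither the edge-count threshold nor the counting step, and crucially it never uses the contradiction hypothesis $d(v)<cn-\epsilon n$ in a way that produces the sharp constant $c$.

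The concrete substitute you do propose, a local swap replacing $N(u)$ by a copy of $N(v)$ for a low-weight non-neighbor $u$, does not work as stated: after the swap $u$ becomes a non-adjacent twin of $v$, and duplicating a vertex need not preserve $C_{2k_1,\ldots,2k_t}$-freeness, since a new copy can use both $u$ and $v$ (already for a single $C_4$, two nonadjacent vertices sharing two neighbors form one, and here $v$ has many neighbors). This is exactly why the paper's swap arguments (Lemma \ref{G is connected} and Lemma \ref{minimum eigenweight in the neighborhood of a vertex}) reattach a vertex only to $z$ or to the $\kappa$-set $L'$, where freeness can be verified by hand. Moreover, even granting the swap, you do not explain how ``few low-weight non-neighbors'' would translate into $d(v)\ge cn-\epsilon n$ (non-neighbors of large weight are not excluded by the Rayleigh comparison). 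So the essential idea of the paper's proof, namely manufacturing a forbidden $K_{\kappa+1,\kappa+t}$ from the excess of $e(S_1,L_1\cup L_2)$, is missing, and the replacement argument is unsound.
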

\begin{proof}
Our proof is again by contradiction. The second degree eigenvalue-eigenvector equation with respect to the vertex $v$ gives
\begin{equation}
\begin{split}
    knc 
    & \le \lambda^2 c = \sum_{u \sim v} \sum_{w \sim u} \mathrm{x}_w = d(v) c + \sum_{u \sim v} \sum_{\substack{w \sim u,\\ w \neq v}} \mathrm{x}_w\\
    & \le d(v) c + \sum_{u \in S_1} \sum_{\substack{w \sim u \\ w \in L_1 \cup L_2}} \mathrm{x}_w + 2e(S_1)\alpha + 2e(L) + e(L_1, S_1)\alpha + e(N_1, S_2)\alpha.
\end{split}
\end{equation}
The observation that the subgraph of $G$ with edge set $E(N_1) \cup E(N_1, N_2)$ has at most $(2k + t/2 - 1)n$ edges, which follows form Lemma \ref{Many edges in a bipartition II}, along with $e(N_1) \le (2\kappa + t/2 -1) d(v)$ implies that
\begin{align*}
    2e(S_1) &\le (4\kappa + t - 2)n,\\
    e(L_1, S_1) &\le (2\kappa + t/2 - 1)n,\\
    e(N_1, S_2) &\le (2\kappa + t/2 - 1)n.
\end{align*}
Combining these inequalities with Lemma \ref{degrees of vertices in L}, we deduce that
\begin{equation}
    \begin{split}
    &2e(S_1)\alpha + 2e(L) + e(L_1, S_1)\alpha + e(N_1, S_2)\alpha\\
    \le &(4\kappa + t - 2)n\alpha + 2 \binom{|L|}{2} + (2\kappa + t/2 - 1)n\alpha + (2\kappa + t/2 - 1)n\alpha\\
    \le &10\kappa n\alpha,
    \end{split}
\end{equation}
for $n$ sufficiently large. Hence,  
\begin{equation}
\label{upperbound on lambdasq 2}
    \kappa nc \le d(v) c + \sum_{u \in S_1} \sum_{\substack{w \sim u \\ w \in L_1 \cup L_2}} \mathrm{x}_w + 10 \kappa n\alpha\le d(v) c + e(S_1, L_1 \cup L_2) + 10 \kappa n \alpha \le d(v) c + e(S_1, L_1 \cup L_2) + \frac{\epsilon^2 n}{2},
\end{equation}
where the last inequality is by \eqref{choice of constants}. 
Thus, using $d(v) < cn - \epsilon n$, we get that
\begin{equation}
    \left(\kappa - c + \epsilon\right)nc < \left(\kappa n - d(v)\right)c \le e(S_1, L_1 \cup L_2) + \frac{\epsilon^2 n}{2}.
\end{equation}

Since $v \in L'$, by \eqref{choice of constants} we have $c \geq \eta > \epsilon$. Using $\epsilon < c \le 1$, we obtain that
\begin{equation}
\label{degree bound using edges in some bipartite graph involving vertices of L}
    e(S_1, L_1 \cup L_2) > (\kappa -c)nc + \epsilon nc - \frac{\epsilon^2 n}{2} > (\kappa -1)nc + \frac{\epsilon^2 n}{2}.
\end{equation}

We show that $G$ contains a $K_{\kappa + 1, \kappa + t}$ which gives a contradiction using Lemma \ref{intersectiong circuits in large complete bipartite graphs}. We first prove the following claim.

\begin{claim}
\label{claim in many edges on (L, MUT)}
If $\delta:= \frac{\epsilon \alpha^2}{12500 \kappa^3}$, then there are at least $\delta n$ vertices inside $S_1$ with degree at least $\kappa$ in the bipartite subgraph $G[S_1, L_1 \cup L_2]$.
\end{claim}

\begin{proof}
Assume to the contrary that less than $\delta n$ vertices in $S_1$ have degree at least $\kappa$ in $G[S_1, L_1 \cup L_2]$. Then $e(S_1, L_1 \cup L_2) < (\kappa -1)|S_1| + |L|\delta n < (\kappa -1)(c-\epsilon)n +\epsilon n$, because $|S_1| \leq d(z)$ and by Lemma \ref{degrees of vertices in L}. Combining this with \eqref{degree bound using edges in some bipartite graph involving vertices of L} gives $ (\kappa -1)nc- (\kappa -2)n \epsilon > e(S_1, L_1\cup L_2) > (\kappa -1)nc + \frac{\epsilon^2 n}{2}$, a contradiction.
\end{proof}
Let $D$ be the set of vertices of $S_1$ that have degree at least $\kappa$ in $G[S_1, L_1 \cup L_2]$. Thus $|D| \ge \delta n$. Since there are only $\binom{|L|}{\kappa} \le \binom{12500 \kappa^3/\alpha^2}{\kappa}$ options for any vertex in $D$ to choose a set of $\kappa$ neighbors from, it implies that there exists some set of $\kappa$ vertices in $L_1 \cup L_2$ with at least $\delta n/ \binom{|L|}{\kappa} \ge  \frac{\epsilon \alpha^2 n}{12500 \kappa^3}/ \binom{12500 \kappa^3/\alpha^2}{\kappa}$ common neighbors in $D$. For $n$ large enough, this quantity is at least $\kappa + t$, thus $K_{\kappa +1, \kappa + t} \subset G[S_1, L_1 \cup L_2 \cup \{v\}]$, and we get our desired contradiction. 
\end{proof}

An important takeaway of Lemma \ref{mindegrees for vertices in L'} is that $d(z) \geq (1-\epsilon)n$ and for any $v\in L'$ we have $d(v) \geq (\eta - \epsilon)n$. By \eqref{choice of constants}, it follows that the neighborhoods of $z$ and $v$ intersect. So $L' \subset \{z\} \cup N_1(z) \cup N_2(z)$. 

Next we bound the number of edges in a bipartite graph contained in the closed 2-ball around $z$ : $N_2[z]$.

\begin{lemma}
\label{neighborhood of x, L}
For $z$ the vertex with $\mathbf{\mathrm{x}}_z =1$, we have $(1 - \epsilon)\kappa n \leq e(S_1, \{z\} \cup L_1\cup L_2) \leq (\kappa +\epsilon)n$.   
\end{lemma}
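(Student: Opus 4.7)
The plan is to establish the lower and upper bounds separately, as they stem from rather different principles.

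For the lower bound, I would reuse the spectral inequality derived in the proof of Lemma \ref{mindegrees for vertices in L'}. Since $\rx_z = 1 \ge \eta$, the vertex $z$ lies in $L'$, so inequality \eqref{upperbound on lambdasq 2} applied at $v = z$ with $c = 1$ gives
\[
\kappa n \;\le\; d(z) + e(S_1, L_1 \cup L_2) + \tfrac{\epsilon^2 n}{2}.
\]
Since $N_1(z) = L_1 \sqcup S_1$ yields $d(z) = |L_1| + |S_1| = |L_1| + e(S_1, \{z\})$, this is equivalent to $\kappa n \le |L_1| + e(S_1, \{z\} \cup L_1 \cup L_2) + \epsilon^2 n/2$. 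Because $|L_1| \le |L|$ is bounded by the constant from Lemma \ref{degrees of vertices in L}, for $n$ sufficiently large we have $|L_1| + \epsilon^2 n/2 \le \epsilon n$, and rearranging yields $e(S_1, \{z\} \cup L_1 \cup L_2) \ge (\kappa - \epsilon)n \ge (1 - \epsilon)\kappa n$, where the last inequality uses $\kappa \ge 1$.

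For the upper bound, I would argue by contradiction: assume $e(S_1, \{z\} \cup L_1 \cup L_2) > (\kappa + \epsilon)n$. If every vertex of $S_1$ had at most $\kappa$ neighbors in $\{z\} \cup L_1 \cup L_2$, then $e(S_1, \{z\} \cup L_1 \cup L_2) \le \kappa |S_1| \le \kappa n$, contradicting the assumption. Hence the subset $B \subseteq S_1$ of vertices with at least $\kappa + 1$ neighbors in $\{z\} \cup L_1 \cup L_2$ satisfies $|B| \ge \epsilon n / (|L|+1)$, which is linear in $n$ since $|L|$ is constant. A second pigeonhole over the constantly many $(\kappa + 1)$-subsets of $\{z\} \cup L_1 \cup L_2$ then produces a specific subset $T$ of size $\kappa + 1$ that is a common neighborhood of at least $|B|/\binom{|L|+1}{\kappa+1} \ge \kappa + t$ vertices of $S_1$ once $n$ is large. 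This yields a copy of $K_{\kappa+1, \kappa+t}$ in $G$, which contains $C_{2k_1, \ldots, 2k_t}$ by Lemma \ref{intersectiong circuits in large complete bipartite graphs} (the $k_1 = \cdots = k_t = 2$ case being an elementary inclusion), contradicting the $C_{2k_1, \ldots, 2k_t}$-freeness of $G$.

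The main obstacle is really just careful bookkeeping: both bounds reduce to routine consequences of earlier lemmas, and the crucial enabling fact is that $|L|$ is bounded by a constant independent of $n$ (Lemma \ref{degrees of vertices in L}). This constant-size bound is what makes the two pigeonholes in the upper bound produce an arbitrarily large common neighborhood, and what renders $|L_1|$ negligible in the additive error of the lower bound. No new spectral estimate is needed.
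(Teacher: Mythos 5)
Your proof is correct and follows essentially the same route as the paper: the lower bound comes from \eqref{upperbound on lambdasq 2} applied at $z$ (absorbing $|L_1|$ and $\tfrac{\epsilon^2 n}{2}$ into the error term), and the upper bound argues by contradiction, finding linearly many vertices of $S_1$ with many neighbors in the constant-size set $\{z\} \cup L_1 \cup L_2$ and pigeonholing to produce a $K_{\kappa+1,\kappa+t}$, forbidden by Lemma \ref{intersectiong circuits in large complete bipartite graphs}. The only cosmetic difference is that you pigeonhole over $(\kappa+1)$-subsets of $\{z\} \cup L_1 \cup L_2$, whereas the paper finds $\delta n$ vertices of $S_1$ with at least $\kappa$ neighbors in $L_1 \cup L_2$ and then appends $z$ (every vertex of $S_1$ being adjacent to $z$); both give the same contradiction.
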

\begin{proof}

To obtain the lower bound, we use \eqref{upperbound on lambdasq 2}. Given $ \mathrm{x}_z= 1$, we have 
\[
\kappa n(1) \le d(z) + e(S_1, L_1 \cup L_2) + \frac{\epsilon^2 n}{2}.
\]
Since $e(S_1, \{z\} \cup L_1\cup L_2)= e(S_1, L_1\cup L_2) + d_{S_1}(z) = e(S_1, L_1\cup L_2) + d(z) - d_{L_1}(z) \ge \kappa n - |L_1| - \frac{\epsilon^2 n}{2}$, the lower bound follows as $|L_1| + \frac{\epsilon^2 n}{2} < \kappa \epsilon n$.

To obtain the upper bound, we assume toward a contradiction that for the vertex $z$, we have $e(S_1, \{z\} \cup L_1\cup L_2)> (\kappa + \epsilon)n$. We will obtain a contradiction via Lemma \ref{intersectiong circuits in large complete bipartite graphs} by showing that $K_{\kappa +1, \kappa + t} \subset G$. For this we prove the following claim.

\begin{claim}
\label{claim in many edges on (L, MUT) modified}
Let $\delta:= \frac{\epsilon \alpha^2}{12500 \kappa^3}$. With respect to the vertex $z$ there are at least $\delta n$ vertices inside $S_1$ with degree $\kappa$ or more in $G[S_1, L_1 \cup L_2]$.
\end{claim}

\begin{proof}
Assume to the contrary that less than $\delta n$ vertices in $S_1$ have degree at least $\kappa$ in $G[S_1, L_1 \cup L_2]$. Then $e(S_1, L_1 \cup L_2) < (\kappa-1)|S_1| + |L|\delta n < (\kappa-1)n +\epsilon n$, because $|S_1| \leq n$ and by Lemma \ref{degrees of vertices in L}. This contradicts our assumption that $e(S_1, \{z\} \cup L_1\cup L_2) > (\kappa + \epsilon)n$.
\end{proof}
Hence, there is a subset $D \subset S_1$ with at least $\delta n$ vertices such that every vertex in $D$ has degree at least $\kappa$ in $G[S_1, L_1\cup L_2]$. Since there are only at most $\binom{|L|}{\kappa} \le \binom{12500 \kappa^3 / \alpha^2}{\kappa}$ options for every vertex in $D$ to choose a set of $\kappa$ neighbors from, we have that there exists some set of $\kappa$ vertices in $L_1\cup L_2 \setminus \{z\}$ having a common neighborhood with at least $\delta n / \binom{|L|}{\kappa} \ge \delta n / \binom{12500 \kappa^3 / \alpha^2}{\kappa} = \frac{\epsilon \alpha^2}{12500 \kappa^3} n / \binom{12500 \kappa^3 / \alpha^2}{\kappa} \ge \kappa + t$ vertices. Thus, $K_{\kappa, \kappa + t} \subset G[S_1, L_1 \cup L_2]$ and $K_{\kappa +1, \kappa + t} \subset G[S_1, L_1 \cup L_2 \cup \{z\}]$, a contradiction by Lemma \ref{intersectiong circuits in large complete bipartite graphs}. Hence $e(S_1, \{z\} \cup L_1\cup L_2) \leq (\kappa +\epsilon)n$.
\end{proof}

Next we show that all the vertices in $L'$ in fact have Perron weight close to the maximum. 

\begin{lemma}
\label{precise size of L}
For all vertices $v\in L'$, we have 
$d(v) \ge \left(1-\frac{1}{8 \kappa^3}\right)n$ and $\mathbf{\mathrm{x}}_v \geq 1- \frac{1}{16 \kappa^3}$. Moreover, $|L'| = \kappa$.
\end{lemma}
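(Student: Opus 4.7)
The plan is to establish the three conclusions in order, first proving the Perron weight bound, then deducing the degree bound, and finally handling $|L'| = \kappa$. Because Lemma \ref{mindegrees for vertices in L'} gives $d(v) \ge (\mathrm{x}_v - \epsilon)n$ and the constants in \eqref{choice of constants} force $\epsilon < \tfrac{1}{16\kappa^3}$, the degree bound $d(v) \ge \bigl(1 - \tfrac{1}{8\kappa^3}\bigr)n$ will follow trivially once we have $\mathrm{x}_v \ge 1 - \tfrac{1}{16\kappa^3}$; so the bulk of the work is the Perron weight bound.

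For the Perron weight bound, I would argue by contradiction and produce a forbidden $K_{\kappa + 1,\,\kappa + t}$ subgraph. Suppose some $v \in L'$ has $\mathrm{x}_v = c$ with $1 - c > \tfrac{1}{16\kappa^3}$. Applying the upper-bound argument of Lemma \ref{neighborhood of x, L} with $v$ in place of $z$ (the argument only uses that $v$ is adjacent to all of $S_1(v)$, which holds by definition) gives $e(S_1(v), \{v\} \cup L_1(v) \cup L_2(v)) \le (\kappa + \epsilon)n$. For a matching lower bound, I would start from the inequality $\lambda^2 c \le d(v) c + e(S_1(v), L_1(v) \cup L_2(v)) + \tfrac{\epsilon^2 n}{2}$ from the proof of Lemma \ref{mindegrees for vertices in L'}, and plug in the refined spectral estimate $\lambda^2 \ge (\kappa - 1)\lambda + \kappa(n - \kappa)$ coming from the characteristic polynomial of $S_{n,\kappa}$ and $\lambda \ge \lambda(S_{n,\kappa}^+) > \lambda(S_{n,\kappa})$. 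After adding $|S_1(v)| \ge d(v) - |L|$ and rearranging, this yields
\[
e(S_1(v), \{v\} \cup L_1(v) \cup L_2(v)) \;\ge\; (\kappa - 1)\lambda c + \kappa n c + d(v)(1 - c) - \kappa^2 c - |L| - \tfrac{\epsilon^2 n}{2}.
\]
Comparing with the upper bound, substituting $d(v) \ge (c - \epsilon)n$ from Lemma \ref{mindegrees for vertices in L'}, and using the hypothesis $1 - c > \tfrac{1}{16\kappa^3}$, the argument reduces to showing incompatibility with the constraint
\[
(\kappa + t)\,\eta \;<\; \bigl(1 - \tfrac{4}{5\kappa}\bigr)\bigl(\kappa - \tfrac{1}{16\kappa^2}\bigr) - (\kappa - 1)
\]
from \eqref{choice of constants}; here $(1 - \tfrac{4}{5\kappa})$ plays the role of $d(v)/n$, $(\kappa - \tfrac{1}{16\kappa^2})$ the role of $\kappa c$ at the critical threshold, and $(\kappa - 1)$ bounds the residual edge count. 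This produces the desired contradiction and forces $\mathrm{x}_v \ge 1 - \tfrac{1}{16\kappa^3}$.

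For $|L'| = \kappa$: the upper bound $|L'| \le \kappa$ is immediate from the degree estimate. Given any $\kappa + 1$ vertices $v_1, \ldots, v_{\kappa + 1} \in L'$, inclusion-exclusion on their neighborhoods gives
\[
\Bigl|\bigcap_{i=1}^{\kappa+1} N(v_i)\Bigr| \;\ge\; \sum_{i=1}^{\kappa+1} d(v_i) - \kappa n \;\ge\; (\kappa + 1)\bigl(1 - \tfrac{1}{8\kappa^3}\bigr) n - \kappa n \;=\; n\bigl(1 - \tfrac{\kappa + 1}{8\kappa^3}\bigr),
\]
which exceeds $\kappa + t$ for $n$ sufficiently large, yielding $K_{\kappa + 1,\,\kappa + t} \subset G$ and hence $C_{2k_1, \ldots, 2k_t} \subset G$ via Lemma \ref{intersectiong circuits in large complete bipartite graphs}, contradicting extremality. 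For the reverse inequality $|L'| \ge \kappa$, I would expand $\lambda = \sum_{u \sim z} \mathrm{x}_u$ in the eigenvalue equation at $z$, use $\mathrm{x}_u < \eta$ for $u \notin L'$ together with the lower bound on the weight contribution from $L' \cap N(z)$ implied by the refined edge count in Lemma \ref{neighborhood of x, L}, and show that $|L'| \le \kappa - 1$ would force $\lambda$ below $\lambda(S_{n, \kappa}^+)$, contradicting the extremality of $G$.

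The main obstacle is the Perron weight step: the bookkeeping of edge counts against the spectral lower bound has to balance very tightly against the hypothesized deficit $1 - c$, and the specific functional form of the first constraint on $\eta$ in \eqref{choice of constants} was calibrated precisely so that this argument closes.
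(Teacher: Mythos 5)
There is a genuine gap at the heart of the proposal: the contradiction for the Perron-weight bound is attempted at the vertex $v$ itself, and at $v$ no contradiction is available. Tracking your own inequalities, the comparison of your lower bound with the upper bound $e(S_1(v),\{v\}\cup L_1(v)\cup L_2(v))\le(\kappa+\epsilon)n$ reads, after discarding the lower-order terms $(\kappa-1)\lambda c$, $\kappa^2 c$, $|L|$ and $\tfrac{\epsilon^2 n}{2}$, as $\kappa c+\tfrac{d(v)}{n}(1-c)\le\kappa+\epsilon+o(1)$. Since $d(v)\le n$, the left side is at most $\kappa c+(1-c)=\kappa-(\kappa-1)(1-c)\le\kappa$ for every $c\le 1$, so the inequality is \emph{never} violated, no matter how large you assume the deficit $1-c$ to be; the deficit weakens the left side (through the factor $c$ on $\kappa n$) by more than it helps (through $d(v)(1-c)$). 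This is why the paper does not argue at $v$: it argues at $z$, where $\mathrm{x}_z=1$ makes the budget tight. There, $\kappa n\le\lambda^2\mathrm{x}_z$ is bounded above using $e(S_1(z),\{z\}\cup L_1(z)\cup L_2(z))\le(\kappa+\epsilon)n$, except that the $|N_1(z)\cap N_1(v)|$ walks ending at $v$ carry weight only $\mathrm{x}_v\le 1-\tfrac{1}{16\kappa^3}$; since $d(z)\ge(1-\epsilon)n$ and $d(v)\ge(\eta-\epsilon)n$ force $|N_1(z)\cap N_1(v)|\ge(\eta-2\epsilon)n$, the loss $(\eta-2\epsilon)n/(16\kappa^3)$ exceeds the total slack of roughly $2\epsilon n$ precisely because $\epsilon<\eta/(32\kappa^3+2)$ in \eqref{choice of constants}. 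That overlap-at-$z$ mechanism, and that constraint, are what close the argument; the constraint you invoke, $(\kappa+t)\eta<\bigl(1-\tfrac{4}{5\kappa}\bigr)\bigl(\kappa-\tfrac{1}{16\kappa^2}\bigr)-(\kappa-1)$, is calibrated for Lemma \ref{E empty set}, and the claimed identification of $1-\tfrac{4}{5\kappa}$ with $d(v)/n$ has no justification at this stage (that $d(v)$ is close to $n$ is exactly what is being proved).

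Two smaller points. Your deduction of the degree bound from the weight bound, and your argument that $|L'|\le\kappa$ via the common neighborhood of $\kappa+1$ high-degree vertices and Lemma \ref{intersectiong circuits in large complete bipartite graphs}, agree with the paper and are fine. But the sketch for $|L'|\ge\kappa$ via the first-order equation $\lambda=\sum_{u\sim z}\mathrm{x}_u$ cannot work as stated: $\lambda=\Theta(\sqrt{\kappa n})$ while the contribution of the vertices outside $L'$ is only bounded by $\eta n$, which dwarfs $\lambda$, so no contradiction with $\lambda\ge\lambda(S_{n,\kappa}^+)$ can follow. The paper instead uses the second-order inequality at $z$: if $|L'|\le\kappa-1$ then $\kappa n\le\lambda^2\le e(S_1,(L_1\cup L_2)\cap L')+e(S_1,(L_1\cup L_2)\setminus L')\eta+\tfrac{\epsilon^2n}{2}\le(\kappa-1)n+(\kappa+\epsilon)\eta n+\tfrac{\epsilon^2 n}{2}<\kappa n$, using Lemma \ref{neighborhood of x, L}.
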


\begin{proof}
Suppose we are able to show that $\mathbf{\mathrm{x}}_v \geq 1- \frac{1}{16\kappa^3}$, then using Lemma \ref{mindegrees for vertices in L'} and by \eqref{choice of constants} we have that
$d(v) \ge  \left(1-\frac{1}{8\kappa^3}\right)n$.
Then because every vertex $v \in L'$ has $d(v) \ge  \left(1-\frac{1}{8\kappa^3}\right)n$, we must have that $|L'| \le k$ else $G[S_1, L']$ contains a $K_{\kappa +1, \kappa +t}$, a contradiction by Lemma \ref{intersectiong circuits in large complete bipartite graphs}. 

Next, if $|L'| \le \kappa-1$ then using Lemma \ref{neighborhood of x, L} and \eqref{upperbound on lambdasq 2} applied to the vertex $z$ we have 
\[\kappa n \le \lambda^2 \le e(S_1, (L_1 \cup L_2) \cap L') + e(S_1, (L_1 \cup L_2)\setminus L')\eta + \frac{\epsilon n^2}{2} \le (\kappa -1)n + (\kappa+ \epsilon)n\eta + \frac{\epsilon^2 n}{2} < \kappa n\]
where the last inequality holds by \eqref{choice of constants} and gives the contradiction. Thus $|L'| = \kappa$. Thus, all we need to show is that $\mathbf{\mathrm{x}}_v \geq 1- \frac{1}{16 \kappa^3}$ for any $v \in L'$.

Now towards a contradiction assume that there is some vertex in $v \in L'$ such that $\mathbf{\mathrm{x}}_v < 1 - \frac{1}{16\kappa^3}$.
Then refining \eqref{upperbound on lambdasq 2} with respect to the vertex $z$ we have that 
\begin{equation}
\begin{split}
 \kappa n 
 & \le \lambda^2 < e(S_1(z), \{z\} \cup L_1(z) \cup L_2(z) \setminus \{v\}) + |N_1(z) \cap N_1(v)|\mathbf{\mathrm{x}}_v + \frac{\epsilon^2 n}{2}\\   
 & < (\kappa +\epsilon)n - |S_1(z) \cap N_1(v)| + |N_1(z) \cap N_1(v)|\left(1 - \frac{1}{16\kappa^3}\right) + \frac{\epsilon^2 n}{2}\\
 & = \kappa n + \epsilon n + |L_1(z) \cap N_1(v)| - |N_1(z) \cap N_1(v)|\frac{1}{16\kappa^3} + \frac{\epsilon^2 n}{2}.
\end{split}    
\end{equation}

Thus, using Lemma \ref{degrees of vertices in L} we have $\frac{|N_1(z) \cap N_1(v)|}{16\kappa^3} < \epsilon n + \frac{\epsilon^2 n}{2} + |L| \le 2 \epsilon n$.

But, $v \in L'$, thus $\mathbf{\mathrm{x}}_v \ge \eta$ and $d(v) \ge \left(\eta - \epsilon\right)n$, and so $|N_1(z) \cap N_1(v)| \ge  \left(\eta - 2\epsilon\right)n > 32\kappa^3 \epsilon n$ by \eqref{choice of constants}, a contradiction.
\end{proof}
Now we have $|L'| =  \kappa$ and every vertex in $L'$ has degree at least $\left(1 - \frac{1}{8 \kappa^3}\right)n$. Thus, the common neighborhood of vertices in $L'$ has at least $\left(1 - \frac{1}{8 \kappa^2}\right)n$ vertices. Let $R$ denote the set of vertices in this common neighborhood.  Let $\mathcal{E}$ be the set of remaining ``exceptional" vertices not in $L'$ or $R$. Thus $|\mathcal{E}| \le \frac{n}{8\kappa^2}$. We will now show that $\mathcal{E} = \emptyset$ and thus $G$ contains a large complete bipartite subgraph $K_{\kappa, n-\kappa}$. For this, we will first prove a bound on the sum of Perron weights in the neighborhood of any vertex.

\begin{lemma}
\label{minimum eigenweight in the neighborhood of a vertex}
For any vertex $v \in V(G)$, the Perron weight in the neighborhood of $v$ satisfies  $\displaystyle\sum_{w \sim v}\mathbf{\mathrm{x}}_w \geq \kappa - \frac{1}{16 \kappa^2}$. 
\end{lemma}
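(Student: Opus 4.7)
The plan is to argue case-by-case based on where $v$ lies in the partition $V(G) = L' \sqcup R \sqcup \mathcal{E}$ introduced just after Lemma \ref{precise size of L}. From that lemma we have $|L'| = \kappa$ and $\mathrm{x}_u \geq 1 - 1/(16\kappa^3)$ for every $u \in L'$, so
\[
\sum_{u \in L'} \mathrm{x}_u \;\geq\; \kappa\left(1 - \frac{1}{16\kappa^3}\right) \;=\; \kappa - \frac{1}{16\kappa^2}.
\]

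The cases $v \in R$ and $v \in L'$ are immediate. If $v \in R$, then $v$ is adjacent to every vertex of $L'$ by the definition of $R$, so $\sum_{w \sim v} \mathrm{x}_w \geq \sum_{u \in L'} \mathrm{x}_u \geq \kappa - 1/(16\kappa^2)$. If $v \in L'$, the eigenvalue equation combined with Lemmas \ref{bounds on lambda} and \ref{precise size of L} gives $\sum_{w \sim v} \mathrm{x}_w = \lambda \mathrm{x}_v \geq \sqrt{\kappa n}\bigl(1 - 1/(16\kappa^3)\bigr)$, which is far larger than $\kappa$ once $n$ is sufficiently large.

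The main case is $v \in \mathcal{E}$, where $v$ fails to be adjacent to at least one vertex of $L'$. Assume for contradiction that $\sum_{w \sim v} \mathrm{x}_w < \kappa - 1/(16\kappa^2)$, and define $G^*$ from $G$ by deleting all edges incident to $v$ and inserting the $\kappa$ new edges $\{vu : u \in L'\}$. The Rayleigh quotient then yields
\[
\mathrm{x}^T A(G^*)\mathrm{x} - \mathrm{x}^T A(G)\mathrm{x} \;=\; 2\mathrm{x}_v\left(\sum_{u \in L'} \mathrm{x}_u \;-\; \sum_{w \sim v} \mathrm{x}_w\right) \;>\; 0,
\]
and hence $\lambda(G^*) > \lambda(G)$. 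The main obstacle is to verify that $G^*$ remains $C_{2k_1, 2k_2, \ldots, 2k_t}$-free, which I plan to establish by a swap argument. Any forbidden copy $F$ in $G^*$ must contain $v$, since $G^* \setminus \{v\} = G \setminus \{v\}$ is forbidden-free, and $N_F(v) \subseteq N_{G^*}(v) = L'$ by construction. Because $|R| \geq \bigl(1 - 1/(8\kappa^2)\bigr)n$ while $|V(F)| = 2\kappa + t + 1$, for $n$ large there exists $r \in R \setminus V(F)$. Replacing $v$ by $r$ in $F$ produces a copy $F^{(r)}$ of $C_{2k_1, \ldots, 2k_t}$ contained in $G$: the non-$v$ edges of $F$ already lie in $G$, while each new edge $ru$ with $u \in N_F(v) \subseteq L'$ lies in $G$ because $r \in R$ is adjacent to every vertex of $L'$. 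This contradicts $G \in \rSPEX(n, C_{2k_1, \ldots, 2k_t})$, completing the argument. The same swap carries over to Theorem \ref{theorem small subgraph of intersecting even cycle} with $H$ in place of $C_{2k_1, \ldots, 2k_t}$, since $|V(H)| \le 2\kappa + t + 1$ and $H \subset C_{2k_1, \ldots, 2k_t}$.
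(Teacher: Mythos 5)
Your proposal is correct and follows essentially the same route as the paper: the $L'$ and $R$ cases via the eigenvalue equation and the weight bound $\mathrm{x}_u \geq 1 - 1/(16\kappa^3)$ on $L'$, and for $v \in \mathcal{E}$ the same edge-rewiring to $L'$ with the Rayleigh principle, followed by the same vertex-swap into the large common neighborhood $R$ to show no forbidden copy is created. Nothing further is needed.
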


\begin{proof}
Clearly if $v \in L'$, we have \[\sum_{w \sim v}\mathbf{\mathrm{x}}_w = \lambda \mathbf{\mathrm{x}}_v \ge \lambda \left(1 - \frac{1}{16\kappa^3} \right) \geq \kappa - \frac{1}{16\kappa^2}.\] If $v \in R$, then \[\sum_{w \sim v}\mathbf{\mathrm{x}}_w \ge \sum_{\substack{w \sim v\\ w \in L'}}\mathbf{\mathrm{x}}_w \ge \kappa\left(1 - \frac{1}{16\kappa^3}\right) = \kappa - \frac{1}{16\kappa^2}.\]

Finally, let $v \in \mathcal{E}$. If $\sum_{w \sim v}\mathbf{\mathrm{x}}_w < \kappa - \frac{1}{16\kappa^2}$, consider the graph $H$ obtained from $G$ by deleting all edges adjacent to $v$ and adding the edges $uv$ for all $u \in L'$. Now since $\sum_{w \sim v}\mathbf{\mathrm{x}}_w < \kappa - \frac{1}{16\kappa^2}$ we have that $\rx^TA(H)\rx > \rx^TA(G)\rx$, and so by the Rayleigh principle $\lambda(H) > \lambda(G)$. However, there are no new intersecting even cycles $C_{2k_1, 2k_2, \ldots, 2k_t}$ that have isomorphic copies in $H$ but no isomorphic copies in $G$. To see this, assume to the contrary that there is an isomorphic copy of $C_{2k_1, 2k_2, \ldots, 2k_t}$ in $H$ but not in $G$. Then the $C_{2k_1, 2k_2, \ldots, 2k_t}$ has $2\kappa + t + 1$ vertices $v_1 (=v), v_2, \ldots, v_{2\kappa + t + 1}$ and $v$ has at most $\kappa$ neighbors in $C_{2k_1, 2k_2, \ldots, 2k_t}$ all of which lie in $L'$. However, the common neighborhood of $L'$, in $G$, has at least $\left(1 - \frac{1}{8\kappa^2}\right)n > 2\kappa + t + 1$ vertices. Therefore, $G$ must already contain an isomorphic of $C_{2k_1, 2k_2, \ldots, 2k_t}$, a contradiction.
\end{proof}

Let $K_{a, b}^{p},$ and $K_{a, b}^{m}$ denote the graphs obtained from the complete bipartite graph $K_{a, b}$ by adding into the part of size $b$ 
a path $P_3$ on $3$ vertices, and a matching with two edges $K_2 \cup K_2$, respectively. So, $K_{a, b}^{p} = a K_1 \vee ((b-3)K_1 \cup P_3)$ and $K_{a, b}^{m} = a K_1 \vee ((b-4)K_1 \cup 2K_2)$. Then we can make the following observation.
\begin{lemma}
\label{intersecting even cycles in almost bipartite graphs}
Let $2 \le k_1 \le k_2 \le \ldots \le k_t$ and $\kappa:= \sum_{i=1}^t k_i'$. If $k_t = 2$, then $K_{t, 2t + 1}^{p}$ contains the intersecting even cycle $C_{4, 4, \ldots, 4}$ consisting of $t$ intersecting $4$-cycles. If $k_t \ge 3$, then the graphs $K_{\kappa, \kappa + t + 1}^{p}$ and $K_{\kappa, \kappa + t + 1}^{m}$ both contain the intersecting even cycles $C_{2k_1, 2k_2, \ldots, 2k_t}$. 
\end{lemma}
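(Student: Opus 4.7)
The plan is to construct explicit embeddings of $C_{2k_1, 2k_2, \ldots, 2k_t}$ into each host graph. First, I will record the bipartition of $C_{2k_1, \ldots, 2k_t}$: the center class (containing $c$) has size $\kappa + 1$, the opposite class has size $\kappa + t$, and the total count $2\kappa + t + 1$ exactly matches the number of vertices in each host graph. Since the smaller part $A$ of the host has size $\kappa$ (or $t$ in the first case), which cannot accommodate the entire center class, my strategy is to place $c \in A$, embed a carefully chosen initial segment of the longest constituent cycle $C_{2k_t}$ using the extra edges inside $B$ to ``push'' one or two center-class vertices into $B$, and embed the remaining constituent cycles via the standard bipartite map (center-class in $A$, non-center-class in $B$).

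For both $K^{p}$ statements (the first case $K_{t, 2t+1}^{p}$ and the $K^{p}$ part of the second case $K_{\kappa, \kappa+t+1}^{p}$), I will embed $C_{2k_t}$ as $c = v_0 \in A$, $v_1 = u_1$, $v_2 = u_2$, $v_3 = u_3$ all in $B$, and then continue with $v_4 \in A$, $v_5 \in B, \ldots, v_{2k_t-1} \in B$ alternating standardly, closing the cycle along the bipartite edge $v_{2k_t-1} v_0$. Here $v_2 = u_2$ is the single center-class vertex pushed into $B$; the edges $v_1 v_2$ and $v_2 v_3$ are supplied by the $P_3$, and every other edge of this cycle is a standard bipartite edge. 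The remaining $t-1$ constituent cycles are then embedded via the standard bipartite map, drawing their non-center center-class vertices (total $\sum_{j<t} k_j'$) from the unused slots of $A$ and their non-center-class vertices (total $\sum_{j<t} k_j$) from the unused slots of $B$.

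For the $K_{\kappa, \kappa + t + 1}^{m}$ statement (which requires $k_t \ge 3$), I will use both matching edges, say $u_1 u_2$ and $u_3 u_4$, to route five extra vertices of $C_{2k_t}$ through $B$: $v_0 = c \in A$, $v_1 = u_1 \in B$, $v_2 = u_2 \in B$, $v_3 \in A$, $v_4 = u_4 \in B$, $v_5 = u_3 \in B$, and then continue $v_6 \in A$, $v_7 \in B, \ldots, v_{2k_t - 1} \in B$ alternating standardly and closing at $v_{2k_t-1} v_0$. The edges $v_1 v_2$ and $v_4 v_5$ use the matching, while $v_2 v_3$ and $v_3 v_4$ are bipartite. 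This pushes two center-class vertices ($v_2, v_4$) into $B$ and one non-center-class vertex ($v_3$) into $A$; the remaining cycles are embedded standardly. The hypothesis $k_t \ge 3$ is used precisely to ensure that $C_{2k_t}$ has at least six vertices, so that $v_0, v_1, \ldots, v_5$ are all distinct.

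The main obstacle I anticipate is the bookkeeping: I need to verify that the cardinalities $|A| = \kappa$ and $|B| = \kappa + t + 1$ are exhausted exactly by the embedding, and that distinct constituent cycles share only the central vertex $c$. Both points reduce to short arithmetic: only cycle $t$ interacts with the $P_3$ or matching vertices, and the other cycles use fresh vertices drawn from the complement in $A$ and $B$, whose cardinalities $\sum_{j<t} k_j'$ and $\sum_{j<t} k_j$ respectively agree with the standard bipartite embedding requirements.
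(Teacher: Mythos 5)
Your construction is correct: the paper states this lemma as an observation without proof, and your explicit embeddings supply exactly the routine verification it leaves implicit. The vertex counts check out in all three cases ($A$ uses $1+(k_t-2)+\sum_{i<t}k_i'=\kappa$, respectively $t$, and $B$ uses $(k_t+1)+\sum_{i<t}k_i=\kappa+t+1$, respectively $2t+1$), the within-$B$ edges you need are precisely the $P_3$ or matching edges, and $k_t\ge 3$ is indeed what guarantees $v_0,\ldots,v_5$ are distinct in the matching case.
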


It follows from Lemma \ref{intersecting even cycles in almost bipartite graphs} that if $k_t = 2$ then every vertex in $G[R]$ has degree less than 2. Further, if $k_t \ge 3$ then $e(R) \le 1$. Moreover, any vertex $v \in \mathcal{E}$ is adjacent to at most $\kappa + t - 1$ vertices in $R$, else $K_{\kappa +1, \kappa + t} \subset G[L' \cup \{v\}, R]$, a contradiction by Lemma \ref{intersectiong circuits in large complete bipartite graphs}. Finally, any vertex in $\mathcal{E}$ is adjacent to at most $\kappa-1$ vertices in $L'$ by the definition of $\mathcal{E}$.
We are now ready to show that $\mathcal{E}$ is empty and therefore $S = R$, so $G$ must contain the complete bipartite graph $K_{\kappa, n-\kappa}$.

\begin{lemma}
\label{E empty set}
The set $\mathcal{E}$ is empty and $G$ contains the complete bipartite graph $K_{\kappa, n- \kappa}$.
\end{lemma}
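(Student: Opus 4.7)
The plan is a vertex-switching argument by contradiction. Suppose some $v \in \mathcal{E}$ exists, and let $G^{\ast}$ be obtained from $G$ by deleting every edge incident to $v$ and then inserting the edges $\{vu : u \in L'\}$. The goal is to show that $G^{\ast}$ is $C_{2k_1,\ldots,2k_t}$-free while $\lambda(G^{\ast}) > \lambda(G)$, contradicting $G \in \rSPEX(n, C_{2k_1,\ldots,2k_t})$ and forcing $\mathcal{E} = \emptyset$.

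First, $G^{\ast}$ is forbidden-free: any copy of $C_{2k_1,\ldots,2k_t}$ in $G^{\ast}$ must contain $v$ (as $G - v = G^{\ast} - v$), and $N_{G^{\ast}}(v) = L'$. Because $|R| \ge (1 - 1/(8\kappa^{2}))n$ exceeds $|V(C_{2k_1,\ldots,2k_t})| = 2\kappa + t + 1$ for $n$ large, I can pick $v' \in R$ outside this copy; since $v'$ is already adjacent to every vertex of $L'$ in $G$, swapping $v$ for $v'$ exhibits the forbidden subgraph in $G$, a contradiction. Second, by the Rayleigh principle,
\[
\rx^{T}A(G^{\ast})\rx - \rx^{T}A(G)\rx \;=\; 2\rx_{v}\Bigl[\sum_{u \in L'\setminus N_{G}(v)} \rx_{u} \;-\; \sum_{w \in N_{G}(v)\setminus L'} \rx_{w}\Bigr],
\]
and the first sum is at least $1 - 1/(16\kappa^{3})$, since $|L'\setminus N_{G}(v)| \ge 1$ (as $v \in \mathcal{E}$) and each $\rx_{u} \ge 1 - 1/(16\kappa^{3})$ for $u \in L'$ by Lemma \ref{precise size of L}.

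The main obstacle is showing the second sum is strictly smaller than $1 - 1/(16\kappa^{3})$. I split $N_{G}(v) \setminus L' = (N_{G}(v) \cap R) \cup (N_{G}(v) \cap \mathcal{E})$. The $R$-part has at most $\kappa + t - 1$ vertices (by the observation just before the statement); iterating the eigenvalue equation at each $w \in R$, and using that its Perron mass is dominated by its $L'$-neighbors with total weight at most $\kappa$, gives $\rx_{w} = O(\kappa/\lambda) = O(1/\sqrt{n})$, making the $R$-contribution $o(1)$. For the $\mathcal{E}$-contribution, the pointwise bound $\rx_{w} < \eta$ is too coarse; I would either bootstrap the eigenvalue equation with the bound $|\mathcal{E}| \le n/(8\kappa^{2})$, or invoke Lemma \ref{minimum eigenweight in the neighborhood of a vertex} applied to $v$ together with $|N_{G}(v) \cap L'| \le \kappa - 1$ to force $v$ to have many neighbors in $\mathcal{E}$ and then use an Erd\H{o}s--Gallai-type long path inside $G[\mathcal{E}]$ closed via $L'$ to produce $C_{2k_1,\ldots,2k_t}$ directly in $G$. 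Either route yields the desired strict inequality and thus $\lambda(G^{\ast}) > \lambda(G)$, contradicting extremality. Hence $\mathcal{E} = \emptyset$; since $V(G) = L' \cup R$ and every vertex of $R$ is adjacent to every vertex of $L'$ by the definition of $R$, the complete bipartite graph $K_{\kappa, n-\kappa}$ is contained in $G$.
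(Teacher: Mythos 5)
There is a genuine gap: the heart of the lemma is exactly the step you leave as a sketch. Your switching argument needs $\sum_{w \in N_G(v)\setminus L'} \rx_w < \sum_{u \in L'\setminus N_G(v)} \rx_u$, and you correctly reduce this to controlling the $\mathcal{E}$-contribution, but you never actually bound it. Pointwise bounds cannot do it: for $v \in \mathcal{E}$ the only a priori upper bound on $\sum_{u\sim v,\, u\in\mathcal{E}}\rx_u$ is $\lambda \rx_v \le \lambda\eta = \Theta(\sqrt{n})$, and indeed the paper's computation shows that in the hypothetical situation $\mathcal{E}\neq\emptyset$ this sum is at least $\lambda\rx_v - (\kappa-1) - (\kappa+t-1)\eta \gtrsim 4/5$, so the inequality you need is genuinely delicate and is precisely what must be proved, not assumed. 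Your two proposed fixes are not carried out, and the second is doubtful as stated: Lemma \ref{minimum eigenweight in the neighborhood of a vertex} only forces $v$ (and likewise every vertex of $\mathcal{E}$) to have a bounded number (about $4/(5\eta)$) of $\mathcal{E}$-neighbors, i.e.\ constant minimum degree in $G[\mathcal{E}]$; constant minimum degree does not by itself yield a long path usable here, and vertices of $\mathcal{E}$ need not have the adjacencies to $L'$ required to ``close'' paths into the forbidden intersecting cycle (unlike vertices of $R$). A smaller issue: your claim $\rx_w = O(1/\sqrt n)$ for $w \in R$ is unjustified, since $w$ may have many neighbors in $\mathcal{E}$ whose total Perron weight is not a priori $O(\kappa)$; this part is inessential, though, because the crude bound $(\kappa+t-1)\eta$ already suffices for the $R$-part.

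For comparison, the paper does not try to switch at a vertex of $\mathcal{E}$ at all (that switching was already spent proving Lemma \ref{minimum eigenweight in the neighborhood of a vertex}). Instead it turns the largeness of the $\mathcal{E}$-contribution into the contradiction: for every $v\in\mathcal{E}$, combining $\lambda\rx_v \ge \kappa - \frac{1}{16\kappa^2}$ with the bounds $|N(v)\cap L'|\le \kappa-1$ and $|N(v)\cap R|\le \kappa+t-1$, $\rx_r<\eta$ on $R$, gives $\sum_{u\sim v,\,u\in\mathcal{E}}\rx_u \ge \frac{4}{5\kappa}\lambda\rx_v$. Applying Lemma \ref{lower bound for spectral radius of nonnegative matrices} to $B=A(G[\mathcal{E}])$ with the restricted Perron vector yields $\lambda(G[\mathcal{E}]) \ge \frac{4}{5\kappa}\lambda \ge \frac45\sqrt{n/\kappa}$, while $G[\mathcal{E}]$ is $C_{2k_1,\ldots,2k_t}$-free on at most $n/(8\kappa^2)$ vertices, so the upper bound of Lemma \ref{bounds on lambda} gives $\lambda(G[\mathcal{E}]) \le \sqrt{5\kappa|\mathcal{E}|} \le \sqrt{5n/(8\kappa)} < \frac45\sqrt{n/\kappa}$, a contradiction. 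Unless you can supply a genuine proof of your missing inequality, your argument does not establish the lemma; the final deduction of $K_{\kappa,n-\kappa}$ from $\mathcal{E}=\emptyset$, and your verification that the switched graph is $C_{2k_1,\ldots,2k_t}$-free, are fine.
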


\begin{proof}
Assume to the contrary that $\mathcal{E} \neq \emptyset$. Recall that any vertex $r \in R$ satisfies $\mathbf{\mathrm{x}}_r < \eta$. Therefore, any vertex $v \in \mathcal{E}$ must satisfy
\[\sum_{u \sim v} \mathbf{\mathrm{x}}_u = \lambda \mathbf{\mathrm{x}}_v = \sum_{\substack{u \sim v \\ u \in L' \cup R}}\mathbf{\mathrm{x}}_u + \sum_{\substack{u \sim v \\ u \in \mathcal{E}}}\mathbf{\mathrm{x}}_u \le \kappa -1 + \left(\kappa + t -1\right)\eta +\sum_{\substack{u \sim v \\ u \in \mathcal{E}}}\mathbf{\mathrm{x}}_u.\]
Combining this with Lemma \ref{minimum eigenweight in the neighborhood of a vertex} gives 
\begin{equation}
\begin{split}
    \frac{\displaystyle\sum_{\substack{u \sim v \\ u \in \mathcal{E}}} \mathbf{\mathrm{x}}_u}{\lambda \mathbf{\mathrm{x}}_v} \geq \frac{\lambda \mathbf{\mathrm{x}}_v - (\kappa -1)-(\kappa + t - 1)\eta}{\lambda \mathbf{\mathrm{x}}_v} \ge 1 - \dfrac{(\kappa-1) + (\kappa + t - 1) \eta}{\kappa - \frac{1}{16 \kappa^2}} \ge \frac{4}{5\kappa},
\end{split}
\end{equation}
where the last inequality follows from \eqref{choice of constants}. Now consider the matrix $B = A(G[\mathcal{E}])$ and vector $y := \mathbf{\mathrm{x}}_{|_\mathcal{E}}$ (the restriction of the vector $\mathbf{\mathrm{x}}$ to the set $\mathcal{E}$). We see that for any vertex $v \in \mathcal{E}$

\[B \mathbf{\mathrm{y}}_v = \sum_{\substack{u \sim v \\ u \in \mathcal{E}}}  \mathbf{\mathrm{x}}_u \ge \frac{4}{5\kappa} \lambda \mathbf{\mathrm{x}}_v = \frac{4}{5\kappa} \lambda \mathbf{\mathrm{y}}_v.\]

Hence, by Lemma \ref{lower bound for spectral radius of nonnegative matrices}, we have that $\lambda(B) \geq \frac{4}{5\kappa}\lambda \ge \frac{4}{5}\sqrt{\frac{n}{\kappa}}$. Moreover, $\frac{n}{8\kappa^2} \ge |\mathcal{E}| > \lambda(B) \ge \frac{4}{5}\sqrt{\frac{n}{\kappa}}$, so $\mathcal{E}$ must have sufficiently many vertices to apply Lemma \ref{bounds on lambda} if $n$ is sufficiently large and $|\mathcal{E}| \neq 0$.  This is a contradiction to Lemma \ref{bounds on lambda} which gives $\lambda(B) \le \sqrt{5\kappa|\mathcal{E}|} \le \sqrt{5\kappa\frac{n}{8\kappa^2}} = \sqrt{\frac{5n}{8\kappa}}$, else $\mathcal{E}$ contains $C_{2k_1, 2k_2, \ldots, 2k_t}$. Therefore, $\mathcal{E}$ must be empty.
\end{proof}

\section{Proof of Theorems \ref{theorem intersecting even cycles} and \ref{theorem intersecting 4-cycles}}\label{section main proof}

It follows from Lemma \ref{E empty set} that $G$ contains the complete bipartite graph $K_{\kappa, n-\kappa}$, where the part on $\kappa$ vertices is the set $L'$ and the part on $n-\kappa$ vertices is the set $R$. By Lemma \ref{intersecting even cycles in almost bipartite graphs}, if $k_t = 2$, then $G[R] \subset M_{n-\kappa} = M_{n-t}$, so $G \subseteq F_{n, t}$, and if $k_t \ge 3$, then
$e(R) \leq 1$ in $G$. Hence $G \subset S_{n,\kappa}^+$. Since adding more edges will only increase the spectral radius, we see that the spectral radius is maximized if the vertices of $L'$ induce a clique $K_{\kappa}$ and the number of edges in $R$ is as large as possible. Thus $G \cong F_{n, t}$ (and $G \cong S_{n,\kappa}^+)$, for $k_t=2$ (and $k_t \ge 3$, respectively).
\qed

\section{Applications to related minor-free spectral Tur\' an problems}\label{section minor free}

By Theorem \ref{theorem intersecting 4-cycles} we have established that $F_{n, t}$ is the unique spectral extremal graph in $\rSPEX(n, C_{4, 4, \ldots, 4})$, for $n$ sufficiently large. This answers Problem 7.1 of \cite{he2023spectral}. Furthermore, in \cite{he2023spectral}, He, Li and Feng studied the spectral Tur\' an problem for $H$-minor-free graphs. 
For two graphs $H$ and $G$, we say that $H$ is a minor of $G$ if we can obtain a graph isomorphic to $H$ from $G$ by performing a sequence of operations consisting of vertex deletions, edge deletions or edge contractions to $G$. If no such sequence of operations can yield an isomorphic copy of $H$, we say that $G$ is $H$-minor-free.
For some choice of $k_1, k_2,\ldots, k_t$, the minor-free spectral Tur\' an problem for $C_{k_1, k_2, \ldots, k_t}$ involves determining the $n$-vertex, $C_{k_1, k_2, \ldots, k_t}$-minor-free graphs that achieve maximum spectral radius. For ease of communication, we will refer to such a graph as a \textit{minor-free spectral extremal graph}.  
%In \cite{he2023spectral}, the authors determined that for $n$ sufficiently large, $S_{n, t}$ ($F_{n,t}$) is the unique graph with maximum spectral radius among all $C_{3, 3, \ldots, 3}$-minor-free ($C_{4, 4, \ldots, 4}$-minor-free) graphs on $n$ vertices, where $C_{3, 3, \ldots, 3}$ denotes the friendship graph consisting of $t$ triangles ($C_{4, 4, \ldots, 4}$ is the intersecting even cycle consisting of $t$ $4$-cycles). 
In \cite{he2023spectral}, for $n$ sufficiently large, the authors determined that for $C_{k_1, k_2, \ldots, k_t} = C_{3, 3, \ldots, 3}$ (or $C_{4, 4, \ldots, 4}$), the graph $S_{n, t}$ (or $F_{n, t}$, respectively) is the unique $C_{k_1, k_2, \ldots, k_t}$-minor-free spectral extremal graph on $n$ vertices. 

In Theorem \ref{theorem intersecting even cycles}, we have established that $S_{n, \kappa}^+$ is the unique spectral extremal graph in $\rSPEX(n, C_{2k_1, 2k_2, \ldots, 2k_t})$ for any choice of $k_i, 1 \le i \le t$, where $k_t \ge 3$, whenever $n$ is taken to be sufficiently large. We state the following two remarks which when combined with Theorem \ref{theorem intersecting even cycles} give us a general minor-free spectral Tur\' an result for intersecting even cycles.

\begin{remark}
\label{remark spectral extremal larger than minor-free spectral extremal}
   If $G$ is an $H$-minor-free graph, then $G$ does not contain any subgraph isomorphic to $H$. Therefore, for $n$ large enough, if $G$ is a $C_{2k_1, 2k_2, \ldots, 2k_t}$-minor-free spectral extremal graph on $n$ vertices and $k_t \ge 3$, then $\lambda(G) \le \lambda(S_{n, \kappa}^+)$. Similarly, if $k_t = 2$ then $\lambda(G) \le \lambda(F_{n, t})$.
\end{remark}
\begin{remark}
\label{Sn,k+ is also minor-free}
   For $k_t \ge 3$, the graph $S_{n, \kappa}^+$ is $C_{2k_1, 2k_2, \ldots, 2k_t}$-minor free. This is true because $S_{n, \kappa}^+$ is $C_{2k_1, 2k_2, \ldots, 2k_t}$-free and any sequence of vertex deletions, edge deletions and edge contractions to $S_{n, \kappa}^+$ only yield a subgraph of $S_{n, \kappa}^+$. Similarly, $F_{n, t}$ is $C_{4, 4, \ldots, 4}$-minor-free, where $C_{4, 4, \ldots, 4}$ consists of $t$ intersecting $4$-cycles.
\end{remark}

We are now ready to state our theorem for minor-free spectral Tur\' an problems on intersecting even cycles. Note that the result for $C_{4, 4, \ldots, 4}$ already follows from \cite{he2023spectral} and is combined into the following theorem for completion.

\begin{theorem}
\label{theorem minor free}
    For integers $2 \le k_1 \le k_2 \le \ldots k_t$, let $G$ be a $C_{2k_1, 2k_2, \ldots, 2k_t}$-minor-free spectral extremal graph on $n$ vertices. If $n$ is sufficiently large, we have $\rSPEX(n, C_{2k_1, 2k_2, \ldots, 2k_t}) =  \{G\}$. In particular,
    \[G = F_{n, t} \hspace{0.25cm} \text{ for } k_t = 2, \text{ and }\]
    \[G = S_{n, \kappa}^+ \hspace{0.25cm} \text{ for } k_t \ge 3.\]

\end{theorem}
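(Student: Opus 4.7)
The plan is to obtain Theorem \ref{theorem minor free} as an almost immediate corollary of Theorems \ref{theorem intersecting even cycles} and \ref{theorem intersecting 4-cycles}, together with Remarks \ref{remark spectral extremal larger than minor-free spectral extremal} and \ref{Sn,k+ is also minor-free}. The key observation is that being $C_{2k_1, 2k_2, \ldots, 2k_t}$-minor-free is a strictly stronger condition than being $C_{2k_1, 2k_2, \ldots, 2k_t}$-subgraph-free, so the upper bound on the spectral radius proved in the subgraph setting automatically transfers to the minor-free setting; and conversely, the candidate extremal graphs $S_{n, \kappa}^+$ (when $k_t \ge 3$) and $F_{n, t}$ (when $k_t = 2$) already lie in the smaller minor-free class, as recorded in Remark \ref{Sn,k+ is also minor-free}.

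Concretely, I would let $G$ be a $C_{2k_1, 2k_2, \ldots, 2k_t}$-minor-free graph on $n$ vertices that attains the maximum spectral radius in its class, with $n$ taken large enough so that the hypotheses of Theorems \ref{theorem intersecting even cycles} and \ref{theorem intersecting 4-cycles} apply. By Remark \ref{remark spectral extremal larger than minor-free spectral extremal}, $G$ cannot contain $C_{2k_1, 2k_2, \ldots, 2k_t}$ as a subgraph either, so Theorem \ref{theorem intersecting even cycles} (when $k_t \ge 3$) or Theorem \ref{theorem intersecting 4-cycles} (when $k_t = 2$) applies and yields the upper bound $\lambda(G) \le \lambda(S_{n, \kappa}^+)$ or $\lambda(G) \le \lambda(F_{n, t})$ respectively, with equality forcing $G$ to be isomorphic to the corresponding graph. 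Since Remark \ref{Sn,k+ is also minor-free} tells us that $S_{n, \kappa}^+$ and $F_{n, t}$ are themselves $C_{2k_1, 2k_2, \ldots, 2k_t}$-minor-free, they are legitimate competitors in the minor-free class, and hence $\lambda(G)$ must match the relevant spectral radius with $G$ isomorphic to $S_{n, \kappa}^+$ or $F_{n, t}$ by uniqueness.

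There is essentially no technical obstacle: the whole argument is a sandwich between an upper bound coming from the subgraph-free spectral Tur\'an result of this paper and a matching lower bound furnished by the fact that the candidate extremal graph already satisfies the stronger minor-free condition. The only thing to check externally is the minor-freeness asserted in Remark \ref{Sn,k+ is also minor-free}, which follows from the simple join structure of $S_{n, \kappa}^+$ and $F_{n, t}$: any minor obtained by deletions and contractions is still contained in the original graph, and neither contains $C_{2k_1, 2k_2, \ldots, 2k_t}$ as a subgraph. As the authors note, the case $k_t = 2$ was already settled in \cite{he2023spectral}, so the genuinely new contribution here is the range $k_t \ge 3$.
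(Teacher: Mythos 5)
Your proposal is correct and follows essentially the same route as the paper: combine the subgraph-free spectral Tur\'an results (Theorems \ref{theorem intersecting even cycles} and \ref{theorem intersecting 4-cycles}) with Remarks \ref{remark spectral extremal larger than minor-free spectral extremal} and \ref{Sn,k+ is also minor-free} to sandwich the minor-free extremal graph between the upper bound and the fact that $S_{n,\kappa}^+$ (resp.\ $F_{n,t}$) is itself $C_{2k_1, 2k_2, \ldots, 2k_t}$-minor-free. This matches the paper's intended argument, which is stated exactly via those two remarks.
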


We end by drawing attention to a question that appears in \cite{he2023spectral}.

\begin{question}
    Determine the graphs $H$ for which $\rSPEX(n, H) = \textrm{m-SPEX}(n,H)$, where $\textrm{m-SPEX}(n, H)$ denotes the set of $H$-minor-free spectral extremal graphs on $n$ vertices.  
\end{question} 

Theorem \ref{theorem minor free} gives us that for $H = C_{2k_1, 2k_2, \ldots, 2k_t}$, we have $\rSPEX(n, H) = \textrm{m-SPEX}(n,H)$.
Indeed, it is also true that if $G \cong K_a \vee \left(K_{b_1} \cup K_{b_2} \cup \ldots \cup K_{b_s}\right)$ and $G \in \rSPEX(n, H)$ for some graph $H$ (so $n = a + \left(\sum_{i=1}^s b_i \right)$), then $G$ must also be an $H$-minor-free spectral extremal graph. 
To see this, observe that we have assumed that $G$ has no subgraph isomorphic to $H$. Further, due to the structure of $G$, any minor of $G$ is isomorphic to a subgraph of $G$. Thus $G$ is $H$-minor-free. Since any $H$-minor-free graph is also $H$-free, this implies that $G$ must be an $H$-minor-free spectral extremal graph and any other minor-free spectral extremal graph of order $n$ is also a spectral extremal graph.

\vspace{0.5cm}
\textbf{Acknowledgements:} The author is grateful to Prof.\ Sebastian Cioab\u a and Prof.\ Michael Tait for their useful advice during the preparation of this paper. We also thank Dr.\ Yongtao Li for bringing to attention \cite{he2023spectral} and for productive discussions on the same. Also, we show our appreciation to the referees for numerous careful comments and corrections that have improved this exposition. 

\bibliographystyle{plain}
	\bibliography{bib.bib}
\end{document}